\def\NZQ{\mathbf}               
\def\NN{{\NZQ N}}
\def\QQ{{\NZQ Q}}
\def\ZZ{{\NZQ Z}}
\def\RR{{\NZQ R}}
\def\AA{{\NZQ A}}
\newtheorem{Theorem}{Theorem}[section]
\newtheorem{Lemma}[Theorem]{Lemma}
\newtheorem{Corollary}[Theorem]{Corollary}
\newtheorem{Proposition}[Theorem]{Proposition}
\newtheorem{Remark}[Theorem]{Remark}
\newtheorem{Example}[Theorem]{Example}
\let\epsilon\varepsilon
\let\phi=\varphi
\let\kappa=\varkappa
\def \a {\alpha}
\begin{document}

\title[Generating sequences]{On the construction of valuations and generating sequences on hypersurface singularities}
\author[S. D. Cutkosky, H. Mourtada, B. Teissier]{Steven Dale Cutkosky, Hussein Mourtada and Bernard Teissier}
\address{SDC: Department of Mathematics,
University of Missouri, Columbia, MO 65211, USA}
\email{cutkoskys@missouri.edu}
\address{HM: IMJ-PRG, CNRS, Universit\'e Paris Diderot, Institut de Math\'ematiques de Jussieu-Paris Rive Gauche, B\^at. Sophie Germain, Place Aur\'elie Nemours, F-75013, Paris, France.}
\email{hussein.mourtada@imj-prg.fr}
\address{BT: IMJ-PRG, CNRS, Universit\'e Paris Diderot, Institut de Math\'ematiques de Jussieu-Paris Rive Gauche, B\^at. Sophie Germain, Place Aur\'elie Nemours, F-75013, Paris, France.}
\email{bernard.teissier@imj-prg.fr}
\keywords{Valuations, ramification, defect, generating sequences, key polynomials}
\thanks{The first author was partially supported by NSF grant DMS-1700046. He also thanks the Universit\'e Paris Diderot and the Fondation Sciences Math\'ematiques de Paris. The second author was partially supported by The Miller scholar in residence program of the University of Missouri at Columbia.}

\pagestyle{myheadings}
\markboth{S.D. Cutkosky, H. Mourtada, B. Teissier]}{\rm Extension of valuations}
\renewcommand\rightmark{S.D. Cutkosky, H. Mourtada, B. Teissier}
\renewcommand\leftmark{Generating sequences on hypersurfaces}

\subjclass[2000]{12J20, 16W60, 14B25}
\begin{abstract}Suppose that $(K,\nu)$ is a valued field, $f(z)\in K[z]$ is a unitary and irreducible polynomial and $(L,\omega)$ is an extension of valued fields, where $L=K[z]/(f(z))$. The description of these extensions is a classical subject. We deal here with the more delicate situation where $A$ is a local domain with quotient field $K$ dominated by the valuation ring of $\nu$ and $f(z)$ is in $A[z]$, and we want to describe the extensions $\omega$ of $\nu$ to $A[z]/(f(z))$. A motivation is the problem of local uniformization in positive characteristic: assuming that the valuation $\nu$ on $A$ can be uniformized, when can $\omega$ on $A[z]/(f(z))$ also be uniformized?\par

In recent years it has appeared that this problem is closely connected to the description of the structure of the associated graded ring ${\rm gr}_\omega A[z]/(f(z))$ of $A[z]/(f(z))$ for the filtration defined by $\omega$ as an extension of the associated graded ring of $A$ for the filtration defined by $\nu$. In important special cases this description reduces the extension of local uniformization to embedded resolution of singularities of toric varieties, which is already known. This paper is devoted to this description. In particular we give an algorithm which in many cases produces a finite set of elements of $A[z]/(f(z))$ whose images in ${\rm gr}_\omega A[z]/(f(z))$ generate it as a ${\rm gr}_\nu A$-algebra as well as the relations between these images. We also work out the interactions of our method with phenomena which complicate the study of ramification and local uniformization in positive characteristic, such as the non tameness and the defect of an extension. For a valuation $\nu$ of rank one and a separable extension of valued fields $(K,\nu)\subset (L,\omega)$ as above our algorithm produces a generating sequence in a local birational extension $A_1$ of $A$ dominated by $\nu$ if and only if there is no defect. In this case, ${\rm gr}_\omega A_1[z]/(f(z))$ is a finitely presented ${\rm gr}_\nu A_1$-module.
\end{abstract}

\maketitle
\section{Introduction}
Given a field $K$ and an extension $L$ of $K$, the study of the problem of extending a valuation from $K$ to $L$ has a long history motivated in part by its close relation with ramification theory, whether in number theory or in algebraic geometry. It has an incarnation in logic, the model theory of valued fields which provides another viewpoint on ramification theory. After fundamental work by E. Artin, H. Hasse, A. Ostrowski and others, S. MacLane created a method for describing all extensions of a discrete rank one valuation on a field $K$ to a primitive extension $K(z)$, be it algebraic or transcendental. The method is based on the existence of \textit{key polynomials} in $K[z]$ which provide successive approximations of a given extension of the valuation and, by the behavior of their degrees, a measure of its complexity.\par
On the side of algebraic geometry, Zariski's approach to resolution of singularities of algebraic varieties using local uniformization of valuations provides a strong motivation for the study of valuations on local domains essentially of finite type over a field, which waned after Hironaka's proof of resolution in characteristic zero but later revived as an approach to resolution in positive characteristic.\par
 In the 1970's and 1980's appeared (see \cite{T1}, \cite{Sp}, \cite{GT}) the idea that the associated graded ring  ${\rm gr}_\nu A$ of a local domain $A$ with respect to the filtration of $A$ associated to a valuation $\nu$ of its field of fractions centered in $A$ (non negative on $A$ and positive on its maximal ideal) encoded in a geometric way essential characters of the valuation and could be used at least in special cases to obtain local uniformization. For example, representatives in $A$ of the generators of the graded algebra associated to the unique valuation of a one dimensional integral complex analytic algebra can be used to embed the corresponding curve in an affine space where a single birational toric modification provides an embedded resolution of singularities (see \cite{GT}). It also became apparent that some of MacLane's essential definitions are better understood using associated graded rings.\par
Somewhat later, MacLane's theory was generalized by Vaqui\'e who extended to all Krull valuations the construction of sequences of key polynomials, now indexed by totally ordered sets \textit(see \cite{V1}, \cite{V2}, \cite{V3} and section \ref{SecV} below). He also described the extension ${\rm gr}_\nu K\subset {\rm gr}_\omega K[z]$ of graded rings corresponding to an extension of valuations from $\nu$ on $K$ to $\omega$ on $K[z]$, for $z$ algebraic or transcendental over $K$. It appeared that the images of MacLane's and Vaqui\'e's key polynomials in the graded algebra ${\rm gr}_\omega K[z]$ were related to its generation as a ${\rm gr}_\nu K$-algebra. \par\noindent In the last three decades or so the problem of describing a \textit{generating sequence} for a valuation, which is a set of elements of a ring $A$ whose images in ${\rm gr}_\nu A$ provide a presentation by generators and relations has become of major interest for the ramification theory of extensions of valued fields as well as for local uniformization in positive characteristic, which is still an open problem.\par\noindent
In fact it has become apparent that given an extension $(A,\nu)\subset (B,\omega)$ of valued \textit{rings} the extension ${\rm gr}_\nu A\subset {\rm gr}_\omega B$ of the associated graded algebras, as well as the similar extensions obtained after birational extensions of $A$ and $B$ encodes in a comparatively simple language, such as the condition of being finitely generated, essential information about the ramification of the original extension. This concerns especially the defect and the possibility to uniformize $\omega$ on $B$ if we can uniformize $\nu$ on $A$. But we can access this information only if we have descriptions by generators and relations of ${\rm gr}_\nu A$ and $ {\rm gr}_\omega B$, or of  ${\rm gr}_\omega B$ as a ${\rm gr}_\nu A$-algebra. This is the main motivation for this work.\footnote{We think that the problem of constructing generating sequences in a Noetherian local domain $A$ which is dominated by a valuation $\nu$ is very difficult, and little is known about it in general. The difficulty reflects the fact that the structure of the semigroup of values $S^A(\nu)=\nu(A\setminus \{0\})$ is closely related to some of the birational maps providing embedded local uniformizations of $\nu$ and can be extremely complicated. It is well understood in the case that $A$ has dimension one (see \cite{T1}, \cite {GT}), and for regular local rings of dimension two (\cite{Sp}, \cite{CV1}, \cite{Mo}). It is known for certain valuations dominating two dimensional quotient singularities \cite{D} and for certain valuations dominating three dimensional regular local rings \cite{Kas}. }\par

Here we consider the case where the essence of the difficulty resides: suppose that $(K,\nu)$ is a valued field, $f(z)\in K[z]$ is a unitary and irreducible polynomial and $(L,\omega)$ is a finite field extension, where $L=K[z]/(f(z))$. Further suppose that $A$ is a local domain with quotient field $K$ such that $\nu$  dominates $A$ and that $f(z)$ is in $A[z]$. We provide an algorithm producing the first significant part of a generating sequence for extensions of a valuation $\nu$ to $A[z]/(f(z))$.\par\noindent
 The valuations $\nu$ and $\omega$ also induce filtrations of $K$ and $K[z]/(f(z))$ respectively and the associated graded ring of $K[z]/(f(z))$ along $\omega$ as an extension of the associated graded ring of $K$ along $\nu$ has been constructed implicitly, in the papers \cite{M1}, \cite{M2} of MacLane 
for discrete rank one valuations, and for general valuations by Vaqui\'e in \cite{V1}, \cite{V2}, \cite{V3}.  Further papers on this topic, and comparison with the method of pseudo convergent sequences (introduced by Ostrowski in \cite[Teil III, \S 11]{O} and developed by Kaplansky in \cite{Ka}) are \cite{RB}, \cite{NS}, \cite{Sa}, \cite{HMOS} and \cite{DMS}. Finding generating sequences for $A[z]/(f(z))$ in the case where $A$ is no longer a field but an arbitrary noetherian subring dominated by $R_\nu$ and with the same field of fractions is much more closely related to resolution of singularities via local uniformization and correspondingly more difficult.\par\noindent
This paper is devoted to this problem. We describe the relationship of our method with the key polynomials of MacLane and Vaqui\'e. We also work out the interactions of our method of computation with phenomena which complicate the study of ramification in positive characteristic, such as the lack of tameness and the defect of an extension.\par\medskip\noindent
We now give more details about the content of this paper:\par\noindent
Let $G_{\nu}$ be the value group of $\nu$ and $R_{\nu}$ be the valuation ring of $\nu$, with maximal ideal $m_{\nu}$. Given a subring $A$ of the field of fractions of $R_\nu$, the associated graded ring of $A$ along $\nu$ is defined as 
$$
{\rm gr}_{\nu}(A)=\bigoplus_{\gamma\in G_{\nu}}\mathcal P_{\gamma}(A)/\mathcal P_{\gamma}^+(A)
$$
where 
$$
\mathcal P_{\gamma}(A)=\{g\in A\setminus \{0\}\mid \nu(g)\ge\gamma\}\mbox{ and }
\mathcal P_{\gamma}^+(A)=\{g\in A\setminus \{0\}\mid \nu(g)>\gamma\}.
$$
The ring ${\rm gr}_{\nu}(A)$ is an algebra over its degree zero subring. It is a domain which is generally not Noetherian. In this text we shall consider subrings of $R_\nu$ so that the semigroup $S^A(\nu)$ of values of elements of $A\setminus\{0\}$ which indexes the homogeneous components of ${\rm gr}_{\nu}(A)$ is contained in the positive part of $G_\nu$. We shall see more about this semigroup below. \par\noindent 
Important invariants of a finite extension $(K,\nu)\subset (L,\omega)$ of valued fields are the reduced ramification index and residue degree of $\omega$ over $\nu$, which are
$$
e(\omega/\nu)=[G_{\omega}:G_{\nu}]\mbox{ and }f(\omega/\nu)=[R_{\omega}/m_{\omega}:R_{\nu}/m_{\nu}].
$$
Another, very subtle invariant is the defect $\delta(\omega/\nu)$ of the extension, which is a power of the characteristic $p$ of the residue field $R_{\nu}/m_{\nu}$.  The defect and its role in local uniformization are explained in \cite{K1}. We give the definition of   the defect  in (\ref{eqefd}) below. In the case where $\omega$ is the unique extension of $\nu$ to $L$ we have that
\begin{equation}\label{eqN300}
[L:K]=e(\omega/\nu)f(\omega/\nu)\delta(\omega/\nu).
\end{equation}
If $A$ and $B$ are local  domains with quotient fields $K$ and $L$ such that $\omega$ dominates $B$ and $B$ dominates $A$, we have a graded inclusion of graded domains
$$
{\rm gr}_{\nu}(A)\rightarrow {\rm gr}_{\omega}(B).
$$
The index of quotient fields is:
$$
[{\rm QF}({\rm gr}_{\omega}(B)):{\rm QF}({\rm gr}_{\nu}(A))]=e(\omega/\nu)f(\omega/\nu)
$$
by Proposition 3.3  of \cite{C4}.  The defect seems to disappear, but it manifests itself in mysterious behavior in the extensions of associated graded rings of injections $A'\rightarrow B'$ of birational extensions of Noetherian local domains $A, B$. For instance, if $\nu$ has rational rank 1 but is not discrete, the defect $\delta(\omega/\nu)$ is larger than 1 and $A$ and $B$ are two dimensional excellent local domains, then ${\rm gr}_{\omega}(B')$ is not a finitely generated ${\rm gr}_{\nu}(A')$-algebra for any regular local rings $A'\rightarrow B'$ which are dominated by $\omega$ and dominate $A$ and $B$ as shown in \cite{C3}.\par
The  construction of  generating sequences is closely related to the problem of local uniformization. In \cite[Theorem 7.1]{CM}, it is shown how reduction of multiplicity along a rank 1 valuation can be achieved in a defectless  extension $A\rightarrow A[z]/(f(z))$. 
A similar statement is proven by San Saturnino in \cite{Sa}.

The statement ``defectless'' means that the rank 1 valuations $\nu$ and $\omega$ satisfy $\delta(\omega/\nu)=1$.
From this assumption, it follows that either
$\omega(z-K)$ has a largest element, or the limsup of this set is $\infty$. If the limsup of this set is $\infty$, then in an appropriate extension, the valuation $\omega$ corresponds to a linear factor of $f(z)$, and it is not difficult to realize a reduction of multiplicity by blowing up. So assume that $\omega(z-K)$ has a largest element $\gamma\in G_{\omega}$. We then have $\gamma\not\in G_{\nu}$. After a birational extension $A_1$ of $A$ and a change of variables of $z$ in $A_1[z]$, we obtain that $\omega(z)=\gamma$ and then after a Cremona transformation involving $z$, we obtain a reduction of the multiplicity of the strict transform of $f$.

In  \cite{T2} and \cite{T3}, it is shown how  associated graded rings along a valuation can be used to prove local uniformization, at least when the associated graded rings are finitely generated algebras over $A/m_A$.  A suitable toric resolution of singularities of the associated graded ring induces a local uniformization of the given valuation.

The subring of degree zero elements of the graded ring ${\rm gr}_{\nu}(A)$ is $({\rm gr}_{\nu}(A))_0=A/Q$ where $Q$ is the prime ideal in $A$ of elements of positive value.
A generating sequence for  $\nu$ on  $A$ is an ordered set of elements of $A$ whose classes in ${\rm gr}_{\nu}(A)$ generate ${\rm gr}_{\nu}(A)$ as a graded $({\rm gr}_{\nu}(A))_0$-algebra.  To be meaningful, a generating sequence should come with a formula for computing the values of elements of $A$, and their relations in ${\rm gr}_{\nu}(A)$. In particular, a generating sequence should give the structure of ${\rm gr}_{\nu}(A)$ as a graded $({\rm gr}_{\nu}(A))_0$-algebra. 

In the case of an inclusion $A\subset B$ of domains, and an extension $\omega$ of $\nu$ to the quotient field of $B$ such that $\omega$ has nonnegative value on $B$, a generating sequence of the extension is an ordered sequence of elements of $B$ whose classes in ${\rm gr}_{\omega}(B)$ generate ${\rm gr}_{\omega}(B)$ as a ${\rm gr}_{\nu}(A)$-algebra. A generating sequence for an extension should come with a formula for computing the values of elements of $B$, relative to the values of elements of $A$, and give their relations in ${\rm gr}_{\nu}(B)$. That is, a generating sequence should give the structure of ${\rm gr}_{\omega}(B)$ as a graded ${\rm gr}_{\nu}(A)$-algebra.

In this paper, we give a very simple algorithm which allows us to compute a generating sequence and the structure of ${\rm gr}_{\omega}(A[z]/(f(z))$ in many situations. Throughout this paper, we have the assumption that $A$ is a local domain which contains an algebraically closed field $\mathbf k$ such that its residue field $A/m_A=\mathbf k$, $\nu$ dominates $A$ and  the residue field of the valuation ring $R_{\nu}$ of $\nu$ is $R_{\nu}/m_{\nu}=\mathbf k$ ($\nu$ is a ``rational valuation''). This algorithm is derived in Section \ref{Sec3}. The algorithm is valid for an arbitrary extension $\omega$ of an arbitrary valuation $\nu$ dominating $A$ ($m_{\nu}\cap A=m_A$).

A  realization of our algorithm  produces a subring of ${\rm gr}_{\omega}(R_{\nu}[z]/(f(z))$  
which is the quotient $C/I$ of a graded polynomial ring $C$ over ${\rm gr}_{\nu}(R_{\nu})$ in either finitely many or countably many variables, and a set of generators of the graded prime ideal $I$ of $C$.  Our algorithm gives an explicit representation of this subring as 
 $$
{\rm gr}_{\nu}(R_{\nu})[\overline \phi_1,\ldots,\overline \phi_k,\ldots]/
I,
$$
where
$$
I=(\overline \phi_1^{n_1}-\overline c_1,\overline \phi_2^{n_2}-\overline c_2\overline\phi_1^{j_1(2)},\ldots,\overline \phi_k^{n_k}-\overline c_k\overline\phi_1^{j_1(k)}\overline\phi_2^{j_2(k)}\cdots\overline \phi_{k-1}^{j_{k-1}(k)},\ldots)
$$
with $\overline c_1,\ldots,\overline c_k,\ldots\in {\rm gr}_{\nu}(R_{\nu})$ homogeneous elements. The  elements $\overline \phi_i$ are homogeneous with  strictly increasing values.
If our algorithm terminates in a finite number of steps $k$, then  elements $\phi_1,\ldots,\phi_k\in R_{\nu}[z]$ whose classes are $\overline\phi_1,\ldots,\overline \phi_k$ form a generating sequence of $R_{\nu}[z]/(f(z))$ over $R_{\nu}$ and we have built up the entire associated graded ring
$$
{\rm gr}_{\nu}(R_{\nu}[z]/(f(z))) = {\rm gr}_{\nu}(R_{\nu})[\overline \phi_1,\ldots,\overline \phi_k]/
I
$$
where
$$
I=(\overline \phi_1^{n_1}-\overline c_1,\overline \phi_2^{n_2}-\overline c_2\overline\phi_1^{j_1(2)},\ldots,\overline \phi_k^{n_k}-\overline c_k\overline\phi_1^{j_1(k)}\overline\phi_2^{j_2(k)}\cdots\overline \phi_{k-1}^{j_{k-1}(k)}).
$$
In this case, we have that ${\rm gr}_{\nu}(R_{\nu}[z]/(f(z))$ is a finitely generated and presented ${\rm gr}_{\nu}(R_{\nu})$-module.

 When we compare our algorithm to the theory of Vaqui\'e (\cite{V1}, \cite{V2}, \cite{V3}) in Subsection \ref{SubSecMV}, we conclude in Proposition \ref{PropVO} that a realization of our algorithm produces the ``first simple admissible family'' $\mathcal S^{(1)}$ of an ``admissible family'' $\mathcal S$ determining the valuation $\omega$. 

In the case of a noetherian local domain $A$ dominated by $R_\nu$ as above, our algorithm produces  in many situations a finite sequence of elements of $A[z]$ whose images generate the ${\rm gr}_\nu A$-algebra ${\rm gr}_\omega A[z]$. It does this even in cases where there are infinitely many key polynomials. Remarks 8.12 in \cite{T3} displays a similar phenomenon of finite generation in the presence of an infinity of key polynomials. \par
More precisely, if the characteristic  $p$ of $k$ does not divide the degree of $f$,  $A$ is a domain as above and $\omega$ is the unique extension of $\nu$ to a valuation of the quotient field $L$ of $A[z]/((f(z))$, then we show in Theorem \ref{Theorem1} that our  algorithm  produces a finite generating sequence in $A[z]/(f(z))$. 
The associated graded ring of $A[z]/(f(z))$ along  $\omega$ is then a finitely generated and presented  module over the associated graded ring of $A$ along  $\nu$.

 Since the defect $\delta(\omega/\nu)$ is always a power of $p$, the assumption that $p$ does not divide the degree of  $f$ in Theorem \ref{Theorem1} and the assumption that $\omega$ is the unique extension of $\nu$  forces the defect $\delta(\omega/\nu)$ to be  1 by (\ref{eqN300}). 
 
We show that if any of the above assumptions are removed, then the conclusions of Theorem \ref{Theorem1} do not hold (Examples of Section \ref{Sec3} and Section \ref{Sec9}).  For instance,  the assumption that $R_{\nu}[z]/(f(z))$ is a ``hypersurface singularity'' is shown to be necessary for finite generation to hold in Example \ref{IEx1}.

To illustrate the power of Theorem \ref{Theorem1}, we compute in Example \ref{Ex40} the associated graded ring when $f(z)$ is a quadratic polynomial, $k$ has characteristic not equal to 2 and $\omega$ is the unique extension of $\nu$.  It has the simple form
$$
{\rm gr}_{\omega}(A[z]/(f(z))\cong {\rm gr}_{\nu}(A)[\overline\phi]/(\overline \phi^2-\overline c)
$$
for some homogeneous $\overline c\in{\rm gr}_{\nu}(A)$.
From the classification of associated graded rings of valuations dominating a two dimensional regular local ring $A$ (\cite{Sp} and \cite{CV1})) we see that we are able to completely calculate the associated graded ring along an extended valuation in the local rings of two dimensional rational double points, when the extension $\omega$ is unique. In constrast, if $\omega$ is not the unique extension of $\nu$, then ${\rm gr}_{\omega}(A[z]/(f(z))$ might not be a finitely generated ${\rm gr}_{\nu}(A)$-module, as shown in Examples \ref{Ex40} and \ref{IEx2}.

 In Theorem \ref{Theorem4*}, we consider an arbitrary separable extension (with no assumption on the degree) and
 assume that $A$ is a Nagata local domain. We show that  an extension of a rank one valuation $\nu$ is without  defect if and only if 
 there exists a realization of our algorithm with coefficients in a birational extension $A_1$ of $A$   which constructs $\omega$, either as a valuation or a limit valuation.  A birational extension $A_1$ of $A$ is  a localization of a finitely generated $A$-algebra whose quotient field is $K$ and which is dominated by $\nu$.  
 
 An example showing that the conclusions of Theorem \ref{Theorem4*} may not hold if $\nu$ has rank larger than one is given in Section \ref{SecNEX}. In Example \ref{Ex201}, it is shown that the conclusions of Theorem \ref{Theorem4*} may not hold if $f(z)$ is not separable over $K$.

 In Section \ref{Sec8} we analyze our algorithm in a rank 1 example with defect from \cite{CP} to motivate the necessary condition of Theorem \ref{Theorem4*}. 
 We explicitly show that a generating sequence does not exist in $A_1[z]$ for any birational extension $A_1$ of $A$ which is dominated by $\nu$, and the  valuation $\omega$ is not realizable as a limit valuation; that is, $\omega$ is not realizable as a sequence of approximants, only of a collection of approximants indexed by a more general well ordered set.

 In the final section, Section \ref{Sec9}, we give examples showing that the finite generation of extensions of associated graded rings and valuation semigroups ensured by Theorem \ref{Theorem1}  may fail if any of the assumptions of the theorem are removed. 
 The semigroup $S^{A}(\nu)$ of values of $\nu$ on $A$ is
 $$
 S^{A}(\nu)=\{\nu(g)\mid g\in A\setminus \{0\}\}.
 $$

 In Example \ref{IEx1}, it is shown that there exists an extension $L$ of the quotient field $K$ of $A$  of degree prime to $p$, a valuation $\nu$ of $K$ which dominates $A$ and has a  unique extension to $L$ such that if $B$ is the integral closure of $A$ in $L$, then ${\rm gr}_{\omega}(B)$ is not a finitely generated ${\rm gr}_{\nu}(A)$-module   and the semigroup $S^{B}(\omega)$ is not a finitely generated $S^{A}(\nu)$-module.    In particular, the conclusions of Theorem \ref{Theorem1} do not hold for this extension.
  This example shows that we must have the condition that $B=A[z]/(f(z))$ is a ``hypersurface singularity'' for the conclusions of Theorem \ref{Theorem1} to be true. 
 
We make use of the theory of MacLane, \cite{M1}, \cite{M2}, which he developed to construct the extensions of a (rank 1) discrete valuation $\nu$ of $K$ to a discrete valuation  $\omega$ of $K[z]$ or of $K[z]/(f(z))$ for some irreducible unitary polynomial $f(z)\in K[z]$. Our algorithm can be viewed as a realization of MacLane's method in the context of a general valuation, in a specific, nice form. MacLane's theory is surveyed in Section \ref{Sec2}.

We also make use of Vaqui\'e's generalization of MacLane's method in \cite{V1}, \cite{V2}, \cite{V3} to construct extensions of general valuations in $K[z]$ and $K[z]/(f(z))$ in our proof of Theorem \ref{Theorem4*}. The essential new concept in Vaqui\'e's work is that of a ``limit key polynomial''. He gave in \cite[Exemple 4.1]{V3} an example of infinite sequences of key polynomials due to the non uniqueness of valuation extension. Vaqui\'e's method is surveyed in Section \ref{SecV}, as well as a study of its relationship to our algorithm. In the situation of this paper we shall meet only finite sequences of limit key polynomials since the number of limit key polynomials is bounded by the degree of $f(z)$.  In Section \ref{Sec5} we collect and derive some results about Henselizations of rings and valued fields which we need for the proof of Theorem \ref{Theorem4*}.

 In this paper, a local ring is a commutative ring with a unique maximal ideal. In particular, we do not require a local ring to be Noetherian. We will denote the maximal ideal of a local ring $A$ by $m_A$.  The quotient field of a domain $A$ will be denoted by ${\rm QF}(A)$. We will say that a local ring $B$ dominates a local ring $A$ if $A\subset B$ and $m_B\cap A=m_A$.
 
 We will denote the natural numbers by $\NN$ and the positive integers by $\ZZ_+$.

\section{Valuations and pseudo valuations}
We shall in the sequel consider sequences of valuations which approximate $\omega$. For that reason we change notations and denote these sequences by $V_0,V_1, \ldots$ as in \cite{M1} and \cite{M2}. A general valuation will be denoted by $V$ and the reader may think of $\nu$ as $V_0$.

Suppose that $V$ is a valuation on a field $K$. We will denote the valuation ring of $V$ by $R_V$ and its maximal ideal by $m_V$. The value group of $V$ will be denoted by $G_V$. 

Suppose that $A$ is a Noetherian local domain with quotient field $K$ and $A\rightarrow A_1$ is an extension of local domains such that  $A_1$ is a domain whose quotient field is $K$ and $A_1$ is essentially of finite type over $A$ ($A_1$ is a localization of a finitely generated $A$-algebra). Then we will say that $A\rightarrow A_1$ is a birational extension.

 If $A$ is a domain which is contained in $R_V$, then the associated graded ring of $A$ along $V$ is
 $
 {\rm gr}_V(A)$ as defined in the introduction, The initial form ${\rm In}_V(g)$ of $g\in A$ is the class of $g$ in $\mathcal P_{V(g)}(A)/\mathcal P^+_{V(g)}(A)$. The semigroup of $V$ on $A$ has also been defined in the introduction.\par\noindent A pseudo valuation (or semivaluation) $V$ on a domain $A$ is a surjective map $V:A\rightarrow G_V\cup\{\infty\}$ where $G_V$ is a totally ordered Abelian group and a prime ideal
 $$
 I(V)_{\infty}=I^A(V)_{\infty}=\{g\in A\mid V(g)=\infty\}
 $$
 of $A_1$ such that $V:{\rm QF}(A/I(V)_{\infty})\setminus \{0\}\rightarrow G_V$ is a valuation.
 
 
\section{The MacLane  theory of key polynomials}\label{Sec2}
 
 Suppose that  $V$ is a valuation or a pseudo valuation on a domain $A$. Following MacLane in \cite{M1} in the case $A=K[z]$, we can define an equivalence $\sim$ on $A$ defined for $g,h\in A$ by $g\sim g$ in $V$ if $V(g-h)>\min\{V(g),V(h)\}$ or $V(g)=V(h)=\infty$.
We say that $g\in A$ is equivalence divisible by $h$ in $V$, written $h|g$ in $V$, if there exists $a\in A$ such that $g\sim ah$ in $V$. An element $g$ is said to be equivalence irreducible in $V$ if $g|ab$ in $V$ implies $g|a$ or $g|b$ in $V$.\par\noindent These conditions can be expressed respectively as the statement that ${\rm In}_V(h))={\rm In}_V(g)$ in ${\rm gr}_V(A)$, that ${\rm In}_V(h)$ divides ${\rm In}_V(g)$ in ${\rm gr}_V(A)$ and that the ideal generated by ${\rm In}_V(g)$ in ${\rm gr}_V(A)$ is prime.

\subsection{MacLane's algorithm}
We review MacLane's algorithm \cite{M1}  to construct the extensions of a valuation $V_0$ of a field $K$ to a valuation or pseudo-valuation of the polynomial ring $K[z]$. MacLane applied his method to construct extensions of rank 1 discrete valuations of  $K$ to $K[z]$. This algorithm  has been extended to general valuations by Vaqui\'e \cite{V1}. MacLane constructs ``augmented sequences of inductive valuations''
\begin{equation}\label{eqM1}
V_1,\ldots,V_k,\ldots
\end{equation}
which extend $V_0$ to $K[z]$. An augmented sequence (\ref{eqM1}) is constructed from successive inductive valuations
\begin{equation}\label{eqM2}
V_k=[V_{k-1};V_k(\phi_k)=\mu_k]\mbox{ for $1\le k$}
\end{equation}
of $K[z]$, where $\phi_k$ is a ``key polynomial'' over $V_{k-1}$  and $\mu_k$ is a ``key value'' of $\phi_k$ over $V_{k-1}$. We always take $\phi_1=z$.

We say that $\phi(z)\in K[z]$ is a key polynomial with key value $\mu$ over $V_{k-1}$ if
\begin{enumerate}
\item[1)] $\phi(z)$ is equivalence irreducible in $V_{k-1}$.
\item[2)] $\phi(z)$ is minimal in $V_{k-1}$; that is, if $\phi(z)$ equivalence divides $g(z)$ in $V_{k-1}$, then $\deg_z\phi(z)\le\deg_zg(z)$.
\item[3)] $\phi(z)$ is unitary and $\deg_z\phi(z)>0$.
\item[4)] $\mu>V_{k-1}(\phi(z))$.
\end{enumerate}
Following MacLane (\cite[Definition 6.1]{M1}) we also assume
\begin{enumerate}
\item[5)] $\deg_z\phi_i(z)\ge\deg_z\phi_{i-1}(z)$ for $i\ge 2$.
\item[6)] $\phi_i(z)\sim \phi_{i-1}(z)$ in $V_{i-1}$ is false. Here the equivalence is to be understood for polynomials in $K[z]$.
\end{enumerate}

It follows from \cite[Theorem 9.3]{M1} that 
\begin{equation}\label{eqM16}
\mbox{if $\phi(z)$ is a key polynomial over $V_{k-1}$ then $\deg_z\phi_{k-1}(z)$ divides $\deg_z\phi(z)$.}
\end{equation} 

The key polynomials $\phi_k(z)$ can further be assumed to be homogeneous in $V_{k-1}$, which will be defined after (\ref{eqM5}).

MacLane shows that if $V_0$ is discrete of rank 1, then the extensions of $V_0$ to a valuation or pseudo valuation of  $K[z]$ are the $V_k$ arising from  augmented sequences of finite length
 (\ref{eqM1}) and the limit sequences of augmented sequences of infinite length (\ref{eqM1}) which determine a limit value
 $V_{\infty} $ on $K[z]$ defined by 
 $$
 V_{\infty}(g(z))=\lim_{k\rightarrow\infty} V_k(g(z))\mbox{ for }g(z)\in K[z].
 $$
 We have that $V_{\infty}(g(z))$ is well defined whenever  $V_0$ has rank 1, and is a valuation or pseudo-valuation by the argument of  \cite[page 10]{M1}.
 
 MacLane's method has been extended by Vaqui\'e \cite{V1},  to eventually construct all extensions of an arbitrary valuation $V_0$ of $K$ to a valuation or pseudo valuation of $K[z]$. We will discuss Vaqui\'e's method in Section \ref{SecV}.
 
 To compute the ``$k$-th stage''  value $V_k(g(z))$  for $g(z)\in K[z]$ by MacLane's method, we consider the unique expansion
 \begin{equation}\label{eqM3}
 g(z)=g_m(z)\phi_k^m(z)+g_{m-1}\phi_k^{m-1}(z)+\cdots+g_0
 \end{equation}
 with $g_i(z)\in K[z]$, $\deg_zg_i(z)<\deg_z\phi_k(z)$ for all $i$ and $g_m(z)\ne 0$. Then
 $$
 V_k(g(z))=\min\{V_{k-1}(g_m(z))+m\mu_k,V_{k-1}(g_{m-1}(z))+(m-1)\mu_k,\ldots,V_{k-1}(g_0)(z)\}.
 $$
 This expression suffices to prove by induction, assuming the existence of a unique expansion of the coefficients $g_i(z)$ in terms of the polynomials $\phi_j(z)$ with $j<k$, that every $g(z)\in K[z]$ has a unique expansion
 \begin{equation}\label{eqM4}
 g(z)=\sum_ja_j(z)\phi_1^{m_{1,j}}(z)\phi_2^{m_{2,j}}(z)\cdots \phi_k^{m_{k,j}}(z)
\end{equation}
with $a_j\in K$ and
  $0\le m_{i,j}<\deg_z\phi_{i+1}/\deg_z\phi_i\mbox{ for }i=1,\ldots,k-1$. Recall that $\deg_z\phi_{i+1}/\deg_z\phi_i$ is a positive integer by (\ref{eqM16}).  Then
\begin{equation}\label{eqM5}
V_k(g)=\min_jV_k(a_j\phi_1^{m_{1,j}}\phi_2^{m_{2,j}}\cdots \phi_k^{m_{k,j}}).
\end{equation}
  If all terms in (\ref{eqM4}) have the same values in $V_k$ then $g$ is said to be homogeneous in $V_k$.\par\noindent
  \textit{We shall often, as we just did, simplify notations by writing $g$ for $g(z)$, etc. when there is no fear of confusion.}

  \begin{Remark}\label{RemarkM20}If 
   $A$ is a subring of $K$ such that $\phi_i\in A[z]$ for $1\le i\le k$ and $g\in A[z]$, then the coefficients $a_j$ in (\ref{eqM4}) are all in $A$.
   \end{Remark}
  
  The polynomial $g$, with expansion (\ref{eqM3}), is minimal in $V_k$ if and only if $g_m\in K$ and \begin{equation}\label{eqM13}
  V_k(g)=V_k(g_m\phi_k^m)
  \end{equation}
  by 2.3 \cite{M2} or Theorem 9.3 \cite{M1}.

  By 3.13 of \cite{M2} or \cite[Theorem 6.5]{M1}, for $k>i$,
  \begin{equation}\label{eqM8}
  V_k(\phi_i)=V_i(\phi_i)\mbox{ and }V_k(g)=V_{i}(g)\mbox{ whenever }\deg_zg<\deg_z\phi_{i+1}.
  \end{equation}
  
 Further, by \cite[Theorems 5.1 and 6.4]{M1}, or \cite[3.11 and 3.12]{M2},
 \begin{equation}\label{eqM15}
 \mbox{For all $g\in K[z]$, $V_k(g)\ge V_{k-1}(g)$ with equality if and only if $\phi_k\not\,\mid g$ in $V_{k-1}$}.
 \end{equation}

\subsection{MacLane's algorithm  in a finite primitive extension}\label{SubSecMA}
Suppose $f(z)\in K[z]$ is unitary and irreducible. The extensions of $V_0$ to valuations of $K[z]/(f(z))$ are the extensions of $V_0$ to 
pseudo valuations $V$ of $K[z]$ such that $I(V)_{\infty}=(f(z))$. MacLane \cite{M2} gives an explicit explanation of how his algorithm can be applied to construct the pseudo valuations $V$ of $K[z]$ which satisfy $I(V)_{\infty}=(f(z))$ in Section 5 of \cite{M2} (when $V_0$ is discrete of rank 1). Vaqui\'e shows in \cite{V2} and \cite{V3} how this algorithm can be extended to arbitrary valuations $V_0$ of $K$.

Suppose $V_1,\ldots, V_k$ is an augmented  sequence of inductive valuations in $K[z]$. Expand 
$$
f=f_m\phi_k^m+\cdots+f_0
$$
as in (\ref{eqM3}). Define the projection of $V_k$ by ${\rm proj}(V_k)=\alpha-\beta$ where $\alpha$ is the largest and $\beta$ is the smallest amongst the exponents $j$ for which $V_k(f(z))=V_k(f_j\phi_k^j)$. 
A $k$-th approximant $V_k$ to $f(z)$ over $V_0$ is a $k$-th stage homogeneous (meaning that the key polynomial $\phi_i$ is homogeneous in $V_{i-1}$ for $i\le k$) inductive valuation which is an extension of $V_0$ and which has a positive projection (\cite[Definition 3.3]{M2}).

First approximants $V_1$ to $f$ are defined as $V_1=[V_0;V_1(\phi_1)=\mu_1]$, where $\phi_1=z$ and $\mu_1$ is chosen so that ${\rm proj}(V_1)>0$. MacLane shows in \cite[Lemma 3.4]{M2} that if $V_k$ is a $k$-th approximant to $f(z)$, then so is $V_i$ for $i=1,\ldots,k-1$. Further, $\phi_k|f$ in $V_{k-1}$ and $V_k(f(z))>V_{k-1}(f(z))>\cdots>V_1(f(z))$. In \cite[Theorem 10.1]{M2}, MacLane shows that if $V_0$ is a discrete valuation of rank 1 then
every extension of $V_0$ to a valuation of $K[z]/(f(z))$ is an augmented sequence of finite length of approximants
$V_1,\ldots,V_k$ such that $V_k(f(z))=\infty$  or a limit of an augmented sequence of approximants of infinite length such that $V_{\infty}(f(z))=\infty$. If $V_0$ is not discrete of rank 1, then there is the possibility that the algorithm will have to be continued to construct a pseudo valuation $W$ of $K[z]$ with $W(f(z))=\infty$. If this last case occurs, then the situation becomes quite complicated, as we must then extend the family $\{V_k\mid k\in \ZZ_+\}$ to a ``simple admissible family'' and possibly make some jumps. 
This is shown by Vaqui\'e in \cite[Theorem 2.5]{V1} and  is explained in Section \ref{SecV}. An essential point is that for every construction
$V_1,\ldots,V_k$ of approximants to $f$ over $V_0$ by MacLane's algorithm, there exists an extension $W$ of $V_0$ to a pseudo valuation of $K[x]$ such that $I(W)_{\infty}=(f(z))$ and $W(\phi_k)=V_k(\phi_k)$ for all $k$ (This will be deduced from \cite[Theorem 1]{V3}  in Theorem \ref{Theorem4}).

We will assume now that $V_0$ has rank 1, so we may assume that $G_{V_0}$ is an ordered subgroup of $\RR$. We will now look a little more at the case where  we have an infinite sequence of approximants, leading to a limit valuation $V_{\infty}$. In this case, there exists $k_0$ such that $\phi_k=\phi_{k_0}+h_k$ with $\deg_zh_k<\deg_z\phi_{k_0}$ for $k\ge k_0$. Thus for $k>k_0$, 
$$
V_k(\phi_k)>V_{k-1}(\phi_k)\ge V_{k-1}(\phi_{k-1}).
$$
Thus $\lim_{k\rightarrow\infty}V_k(\phi_k)$ exists, and is either equal to $\infty$ or an element of $\RR$. 

\begin{Lemma}\label{Lemma4} Suppose that $V_0$ has rank 1 and $V_1,\ldots,V_k,\ldots$ is an infinite sequence of approximants to $f$ over $V_0$. Then the following are equivalent:
\begin{enumerate}
\item[1)] $V_{\infty}=\lim_{k\rightarrow\infty}V_k$ is a pseudo valuation on $K[z]$ (but not a valuation).
\item[2)] $I^{K[z]}(V_{\infty})_{\infty}=(f(z))$.
\item[3)] $\lim_{k\rightarrow \infty} V_k(\phi_k)=\infty$.
\end{enumerate}
\end{Lemma}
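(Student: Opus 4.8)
The plan is to prove the cycle of implications $(3)\Rightarrow(2)\Rightarrow(1)\Rightarrow(3)$. Throughout I would use three facts recalled above: $V_\infty$ is in any case a valuation or a pseudo valuation on $K[z]$ (the argument of \cite[p.~10]{M1}, valid since $V_0$ has rank one); by (\ref{eqM15}) the sequence $k\mapsto V_k(g)$ is nondecreasing for each $g$, so $V_\infty(g)=\sup_kV_k(g)$; and the degrees $\deg_z\phi_k$ stabilize at some integer $d$, so by (\ref{eqM8}) any $g$ with $\deg_zg<d$ satisfies $V_\infty(g)=V_k(g)$ (a finite value) for all $k\gg0$. Since $f$ is monic with coefficients in $R_{V_0}$, a Newton polygon argument gives $\mu_1=V_1(z)\ge0$ and, inductively, lets us take the $\phi_k$ monic in $R_{V_0}[z]$, so that every value occurring below is $\ge0$. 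Finally, $(2)\Rightarrow(1)$ is immediate: a pseudo valuation with $I(V_\infty)_\infty\ne(0)$ cannot be a valuation.

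For $(3)\Rightarrow(2)$ I would first show $V_\infty(f)=\infty$. Fix $k\gg0$ so that $\deg_z\phi_k=d$, and expand $f=\sum_if_i\phi_k^i$ as in (\ref{eqM3}) with $f_i\in R_{V_0}[z]$. Since $V_k$ is an approximant to $f$ it has positive projection, so the minimum computing $V_k(f)$ is attained at some exponent $\alpha\ge1$; hence $V_k(f)=V_{k-1}(f_\alpha)+\alpha\mu_k\ge\mu_k$, which tends to $\infty$ by (3). So $V_\infty(f)=\infty$, whence $V_\infty$ is a pseudo valuation (not a valuation) and $I(V_\infty)_\infty$ is a nonzero prime ideal of the principal ideal domain $K[z]$; containing the irreducible polynomial $f$, it must equal $(f)$.

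For $(1)\Rightarrow(3)$ I would argue by contraposition: assume $\mu_\infty:=\lim_kV_k(\phi_k)<\infty$, and show $V_\infty(g)<\infty$ for every nonzero $g\in K[z]$, so that $V_\infty$ is a genuine valuation and $(1)$ fails. By the first paragraph we may assume $\deg_zg\ge d$. For $k>k_0$ the polynomials $\phi_k$ are pairwise distinct (if $\phi_{k}=\phi_{k'}$ with $k'>k$, then $\mu_{k'}=V_{k'}(\phi_{k'})=V_{k'}(\phi_k)=V_k(\phi_k)=\mu_k$ by (\ref{eqM8}), contradicting that $\mu_k$ is strictly increasing for $k>k_0$), so only finitely many of them divide $g$ in $K[z]$. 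Thus for $k\gg0$ the division $g=\phi_kq_k+r_k$ has $r_k\ne0$, $\deg_zr_k<d$, and $r_k$ is the degree-zero term of the $\phi_k$-expansion of $g$, so $V_k(g)\le V_k(r_k)=V_{k-1}(r_k)$. If $V_\infty(g)=\infty$, then since $V_k(g)$ is nondecreasing we get $V_{k-1}(r_k)\to\infty$, i.e.\ $r_k\to0$ in value. Substituting this into $g=\phi_kq_k+r_k$, using $\phi_k=\phi_{k_0}+h_k$ for $k\ge k_0$ (so that $V_{k-1}(h_k)=\mu_{k_0}$), and using once more that $\mu_k$ is strictly increasing, one is forced to conclude that $V_k(\phi_k)$ is eventually constant --- a contradiction. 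Hence $V_\infty$ is a valuation.

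The one real obstacle is the final step of $(1)\Rightarrow(3)$: turning the heuristic ``if $\mu_k$ stays bounded then the $\phi_k$-expansion of a \emph{fixed} polynomial cannot acquire unbounded value'' into a rigorous argument requires careful bookkeeping of how the $\phi_k$-expansion of $g$ transforms along the limit sequence under the passage $\phi_k\mapsto\phi_{k+1}=\phi_k+\psi_k$ --- i.e.\ how $q_k$, $r_k$ and the higher coefficients change --- and is in substance MacLane's dichotomy for limit inductive valuations (\cite{M1}, and its extension by Vaqui\'e used later in the paper). Everything else is formal, relying only on (\ref{eqM8}), (\ref{eqM15}) and the definition of a positive-projection approximant together with the fact that $\mu_k$ is ultimately strictly increasing.
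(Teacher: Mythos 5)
Your directions $(3)\Rightarrow(2)$ and $(2)\Rightarrow(1)$ are essentially the paper's own argument (the paper runs the cycle as $1)\Rightarrow 3)\Rightarrow 2)\Rightarrow 1)$, which is only a cosmetic difference), modulo one small caveat: to get $V_k(f)\ge \mu_k$ you invoke nonnegativity of all values via the extra hypothesis that $f$ has coefficients in $R_{V_0}$, which is not part of the statement of the lemma; the paper instead argues directly from the fact that, the projection being positive, the minimum in $V_k(f)=\min_i\{V_{k-1}(f_i)+i\mu_k\}$ is attained at two distinct exponents.

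The genuine problem is $(1)\Rightarrow(3)$, and it is exactly the step you flag yourself: your contrapositive bookkeeping with the remainders $r_k$ in $g=\phi_kq_k+r_k$ is never carried out, and appealing to ``MacLane's dichotomy for limit inductive valuations'' is appealing to (essentially) the statement being proved, so as written this direction is not established. The gap is avoidable, and the paper's proof shows how, by arguing directly rather than by contraposition: take $0\ne h\in I(V_\infty)_\infty$ and expand $h=h_m\phi_{k_0}^m+\cdots+h_0$ with $\deg_zh_i<\deg_z\phi_{k_0}$. Multiplying $h$ by a suitable $\alpha z^{\lambda}$ (which only shifts values by the fixed finite amount $V_\infty(\alpha z^\lambda)$, so the product still lies in $I(V_\infty)_\infty$) one may arrange, after re-expanding, that the leading coefficient of the $\phi_{k_0}$-expansion is $1$; and since for $k\ge k_0$ each $\phi_k$ differs from $\phi_{k_0}$ by a polynomial of degree $<\deg_z\phi_{k_0}$, the $\phi_k$-expansion of this normalized $h$ is again monic in $\phi_k$ of the same degree $m$ for every $k\ge k_0$. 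Then by the defining formula (\ref{eqM5}) one gets $V_k(h)\le mV_k(\phi_k)$ for all $k\ge k_0$, and since $V_k(h)\rightarrow\infty$ this forces $\lim_{k\rightarrow\infty}V_k(\phi_k)=\infty$. This normalization trick replaces all of your ``careful bookkeeping'' of how $q_k$ and $r_k$ evolve, needs no pseudo-convergence or limit-key-polynomial theory, and uses only (\ref{eqM3})--(\ref{eqM5}) and (\ref{eqM8}); I recommend you rewrite $(1)\Rightarrow(3)$ along these lines.
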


\begin{proof} We first prove 1) implies 3). By assumption, there exists $0\ne h\in I(V_{\infty})_{\infty}$. 
There exists $k_0$ such that 
for $k\ge k_0$, $\deg_z\phi_k=\deg_z\phi_{k_0}$. 
Expand 
$$
h=h_m\phi_{k_0}^m+h_{m-1}\phi_{k_0}^{m-1}+\cdots+h_0
$$
with $\deg_zh_i<\deg_z\phi_{k_0}$ for all $i$ and $h_m\ne 0$. There exists $\lambda\in\ZZ_+, \ 1\leq\lambda \leq\deg\phi_{k_0}$ such that
$\deg_zz^{\lambda}h_m=\deg_z\phi_{k_0}$ and so there exists $0\ne\alpha\in K$ such that $\alpha z^\lambda h_m=\phi_{k_0}+\eta_m$ with $\deg \eta_m<\deg\phi_{k_0}$. This implies that  $\alpha z^{\lambda}h$ has an expansion
$$
\alpha z^{\lambda}h=\phi_{k_0}^{m+1}+\eta_m\phi_{k_0}^m+\alpha z^{\lambda}h_{m-1}\phi_{k_0}^{m-1}+\cdots+\alpha z^\lambda h_{m-j}\phi_{k_0}^{m-j}+\cdots +\alpha z^\lambda h_0
$$
with $\deg_zz^\lambda h_{m-j}<2\deg_z\phi_{k_0}$ for all $j$. Now we can expand each $\alpha z^\lambda h_{m-j} =\eta_{m-j}\phi_{k_0}+\theta_{m-j}$, with $\deg_z \eta_{m-j}$ and $\deg_z \theta_{m-j}$ less than $\deg_z\phi_{k_0}$, so that finally we can expand
$$
\alpha z^\lambda h=\phi_{k_0}^{m+1}+h'_m\phi_{k_0}^m+ \cdots +h'_{m+1-j}\phi_{k_0}^{m+1-j}+\cdots +h'_0
$$
with $\deg_zh'_{m+1-j}<\deg_z\phi_{k_0}$ for all $j$.
Thus, substituting $\alpha z^{\lambda}h\in I(V_{\infty})_{\infty}$ for $h$ and continuing to denote by $m$ the degree of its expansion in $\phi_{k_0}$, we may assume that $h_m =1$.
The same argument shows that for $k\geq k_0$ there exist  $h_i(k)\in K[z]$ for $i<m$ such that 
$$
h=\phi_k^m+h_{m-1}(k)\phi_k^{m-1}+\cdots +h_0(k)
$$
with $\deg_zh_j(k)<\deg_z\phi_k$. Now by definition of $V_k$ we have
$$
V_k(h)\le mV_k(\phi_k)
$$
for $k\ge k_0$, so $\lim_{k\rightarrow\infty}V_k(\phi_k)=\infty$.

We now prove that  3) implies  2). In the expansion 
$$
f=f_m\phi_k^m+\cdots+ f_0
$$
with $\deg_zf_i<\deg_z\phi_k$, we have that at least two distinct terms have the same value
$$
V_k(f(z))=\min_i\{V_{k-1}(f_i)+iV_k(\phi_k)\}.
$$
Thus $V_k(f(z))\ge V_k(\phi_k)$ for all $k$, which implies
$$
\lim_{k\rightarrow \infty}V_k(f(z))=\infty
$$
so that $f\in I(V_{\infty})_{\infty}$.  Now $I(V_{\infty})_{\infty}$ is a proper principal ideal in $K[z]$ and $f$ is ireducible in $K[z]$ so $I(V_{\infty})_{\infty}=(f(z))$.

Finally, 2) implies 1) follows since $I(V_{\infty})_{\infty}\ne (0)$. 
\end{proof}

We observe that if the equivalent conditions of Lemma \ref{Lemma4} hold and $g\in K[z]$ is such that $f\not | g$, then there exists $k$ such that $V_k(g)=V_{\infty}(g)$. This follows since we can find a $\phi_k$ such that $V_k(\phi_k)=V_{\infty}(\phi_k)>V_k(g)$. Then, expanding
$g=g_m\phi_k^m+\cdots+g_0$ with $\deg_zg_i<\deg_z\phi_k$, we have that $V_{\infty}(g)=V_k(g)=V_k(g_0)$.

For the rest of this section, we will assume that $V_0$ has arbitrary rank.
MacLane gives the following explanation of how to find all of the extensions of a $(k-1)$-st stage  approximant $V_{k-1}$ to $f$ over $V_0$ to a $k$-th stage approximant $V_k$ to $f$ over $V_0$. 

We say that $e\in K[z]$ is an ``equivalence unit'' for $V_k$ if there exists an ``equivalence-reciprocal''  $h\in K[z]$ such that $eh\sim 1$ in $V_k$. It is shown in Section 4 of \cite{M2} that $e$ is an equivalence unit if and only if $e$ is equivalent in   $V_k$ to a polynomial $g$ such that $\deg_zg<\deg_z\phi_k$.

 By \cite[Theorem 4.2 ]{M2}, $f$ has an essentially unique (unique up to equivalence in $V_{k-1}$) expression

\begin{equation}\label{eqM6}
f\sim e\phi_{k-1}^{m_0}\psi_1^{m_1}\cdots\psi_t^{m_t}
\end{equation}
in $V_{k-1}$, with $m_0\in \NN$ and $m_1,\ldots,m_t>0$.  Here $e$ is an equivalence unit for $V_{k-1}$  and $\psi_1,\ldots,\psi_t$ are homogeneous key polynomials  over $V_{k-1}$ all not equivalent to  $\phi_{k-1}$ in $V_{k-1}$ and not equivalent in $V_{k-1}$ to  each other. We have that $t>0$ since ${\rm proj}(V_{k-1})>0$. We have that $\phi_{k-1}$ is a homogeneous  key polynomial in $V_{k-1}$ by \cite[Lemma 4.3]{M2}.

If $f$ is a homogeneous key polynomial for $V_{k-1}$, then $V_k=[V_{k-1};V_k(f(z))=\infty]$ is a 
pseudo valuation of $K[z]$ with $I(V)_{\infty}=(f(z))$.

If $f$ is not a homogeneous key polynomial for $V_{k-1}$, then none of the $\psi_i$ are equal to $f$, and we may define a $k$-th stage approximant to $f$ over $V_0$ which is an inductive valuation of $V_{k-1}$ by $V_k=[V_{k-1};V_k(\phi_k)=\mu_k]$ where $\phi_k$ is one of the $\psi_i$. In the expansion (\ref{eqM3}) of $f$,
$$
f=f_m\phi_k^m+\cdots+f_0
$$
$\mu_k$ must be chosen so that ${\rm proj}(V_k)>0$. All $k$-th stage approximants $V_k$ to $f$ extending $V_{k-1}$ are found by the above procedure.

Let $T=\RR\times G_{V_0}$. Given $\alpha,\beta\in G_{V_0}$ and $q\in \RR$, we have the line
$$D=\{(x,\gamma)\in T\mid q\gamma+\alpha x+\beta=0\}
$$
in $T$. When $q\ne 0$, we define the slope of $D$ to be $-\frac{\alpha}{q}\in G_{V_0}\otimes_{\ZZ}\RR$.
Associated to $D$ are two half spaces of $T$,
$$
H^D_{\ge}=\{(x,\gamma)\in T\mid q\gamma+\alpha x+\beta\ge 0\}
$$
and
$$
H^D_{\le}=\{(x,\gamma)\in T\mid q\gamma+\alpha x+\beta\le0\}.
$$
 Given a subset $A$ of $T$, the convex closure of $A$ is ${\rm Conv}(A)=\cap H$ where $H$ runs over the half spaces of $T$ which contain $A$.

The Newton polygon is constructed as on page 500 of \cite{M2} and page 2510 of  \cite{V3}. These constructions are equivalent but slightly different. We use the convention of \cite{M2}. 
 The possible values $\mu_k$ can be conveniently found from the Newton polygon $N(V_{k-1},\phi_k)$. This is constructed by taking the convex closure in  $T$ of
 $$
 A=\{(m-i,\delta)\mid \delta\ge V_{k-1}(f_i), 0\le i\le m\},
 $$
 where the union is over $i$ such that $f_i\ne 0$. A segment $F$ of the boundary of ${\rm Conv}(A)$ is a subset $F$ of ${\rm Conv}(A)$ which is defined by $F={\rm Conv}(A)\cap D$ where $D$ is a line of $T$ such that ${\rm Conv}(A)$ is contained in one of the half spaces $H^D_{\ge}$ or $H^D_{\le}$ defined by $D$ and $F={\rm Conv}(A)\cap D$ contains at least two distinct points.

The  slopes $\mu$ of the segments of $N(V_{k-1},\phi_k)$ satisfying $\mu>V_{k-1}(\phi_k)$ are the possible values of $\phi_k$. The polygon composed of those segments of slope $\mu$ with $\mu>V_{k-1}(\phi_k)$ is called the principal part of the Newton polygon $N(V_{k-1},\phi_k)$.

In the proof of Theorem 5.1 of \cite{M2}, it is shown that for $1\le i\le t$, the principal polygon of $N(V_{k-1},\psi_i)$  (from (\ref{eqM6})) is 
\begin{equation}\label{eqM9}
\{(x,y)\in N(V_{k-1},\psi_i)\mid x\ge m-m_i\}.
\end{equation}
Further, $m_0$ is the smallest exponent $i$ such that in the expansion $f=\sum f_i\phi_{k-1}^i$ with $\deg_z f_i<\deg_z\phi_{k-1}$, we have that $V_{k-1}(f_i\phi_{k-1}^i)=V_{k-1}(f(z))$.

\begin{Remark}\label{RemarkM17}
 If the coefficients of $f(z)$ are all in the valuation ring $R_{V_0}$ of $V_0$, then the coefficients of all key polynomials $\phi_k$ are also in $R_{V_0}$, as is established in \cite[Theorem 7.1]{M2}.
\end{Remark}

The following theorem follows from a criterion of \cite{V3}.

\begin{Theorem}\label{Theorem4} Suppose that $V_k$ is a $k$-th approximant to $f$ over $V_0$. Then there exists a pseudo valuation $W$ of $K[z]$ such that $W| K=V_0$, $I(W)_{\infty}=(f(z))$, $W(g)\ge V_k(g)$ for all $g\in K[z]$ and $W(\phi_i)=V_i(\phi_i)$ for $1\le i\le k$.
\end{Theorem}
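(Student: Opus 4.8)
The plan is to reduce Theorem \ref{Theorem4} to the existence theorem of Vaqui\'e stated as \cite[Theorem 1]{V3}, which guarantees that every ``complete admissible family'' of truncated valuations extends to a pseudo-valuation with prescribed ``infinite'' ideal. First I would recall that $V_k$, being a $k$-th approximant to $f$ over $V_0$, comes equipped with the augmented sequence $V_1,\dots,V_k$ of inductive valuations built from key polynomials $\phi_1=z,\dots,\phi_k$ with key values $\mu_1,\dots,\mu_k$, and that by MacLane's \cite[Lemma 3.4]{M2} every truncation $V_i$ ($1\le i\le k$) is again an approximant to $f$ over $V_0$, with $\phi_i\mid f$ in $V_{i-1}$ and the strict chain $V_k(f)>V_{k-1}(f)>\cdots>V_1(f)>V_0(f)$. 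The positivity of all the projections ${\rm proj}(V_i)$ is exactly the hypothesis that lets Vaqui\'e's machine run.

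The key step is to observe that whether or not $f$ is already a homogeneous key polynomial over $V_{k-1}$, one of two things happens. If $f$ \emph{is} a homogeneous key polynomial over $V_{k-1}$ (or over some $V_{i-1}$ with $i\le k$), then $W=[V_{k-1};W(f)=\infty]$ — or the corresponding $[V_{i-1};W(f)=\infty]$ — is already a pseudo-valuation of $K[z]$ with $I(W)_\infty=(f(z))$, as recorded just after (\ref{eqM6}) in the excerpt, and the conditions $W|K=V_0$, $W(\phi_i)=V_i(\phi_i)$ and $W(g)\ge V_k(g)$ follow directly from the inductive structure and (\ref{eqM15})–(\ref{eqM8}). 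Otherwise, using the factorization (\ref{eqM6}), $f$ is not equivalent to any $\psi_j$, one continues the algorithm by choosing $\phi_{k+1}$ among the $\psi_j$ with $\mu_{k+1}$ a slope of the principal part of $N(V_k,\phi_{k+1})$ forcing ${\rm proj}(V_{k+1})>0$, and iterates. This produces a well-ordered (possibly transfinite, but in the present hypersurface setting eventually terminating or limiting) family of approximants which, by construction, is an ``admissible family'' in Vaqui\'e's sense. Applying \cite[Theorem 1]{V3} to this family yields a pseudo-valuation $W$ of $K[z]$ with $W|K=V_0$ and $I(W)_\infty=(f(z))$, and satisfying $W(\phi_j)=V_j(\phi_j)$ for every $j$ in the family — in particular for $1\le j\le k$. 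Finally $W(g)\ge V_k(g)$ for all $g\in K[z]$ follows from (\ref{eqM15}): passing from $V_k$ to $V_{k+1},V_{k+2},\dots$ (and through the limit/jump steps) never decreases the value of any polynomial, and $W$ is the supremum of this chain.

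The main obstacle I expect is purely bookkeeping: one must check that the augmented sequence of approximants $V_1,\dots,V_k$ really does assemble into an object to which \cite[Theorem 1]{V3} applies — i.e. verifying the ``admissibility'' and ``completeness'' conditions of Vaqui\'e (the half-space / Newton-polygon conditions encoded in the passage around (\ref{eqM9}), plus the compatibility of slopes along the chain), and handling the transfinite continuation cleanly (the potential ``jumps'' past a limit valuation, mentioned in the discussion after \cite[Theorem 10.1]{M2}). There is no genuinely hard inequality to prove; the content is in matching MacLane's combinatorial data to Vaqui\'e's axioms and then invoking his existence result. I would therefore structure the proof as: (1) truncate and recall positivity of projections; (2) dichotomy on whether $f$ becomes a homogeneous key polynomial at some finite stage; (3) in the non-terminating branch, package the approximants as a Vaqui\'e-admissible family and quote \cite[Theorem 1]{V3}; (4) read off $W|K=V_0$, $I(W)_\infty=(f(z))$, $W(\phi_i)=V_i(\phi_i)$ from the construction, and $W(g)\ge V_k(g)$ from monotonicity (\ref{eqM15}).
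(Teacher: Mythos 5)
Your instinct to reduce everything to Vaqui\'e is the right one, and your steps (1), (2) and (4) are fine, but the central step (3) contains a genuine gap. The paper's own proof needs no continuation of the sequence at all: since $V_k$ is an approximant, $\phi_k\,|\,f$ in $V_{k-1}$, and since ${\rm proj}(V_k)>0$, MacLane's factorization (\ref{eqM6}) applied at stage $k$ yields a homogeneous key polynomial $\psi$ over $V_k$, not equivalent to $\phi_k$ in $V_k$, with $\psi\,|\,f$ in $V_k$; these two divisibility facts are exactly the hypotheses of Vaqui\'e's Newton-polygon criterion \cite[Theorem 1]{V3}, whose conclusion already is the existence of a pseudo valuation $W$ with $I(W)_{\infty}=(f(z))$, $W\ge V_k$ and the stated compatibilities. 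In other words, \cite[Theorem 1]{V3} is a \emph{criterion} in terms of equivalence-divisibility of $f$ by key polynomials, not the existence theorem for ``complete admissible families'' that you attribute to it. The statement you describe is closer to \cite[Theorem 2.5]{V1}, but that result goes in the opposite direction: it decomposes a \emph{given} pseudo valuation $\mu$ with $I(\mu)_{\infty}=(f(z))$ into an admissible family. Admissibility is defined relative to the target $\mu$ (the inequalities (\ref{eqV2}), the value sets $\Lambda(\mathcal A)=\{\mu(\phi)\}$, the choice of limit key values), so you cannot verify admissibility of your constructed family, nor make the required choices at limit stages, without already having the $W$ you are trying to produce; as written, step (3) is circular.

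Moreover, the transfinite continuation itself is precisely the hard content you wave away. For a non-discrete $V_0$ an infinite chain of approximants need not satisfy $V_i(f)\rightarrow\infty$: its limit can be an honest valuation, after which one must choose a limit key polynomial and assign it a value (a ``jump''), and one must argue that such choices can always be steered so that the process terminates with infinite ideal exactly $(f(z))$ and never assigns the value $\infty$ to a polynomial other than $f$. Justifying all of this is essentially re-proving Vaqui\'e's construction rather than citing it. The repair is simple and is what the paper does: drop the continuation entirely, verify the two divisibility hypotheses of \cite[Theorem 1]{V3} at stage $k$ (using (\ref{eqM6}) and the positivity of the projection), and read off $W|_K=V_0$, $I(W)_{\infty}=(f(z))$, $W(g)\ge V_k(g)$ and $W(\phi_i)=V_i(\phi_i)$ directly from its conclusion.
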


\begin{proof} As explained in the construction of $V_k$ above, we have that $\phi_k|f$ in $V_{k-1}$, and there exists a key polynomial $\psi$ for $V_k$ with $\psi$ not equivalent to $\phi_k$ in $V_k$ and such that $\psi|f$ in $V_k$. The theorem now follows from \cite[Theorem 1]{V3}.
\end{proof}

\section{An algorithm to construct generating sequences}\label{Sec3}

Let  $V_0$ be a   valuation of a field $K$. Suppose that there exists an algebraically closed field $\mathbf k$ such that $\mathbf k\subset R_{V_0}$ and $R_{V_0}/m_{V_0}\cong \mathbf k$. Let $f(z)\in R_{V_0}[z]$ be an irreducible unitary polynomial.

In this section we give an inductive construction of a sequence of approximants to $f$ over $V_0$, so that the key polynomials constructed have a particularly nice form.  We will call the sequence of approximants ``a realization of the algorithm of Section \ref{Sec3}''. We will prove the following theorem by induction on the index $k$.

\begin{Theorem}\label{TheoremRMA}
Suppose that $W$ is a pseudo valuation of $K[z]$ extending  $V_0$ such that $I(W)_{\infty}=(f(z))$. Then we can construct a sequence of approximants to $f$ over $V_0$
\begin{equation}\label{eqN200}
V_1,\ldots,V_k,\ldots,
\end{equation}
where 
\begin{equation}\label{eq1}
V_i=[V_{i-1};V_i(\phi_i)=W(\phi_i)]
\end{equation}
for all $i$  such that the key polynomials $\phi_i$ satisfy $\phi_1=z$ in $V_0$ and 
\begin{equation}\label{eq21}
\phi_i=\phi_{i-1}^{n_{i-1}}-c_{i-1}\phi_1^{j_1(i-1)}\cdots\phi_{i-2}^{j_{i-2}(i-1)}
\end{equation}
 in $V_{i-1}$ for $2\le i\le k$ with $c_{i-1}\in R_{V_0}$, $n_{i-1}=[G_{V_{i-1}}:G_{V_{i-2}}]$ and $0\le j_l(m)<n_l$ for all $l$ and $m$.
The sequence (\ref{eqN200}) is either of finite length $k$ with $\phi_k=f$ and $V_k(f(z))=\infty$ or the sequence is infinite.
 \end{Theorem}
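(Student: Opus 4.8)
I would argue by induction on $k$. Along the induction, besides (\ref{eq1})--(\ref{eq21}), I carry the following properties of the already--built $V_1,\dots,V_{k-1}$: each $V_i$ is an $i$-th approximant to $f$ over $V_0$; one has $c_{i-1}\in m_{V_0}$ and $0\le j_l(m)<n_l$; $W(g)\ge V_i(g)$ for every $g\in K[z]$, with equality whenever $\deg_z g<\deg_z\phi_{i+1}$; and the bookkeeping inequality $\mu_i:=W(\phi_i)>n_{i-1}\mu_{i-1}$ for $i\ge 2$ (with $V_{i-1}(\phi_i)=n_{i-1}\mu_{i-1}$). A remark used throughout: $W$ induces a valuation of $L=K[z]/(f)$ whose residue field is finite over $R_{V_0}/m_{V_0}=\mathbf k$, hence equals $\mathbf k$; this absence of residue growth, forced by $\mathbf k$ being algebraically closed, is exactly what lets the key polynomials take the monomial form with coefficients in $R_{V_0}$. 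For the base case, $f$ unitary in $R_{V_0}[z]$ makes the class of $z$ in $L$ integral over $R_{V_0}$, so $W(z)\ge 0$; after the harmless normalization $z\mapsto z-a$ with $a\in R_{V_0}$ representing the residue of $z$ in $R_W/m_W$ we may assume $W(z)>0$, put $\phi_1=z$, $V_1=[V_0;V_1(\phi_1)=W(z)]$, and read ${\rm proj}(V_1)>0$ off the Newton polygon $N(V_0,\phi_1)$ using $W(f)=\infty$.

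For the inductive step, first set $n_{k-1}:=[G_{V_{k-1}}:G_{V_{k-2}}]$, a finite integer equal to the order of $\mu_{k-1}$ in $G_{V_{k-1}}/G_{V_{k-2}}$. Since $n_{k-1}\mu_{k-1}\in G_{V_{k-2}}=G_{V_0}+\ZZ\mu_1+\cdots+\ZZ\mu_{k-2}$, reducing exponents from the top down (using $n_l\mu_l\in G_{V_{l-1}}$) produces a unique expression $n_{k-1}\mu_{k-1}=\gamma_0+\sum_{l=1}^{k-2}j_l(k-1)\mu_l$ with $\gamma_0\in G_{V_0}$ and $0\le j_l(k-1)<n_l$; the telescoping bound $\sum_{l=1}^{m}(n_l-1)\mu_l<\mu_{m+1}$, proved by induction on $m$ from $\mu_{l+1}>n_l\mu_l$, then gives $\gamma_0>0$. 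Fix $c\in m_{V_0}$ with $V_0(c)=\gamma_0$, put $\mathfrak m:=c\,\phi_1^{j_1(k-1)}\cdots\phi_{k-2}^{j_{k-2}(k-1)}$ (homogeneous, of $V_{k-1}$- and $W$-value $n_{k-1}\mu_{k-1}$, of $z$-degree $<\deg_z\phi_{k-1}$), let $\bar\beta\in R_W/m_W=\mathbf k$ be the residue of $\phi_{k-1}^{n_{k-1}}/\mathfrak m$ (which has $W$-value $0$, and is nonzero since ${\rm gr}_W(L)$ is a graded field), choose a lift $\beta\in\mathbf k$ and set $c_{k-1}:=\beta c\in m_{V_0}$ and
$$
\phi_k:=\phi_{k-1}^{n_{k-1}}-c_{k-1}\,\phi_1^{j_1(k-1)}\cdots\phi_{k-2}^{j_{k-2}(k-1)}.
$$
Then $\phi_k$ is unitary of $z$-degree $n_{k-1}\deg_z\phi_{k-1}$, homogeneous in $V_{k-1}$, and $V_{k-1}(\phi_k)=n_{k-1}\mu_{k-1}$: there is no cancellation in $V_{k-1}$, because the residue of $\phi_{k-1}^{n_{k-1}}/\mathfrak m$ in the (possibly larger) residue field of $V_{k-1}$ cannot lie in $\mathbf k$ — otherwise $\phi_{k-1}^{n_{k-1}}$ would be $V_{k-1}$-equivalent to a polynomial of $z$-degree $<\deg_z\phi_{k-1}$, contradicting the minimality of $\phi_{k-1}$. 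On the other hand that residue becomes $\bar\beta$ in $R_W/m_W$, so the two leading terms of $\phi_k$ cancel under $W$ and $\mu_k:=W(\phi_k)>n_{k-1}\mu_{k-1}=V_{k-1}(\phi_k)$; hence $\mu_k$ is a legitimate key value for $\phi_k$ and the bookkeeping inequality persists.

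It remains to check that $\phi_k$ is a homogeneous key polynomial over $V_{k-1}$, that $V_k:=[V_{k-1};V_k(\phi_k)=\mu_k]$ is a $k$-th approximant to $f$, and that $W\ge V_k$ with the low-degree equality. That $\deg_z\phi_k=n_{k-1}\deg_z\phi_{k-1}$ is the minimal $z$-degree of a key polynomial over $V_{k-1}$ — again because the residue field does not grow — and that a polynomial of this $z$-degree of the normal form $\phi_{k-1}^{n_{k-1}}-(\text{homogeneous of smaller }z\text{-degree})$ is equivalence-irreducible and minimal over $V_{k-1}$, follow from MacLane's analysis of key polynomials in \cite{M1}; the equivalence-irreducibility is precisely where $n_{k-1}=[G_{V_{k-1}}:G_{V_{k-2}}]$ enters, since then $d\mu_{k-1}\notin G_{V_{k-2}}$ for $0<d<n_{k-1}$. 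To see that $\phi_k$ ``points toward $f$'', expand $f\sim e\phi_{k-1}^{m_0}\psi_1^{m_1}\cdots\psi_t^{m_t}$ in $V_{k-1}$ as in (\ref{eqM6}) and push this identity through the graded homomorphism ${\rm gr}_{V_{k-1}}(K[z])\to{\rm gr}_W(K[z])$ sending ${\rm In}_V(g)\mapsto{\rm In}_W(g)$ when $W(g)=V_{k-1}(g)$ and $\mapsto 0$ otherwise (well defined since $W\ge V_{k-1}$): it kills ${\rm In}_V(f)$ because $W(f)=\infty$, but not ${\rm In}_V(e)$ (by $W(e)=V_{k-1}(e)$, a short consequence of the carried low-degree equality) nor ${\rm In}_V(\phi_{k-1})$ (by $W(\phi_{k-1})=\mu_{k-1}$), so $W(\psi_{i_0})>V_{k-1}(\psi_{i_0})$ for some $i_0$; and a short $V_{k-1}$-value computation shows any minimal-degree key polynomial over $V_{k-1}$ with that last property is $V_{k-1}$-equivalent to $\phi_k$, so $\phi_k\sim\psi_{i_0}$, $\phi_k\mid f$ in $V_{k-1}$, and by (\ref{eqM9}) ${\rm proj}(V_k)>0$. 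Finally $W\ge V_{k-1}$ together with $W(\phi_k)=\mu_k=V_k(\phi_k)$ gives $W\ge V_k$, and (\ref{eqM8}) yields $W(g)=V_k(g)$ for $\deg_z g<\deg_z\phi_{k+1}$, so all carried properties pass to stage $k$. For termination: the instant $\deg_z\phi_k=\deg_z f$ one has $\phi_k\sim f$ in $V_{k-1}$ (both monic of that $z$-degree and $\phi_k\mid f$ in $V_{k-1}$), so take $\phi_k=f$; since $W(f)=\infty$, $V_k=[V_{k-1};V_k(f)=\infty]$ is a pseudo valuation of $K[z]$ with $I(V_k)_\infty=(f)$, and the sequence has length $k$. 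Otherwise $\deg_z\phi_k<\deg_z f$ at every stage and the sequence is infinite.

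The step I expect to be the real obstacle is this last one: showing that the single explicit $\phi_k$, built only from $\mu_{k-1}$, the monomial $\mathfrak m$, and the one residue $\bar\beta\in\mathbf k$, is — up to equivalence in $V_{k-1}$ — exactly the key polynomial that MacLane's family (\ref{eqM6}) for $f$ offers and that $W$ prolongs. This requires understanding ${\rm gr}_{V_{k-1}}(K[z])$ finely enough to pin down the minimal-degree key polynomials over $V_{k-1}$ and to verify their equivalence-irreducibility, and it requires the compatibility of the initial-form map ${\rm gr}_{V_{k-1}}\to{\rm gr}_W$ with the residue-field computation over the algebraically closed $\mathbf k$. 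The rest is bookkeeping — but the bookkeeping, in particular $W=V_i$ in $z$-degrees below $\deg_z\phi_{i+1}$ and $\mu_i>n_{i-1}\mu_{i-1}$, is exactly what makes the value-group reduction succeed and guarantees $c_{k-1}\in R_{V_0}$.
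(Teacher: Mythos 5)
Your overall architecture is genuinely different from the paper's: the paper (proof of Theorem \ref{TheoremRMA} via Theorem \ref{Prop1}) carries through the induction an explicit $V_{k-1}$-equivalence factorization of $f$ into binomial key polynomials, produced by the Newton polygon segment of slope $W(\phi_k)$ and the factorization of the associated residual polynomial over the algebraically closed field $\mathbf k$ (with Lemma \ref{Lemma80} giving $c_k\in R_{V_0}$ and Lemma \ref{Lemma1} giving equivalence-irreducibility), and then takes $\phi_k$ to be the unique factor whose $W$-value jumps. You instead build $\phi_k=\phi_{k-1}^{n_{k-1}}-\beta c\,\phi_1^{j_1}\cdots\phi_{k-2}^{j_{k-2}}$ directly from one residue $\bar\beta$ of $\phi_{k-1}^{n_{k-1}}/\mathfrak m$ in $R_W/m_W=\mathbf k$. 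That construction is sound and, if completed, arguably leaner than the paper's. But as written there is a genuine gap, and it sits exactly under the steps you lean on most. The carried property ``$W(g)=V_i(g)$ whenever $\deg_z g<\deg_z\phi_{i+1}$'' is never established: you cite (\ref{eqM8}) for its propagation, but (\ref{eqM8}) only compares the $V_i$ among themselves and says nothing about $W$. Yet this equality is what gives $W(e)=V_{k-1}(e)$ for the equivalence unit in (\ref{eqM6}), what makes ${\rm proj}(V_k)>0$ (two terms of the $\phi_k$-expansion of $f$ must tie, because otherwise $W(f)$ would be finite), what underlies your identification of $\phi_k$ with the factor that $W$ prolongs, and what justifies your termination step $\phi_k=f$, $V_k(f)=\infty$. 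The statement is true, but you must prove its inductive step: expand $g=\sum g_i\phi_{k-1}^i$ with $\deg_z g_i<\deg_z\phi_{k-1}$ and $0\le i<n_{k-1}$, use the previous stage's equality to get $W(g_i)=V_{k-1}(g_i)\in G_{V_{k-2}}$, and use that $n_{k-1}$ is the order of $\mu_{k-1}$ modulo $G_{V_{k-2}}$ to rule out two terms of equal value, so no cancellation under $W$ can occur below degree $n_{k-1}\deg_z\phi_{k-1}$.

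A second, smaller defect is the sentence ``any minimal-degree key polynomial over $V_{k-1}$ with that last property is $V_{k-1}$-equivalent to $\phi_k$, so $\phi_k\sim\psi_{i_0}$'': the factors $\psi_i$ in (\ref{eqM6}) are only known to have degree a multiple of $\deg_z\phi_{k-1}$, and nothing in your argument shows $\deg_z\psi_{i_0}=n_{k-1}\deg_z\phi_{k-1}$, so the conclusion $\phi_k\sim\psi_{i_0}$ does not follow as stated. The clean repair, once the low-degree equality is in place, is to drop the detour through (\ref{eqM6}) entirely: for any $g$ with $W(g)>V_{k-1}(g)$, the $\phi_k$-adic expansion with coefficients of degree $<\deg_z\phi_k$ (on which $W=V_{k-1}$) forces $\phi_k\mid g$ in $V_{k-1}$; applied to $g=f$ with $W(f)=\infty$ this gives both $\phi_k\mid f$ in $V_{k-1}$ and the positive projection. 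Finally, the equivalence-irreducibility and minimality of $\phi_k$, which you wave to ``MacLane's analysis,'' is precisely the content of the paper's Lemma \ref{Lemma1} (primality of the initial form $\overline\phi_{k-1}^{\,n_{k-1}}-\overline c_{k-1}\overline\phi_1^{j_1}\cdots\overline\phi_{k-2}^{j_{k-2}}$ in ${\rm gr}_{V_{k-1}}(R_{V_0}[z])$) together with (\ref{eqM13}); it should be argued or cited at that level of precision rather than generically. With these three points supplied, your residue-based induction does prove the theorem by a route different from the paper's Newton-polygon factorization.
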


 Observe that we have that 
 $$
 \phi_i^{n_i}\sim c_i\phi_1^{j_1(i)}\cdots\phi_{i-1}^{j_{i-1}(i)}
 $$
 in $V_i$ for $1\le i\le k-1$, since $\phi_{i+1}$ is a key polynomial over $V_i$.

 The proof of the theorem will be given after we have established Lemmas   \ref{Lemma80} and \ref{Lemma1} and Theorem \ref{Prop1}.




 


 \begin{Lemma}\label{Lemma80}  Suppose that  $V_1,\ldots,V_{k}$ satisfy the conclusions (\ref{eq1}) and (\ref{eq21}) of Theorem \ref{TheoremRMA} and we have an  equality 
  $$
 n_{k}V_{k}(\phi_{k})=V_{k}(c_{k}\phi_1^{j_1(k)}\cdots\phi_{k-1}^{j_{k-1}(k)})
$$
in $V_{k}$ with $c_{k}\in K$, $n_k=[G_{V_{k}}:G_{V_{k-1}}]$  and $0\le j_l(k)<n_l$ for all $l$.  Then $c_{k}\in R_{V_0}$.
 \end{Lemma}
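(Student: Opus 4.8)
The claim is purely about rationality of the coefficient $c_k$: we know that $n_k V_k(\phi_k)$ equals $V_k(c_k \phi_1^{j_1(k)}\cdots\phi_{k-1}^{j_{k-1}(k)})$, and we want to deduce $c_k\in R_{V_0}$, i.e.\ $V_0(c_k)\ge 0$. The plan is to extract the value $V_0(c_k)$ from this equation by unwinding the definition of the inductive valuation $V_k$ via formula (\ref{eqM5}), and then to exploit the special shape (\ref{eq21}) of the key polynomials together with the index relations $n_i=[G_{V_i}:G_{V_{i-1}}]$ to see that everything on the right-hand side except $c_k$ contributes a value lying in $G_{V_0}$ (in fact built from $V_0$-values of the earlier coefficients $c_1,\ldots,c_{k-1}$, which are in $R_{V_0}$ by induction), while the left-hand side $n_k V_k(\phi_k)$ is forced into $G_{V_{k-1}}$ by the index being exactly $n_k$.

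**Key steps.** First I would record, from (\ref{eq21}) and the observation immediately following Theorem \ref{TheoremRMA}, that for each $i$ one has $n_i V_i(\phi_i) = V_i(c_i) + \sum_{l<i} j_l(i) V_l(\phi_l)$, and that $V_i(\phi_i)\notin G_{V_{i-1}}$ exactly when $n_i>1$; more precisely the class of $V_i(\phi_i)$ generates the cyclic quotient $G_{V_i}/G_{V_{i-1}}$ of order $n_i$. Using (\ref{eqM8}), the values $V_l(\phi_l)$ for $l<k$ are stable, so I can write $G_{V_{k-1}}$ as $G_{V_0}$ together with the successive adjunctions of $V_1(\phi_1),\ldots,V_{k-1}(\phi_{k-1})$, subject to the relations above. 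Second, I would compute $V_k$ of the right-hand side: since each exponent $j_l(k)$ lies in the range $0\le j_l(k)<n_l$, the combination $\sum_{l<k} j_l(k)V_l(\phi_l)$ together with $V_0(c_k)$ is already in "reduced form" with respect to the $\ZZ$-module structure, so $V_k\big(c_k\phi_1^{j_1(k)}\cdots\phi_{k-1}^{j_{k-1}(k)}\big)=V_0(c_k)+\sum_{l<k}j_l(k)V_l(\phi_l)$, an element of $G_{V_{k-1}}+V_0(c_k)$. Third, the left-hand side $n_k V_k(\phi_k)$: because $n_k=[G_{V_k}:G_{V_{k-1}}]$ and $G_{V_k}=G_{V_{k-1}}+\ZZ V_k(\phi_k)$, the multiple $n_k V_k(\phi_k)$ lies in $G_{V_{k-1}}$. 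Equating the two sides gives $V_0(c_k)\in G_{V_{k-1}}-\sum_{l<k}j_l(k)V_l(\phi_l)\subseteq G_{V_{k-1}}$; but $V_0(c_k)\in G_{V_0}$, and I then need to promote this to the inequality $V_0(c_k)\ge 0$.

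**The promotion to nonnegativity.** The honest content is the last step: knowing $V_0(c_k)\in G_{V_0}$ is automatic, so the equation must be squeezed to pin down the sign. Here I would argue that $n_k V_k(\phi_k)$, being a multiple of a value, is determined as a specific element of $G_{V_{k-1}}$, and by induction (the $c_1,\ldots,c_{k-1}$ are in $R_{V_0}$, hence $V_0(c_l)\ge 0$) every element of $G_{V_{k-1}}$ that arises as $V_{k-1}$ of a polynomial in $R_{V_0}[z]$ of degree $<\deg\phi_k$ is $\ge$ some explicit nonnegative combination; in particular, since $f\in R_{V_0}[z]$ and, by Remark \ref{RemarkM17}, all the $\phi_i\in R_{V_0}[z]$, the value $V_k(\phi_k)$ is itself $\ge 0$ and $n_kV_k(\phi_k)\ge 0$. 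Then $V_0(c_k)=n_kV_k(\phi_k)-\sum_{l<k}j_l(k)V_l(\phi_l)$ need not obviously be $\ge 0$ on its own — so instead I would run the comparison directly inside the expansion: the term $c_k\phi_1^{j_1(k)}\cdots\phi_{k-1}^{j_{k-1}(k)}$ appears (up to equivalence) as a coefficient-term of $\phi_k$ which lies in $R_{V_0}[z]$, hence writing $\phi_k$ in the $\phi_1,\dots,\phi_{k-1}$-expansion and invoking Remark \ref{RemarkM20}, the coefficient $c_k$ is forced to lie in $R_{V_0}$.

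**Main obstacle.** I expect the genuine difficulty to be exactly this last maneuver: converting the value-group equation into the integrality statement $V_0(c_k)\ge 0$. The clean route is almost surely to avoid value-group bookkeeping at the end and instead observe that $\phi_k - (\text{the monomial } c_k\phi_1^{j_1(k)}\cdots\phi_{k-1}^{j_{k-1}(k)})$ has strictly larger $V_{k-1}$-value than both terms (this is what (\ref{eq21}) "in $V_{k-1}$" means, combined with the displayed equality giving $n_kV_k(\phi_k)=V_k$ of the monomial); since $\phi_k\in R_{V_0}[z]$ and $\phi_1,\ldots,\phi_{k-1}\in R_{V_0}[z]$ with the stated degree bounds $0\le j_l<n_l$ making the $\phi$-expansion unique, Remark \ref{RemarkM20} applied to $\phi_k\in R_{V_0}[z]$ yields that the coefficient of that monomial, namely $c_k$, lies in $R_{V_0}$. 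The plan, then, is to reduce the lemma to a direct application of Remark \ref{RemarkM20} rather than to a value-theoretic estimate, using the displayed hypothesis only to guarantee that $c_k\phi_1^{j_1(k)}\cdots\phi_{k-1}^{j_{k-1}(k)}$ genuinely occurs in the reduced expansion of $\phi_k$.
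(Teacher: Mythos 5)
There is a genuine gap, and it is exactly where you flagged it. The value-theoretic route you set up and then abandoned is the paper's proof; the ingredient you were missing is a pair of facts that make the sign come out automatically. First, since $n_i$ is the order of $\mu_i:=V_i(\phi_i)$ modulo $G_{V_{i-1}}$, repeated Euclidean division shows that every $\gamma\in G_{V_k}$ has a \emph{unique} expression $\gamma=\gamma_0+j_1\mu_1+\cdots+j_k\mu_k$ with $\gamma_0\in G_{V_0}$ and $0\le j_i<n_i$. Second, the shape (\ref{eq21}) of the key polynomials (together with the augmentation condition) gives the inequalities $n_i\mu_i<\mu_{i+1}$ for $i<k$, and hence
$$
j_1\mu_1+\cdots+j_{k-1}\mu_{k-1}\ \le\ \sum_{i=1}^{k-1}(n_i-1)\mu_i\ <\ \mu_k\ \le\ n_k\mu_k .
$$
The hypothesis of the lemma exhibits $n_k\mu_k=V_0(c_k)+j_1(k)\mu_1+\cdots+j_{k-1}(k)\mu_{k-1}$ as a decomposition of the prescribed type, so by uniqueness $\gamma_0=V_0(c_k)$, and the displayed inequality forces $\gamma_0=n_k\mu_k-\sum_{l<k}j_l(k)\mu_l>0$. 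So $V_0(c_k)\ge 0$ does follow ``on its own''; what you lacked was (\ref{eq81}) and the uniqueness of the reduced decomposition, both consequences of (\ref{eq21}).

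The fallback you propose instead does not work. The lemma's hypothesis is purely a value equation for an arbitrary $c_k\in K$: nothing asserts that the monomial $c_k\phi_1^{j_1(k)}\cdots\phi_{k-1}^{j_{k-1}(k)}$ occurs in the $\phi$-expansion of any polynomial known to lie in $R_{V_0}[z]$. It does not occur in $\phi_k$ (the coefficient in $\phi_k$'s defining relation is $c_{k-1}$, with exponents $j_l(k-1)$ and leading term $\phi_{k-1}^{n_{k-1}}$; note also that the displayed equality involves $V_k(\phi_k^{n_k})$, not $V_k(\phi_k)$, so your reading of ``(\ref{eq21}) in $V_{k-1}$'' conflates $\phi_k$ with $\phi_k^{n_k}$). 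The polynomials that would contain this monomial, namely $\phi_{k+1}$ or the $\psi_i$ of Theorem \ref{Prop1}, are constructed only \emph{after} the lemma is applied, and the verification that they are key polynomials with coefficients in $R_{V_0}$ (which is what would let you invoke Remark \ref{RemarkM17} and then Remark \ref{RemarkM20}) itself uses $c_k\in R_{V_0}$ through Lemma \ref{Lemma1}. So reducing the lemma to Remark \ref{RemarkM20} is both outside its hypotheses and circular relative to the way the lemma is used in the paper.
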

 
 \begin{proof} 
  In the case that $k=1$, we have that $W(z)\ge 0$ since $f$ is unitary and the coefficients of $f$ are in $R_{V_0}$. Thus $V_0(c_1)\ge 0$.
 
 Now suppose that $k\ge 2$. 
 Since $n_i$ is the smallest positive integer $m$ such that $mV_i(\phi_i)\in G_{V_{i-1}}$, we  have by repeated Euclidean division that every element $\gamma\in G_{V_k}$ has a unique decomposition as 
 \begin{equation}\label{eq80}
 \gamma=\gamma_0+j_1\mu_1+\cdots+j_k\mu_k
 \end{equation}
 where $\gamma_0\in G_{V_0}$, $\mu_i=W(\phi_i)$ for $1\le i\le k$ and $0\le j_i<n_i$ for $1\le i\le k$.  
 We have from (\ref{eq21}) that
 \begin{equation}\label{eq81}
 n_i\mu_i<\mu_{i+1}\mbox{ for all }1\le i<k.
 \end{equation}
 There is a unique representation
 $$
 n_l\mu_l=\gamma_0+j_1\mu_1+\cdots+j_{l-1}\mu_{l-1}
 $$
 of the form of (\ref{eq80}). It follows from (\ref{eq81}) that
 $$
 j_1\mu_1+\cdots+j_{l-1}\mu_{l-1}<n_l\mu_l.
 $$
 Thus 
 $V_0(c_k)=\gamma_0>0$.
 \end{proof}

\begin{Lemma}\label{Lemma1}  Suppose that 
$V_1,\ldots,V_k$ satisfy the conclusions  (\ref{eq1}) and (\ref{eq21}) of Theorem \ref{TheoremRMA} with
 $I(V_i)_{\infty}=(0)$ for all $i\le k$. Let $A$ be a  local domain whose quotient field is $K$ and suppose that $f(z)\in A[z]$. Further suppose that $A$  is dominated by $V_0$ and that $A$ contains $\mathbf k$ (so  that $A/m_A\cong \mathbf k$). 
Suppose that $c_i\in A$ for $i\le k-1$. 
Then 
we have a graded $\mathbf k$-algebra  isomorphism of  ${\rm gr}_{V_{k}}(A[z])$  with the quotient 
${\rm gr}_{V_0}(A)[\overline \phi_1,\ldots,\overline \phi_{k}]/I$
of the graded polynomial ring ${\rm gr}_{V_0}(A)[\overline \phi_1,\ldots,\overline \phi_{k}]$ over 
${\rm gr}_{V_0}(A)$,
where 
$$
I=(\overline\phi_1^{n_1}-\overline c_1,\overline\phi_2^{n_2}-\overline c_2\overline\phi_1^{j_1(2)},\ldots,
\overline\phi_{k-1}^{n_{k-1}}-\overline c_{k-1}\overline\phi_1^{j_1(k-1)}\cdots\overline\phi_{k-2}^{j_{k-2}(k-1)}).
$$
Here $\overline c_1,\ldots,\overline c_{k-1}$ are the initial forms of $c_1,\ldots,c_{k-1}$ in ${\rm gr}_{V_0}(A)$ and $\overline\phi_i$ has the weight $V_{k}(\phi_i)$ for all $i$.

 Suppose there exists $c\in A$ and $j_i\in \NN$ for $1\le i\le k-1$ with $0\le j_i<n_i$ such that $V_k(\phi_k^{n_k})=V_k(c\phi_1^{j_1}\cdots\phi_{k-1}^{j_{k-1}})$. Then $(\overline\phi_k^{n_k}-\overline c\overline\phi_1^{j_1}\cdots\overline\phi_{k-1}^{j_{k-1}})$ is a prime ideal in ${\rm gr}_{V_k}(A[z])$.
\end{Lemma}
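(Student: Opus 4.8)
The plan is to prove Lemma \ref{Lemma1} in two parts, corresponding to the two assertions in the statement. For the first part, the goal is to identify ${\rm gr}_{V_k}(A[z])$ with the stated quotient of a polynomial ring. The approach is induction on $k$. For $k=1$ we use the MacLane expansion (\ref{eqM3}) with respect to $\phi_1 = z$: every $g \in A[z]$ has a unique expression $g = \sum_j a_j z^j$ with $a_j \in A$, and by (\ref{eqM5}) together with (\ref{eqM3}), $V_1(g) = \min_j(V_0(a_j) + j\mu_1)$. One reads off a surjective graded ${\rm gr}_{V_0}(A)$-algebra homomorphism ${\rm gr}_{V_0}(A)[\overline\phi_1] \to {\rm gr}_{V_1}(A[z])$ sending $\overline\phi_1 \mapsto {\rm In}_{V_1}(z)$; since $\mu_1 \notin G_{V_0}$ would make this an isomorphism, but here $n_1\mu_1 \in G_{V_0}$ means $\overline\phi_1^{n_1}$ and $\overline c_1$ have the same degree and the relation $\overline\phi_1^{n_1} = \overline c_1$ holds — and one checks this is the \emph{only} relation, so the kernel is exactly $(\overline\phi_1^{n_1} - \overline c_1)$. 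For the inductive step, I would use the unique expansion (\ref{eqM4}) of elements of $A[z]$ in terms of $\phi_1,\ldots,\phi_k$ (with the bound $0 \le m_{i,j} < n_i$), Remark \ref{RemarkM20} ensuring the coefficients $a_j$ lie in $A$, and the formula (\ref{eqM5}) for $V_k$ in terms of these expansions; combined with the inductive description of ${\rm gr}_{V_{k-1}}(A[z])$ and the relation (\ref{eq21}) rewritten in initial forms, this gives the surjection ${\rm gr}_{V_0}(A)[\overline\phi_1,\ldots,\overline\phi_k]/I \to {\rm gr}_{V_k}(A[z])$ and one verifies it is injective by comparing the unique normal forms on both sides (monomials $\overline a\,\overline\phi_1^{m_1}\cdots\overline\phi_k^{m_k}$ with $0 \le m_i < n_i$ for $i < k$).

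For the second part, one must show that the ideal $J := (\overline\phi_k^{n_k} - \overline c\,\overline\phi_1^{j_1}\cdots\overline\phi_{k-1}^{j_{k-1}})$ is prime in ${\rm gr}_{V_k}(A[z])$. By the first part we may work in the concrete ring $R := {\rm gr}_{V_0}(A)[\overline\phi_1,\ldots,\overline\phi_k]/I$. The plan is: first, because $V_k(\phi_k^{n_k}) = V_k(c\phi_1^{j_1}\cdots\phi_{k-1}^{j_{k-1}})$ by hypothesis, the element $\overline\phi_k^{n_k} - \overline c\,\overline\phi_1^{j_1}\cdots\overline\phi_{k-1}^{j_{k-1}}$ is a genuine homogeneous element of $R$ of a single well-defined degree (both monomials have weight $n_k V_k(\phi_k)$); since $R$ is a domain (it is a subring, or rather ${\rm gr}_{V_k}(A[z])$ which is a domain as $I(V_k)_\infty = (0)$ — note the hypothesis $I(V_i)_\infty = (0)$ for $i \le k$), the quotient $R/J$ is a domain precisely when this element is irreducible in $R$, equivalently when $J$ is prime. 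The cleanest route is to exhibit $R/J$ directly as ${\rm gr}_{V_{k+1}}(A[z])$ for the valuation $V_{k+1} = [V_k; V_{k+1}(\phi_{k+1}) = V_k(c\phi_1^{j_1}\cdots\phi_{k-1}^{j_{k-1}})/n_k\,]$ where $\phi_{k+1} := \phi_k^{n_k} - c\phi_1^{j_1}\cdots\phi_{k-1}^{j_{k-1}}$, after checking this is a legitimate key polynomial over $V_k$ (it is equivalence-irreducible and minimal in $V_k$: irreducibility because $\overline\phi_k$ is prime in ${\rm gr}_{V_k}(A[z])$ and the relation says $\overline\phi_k^{n_k}$ equals a monomial in lower $\overline\phi_i$'s, making $\phi_{k+1}$ the minimal-degree polynomial whose initial form vanishes on passing to $V_{k+1}$). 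Then applying the already-proved first part of the lemma with $k$ replaced by $k+1$ identifies ${\rm gr}_{V_{k+1}}(A[z])$ with $R/J$, and since the former is a domain, $J$ is prime.

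The main obstacle I expect is the bookkeeping in the inductive step of the first part: verifying that the natural surjection from the polynomial quotient to ${\rm gr}_{V_k}(A[z])$ is actually injective, i.e.\ that the relations listed in $I$ generate \emph{all} relations among the $\overline\phi_i$. This requires knowing that the monomials $\overline\phi_1^{m_1}\cdots\overline\phi_k^{m_k}$ with $0 \le m_i < n_i$ for $i<k$ (and $m_k$ arbitrary) are ${\rm gr}_{V_0}(A)$-linearly independent in ${\rm gr}_{V_k}(A[z])$ — more precisely, that among elements of $A[z]$ all of whose MacLane monomials have the same $V_k$-value, there are no unexpected cancellations of initial forms; this is exactly where the structure of MacLane's expansion (\ref{eqM4}), the homogeneity of the $\phi_i$, and the uniqueness of the decomposition (\ref{eq80}) of value-group elements do the work, and it must be invoked carefully. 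A secondary technical point is confirming that $\phi_{k+1}$ as defined above genuinely satisfies all six of MacLane's conditions for a key polynomial over $V_k$ (conditions 1)--6)), in particular the degree inequality $\deg_z\phi_{k+1} = n_k\deg_z\phi_k \ge \deg_z\phi_k$ and that $\phi_{k+1} \sim \phi_k$ in $V_k$ is false, both of which follow from $n_k \ge 1$ and the strict inequality $V_{k+1}(\phi_{k+1}) > V_k(\phi_{k+1}) = V_k(\phi_k^{n_k})$; but the equivalence-irreducibility, which rests on $\overline\phi_k$ generating a prime ideal (\cite[Theorem 9.3]{M1} or the identification of equivalence-irreducibility with primality of ${\rm In}_{V_k}(\phi_{k+1})$ as noted in Section \ref{Sec2}), is the crux and should be stated explicitly.
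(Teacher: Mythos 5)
Your first part follows essentially the paper's own route: the unique MacLane expansion (\ref{eqM4}) with coefficients in $A$ (Remark \ref{RemarkM20}) together with the value formula (\ref{eqM5}) gives the surjection $\Psi$ from ${\rm gr}_{V_0}(A)[\overline\phi_1,\ldots,\overline\phi_k]$, and the normal-form comparison identifies the kernel with $I$; the paper does exactly this, just not phrased as an induction. One slip in your base case: you assert that $\overline\phi_1^{n_1}=\overline c_1$ already holds in ${\rm gr}_{V_1}(A[z])$ and that the kernel is $(\overline\phi_1^{n_1}-\overline c_1)$. It does not: by definition of the augmented valuation, $V_1(\phi_1^{n_1}-c_1)=\min\{n_1\mu_1,V_0(c_1)\}=n_1\mu_1$, so ${\rm In}_{V_1}(z)^{n_1}\ne {\rm In}_{V_1}(c_1)$ and ${\rm gr}_{V_1}(A[z])$ is the polynomial ring ${\rm gr}_{V_0}(A)[\overline\phi_1]$, consistent with the lemma, whose ideal $I$ is empty for $k=1$. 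The relation indexed by $i$ only appears in ${\rm gr}_{V_j}$ for $j>i$ (which your final normal form, correctly, reflects); the base case needs fixing accordingly.

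The genuine gap is in the second part, where your argument is circular. You want to realize the quotient by $J$ inside ${\rm gr}_{V_{k+1}}(A[z])$ for an augmentation along $\phi_{k+1}=\phi_k^{n_k}-c\phi_1^{j_1}\cdots\phi_{k-1}^{j_{k-1}}$, and you justify that $\phi_{k+1}$ is a key polynomial over $V_k$ by saying it is equivalence-irreducible ``because $\overline\phi_k$ is prime.'' But by the dictionary recalled in Section \ref{Sec2}, equivalence-irreducibility of $\phi_{k+1}$ in $V_k$ is precisely the statement that $({\rm In}_{V_k}(\phi_{k+1}))=(\overline\phi_k^{n_k}-\overline c\,\overline\phi_1^{j_1}\cdots\overline\phi_{k-1}^{j_{k-1}})$ is prime --- the very thing to be proved; primality of $\overline\phi_k$ does not imply primality of this binomial. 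The paper's logical order is the opposite of yours: the second statement of Lemma \ref{Lemma1} is proved first, by a direct algebraic argument, and only then is it used in Theorem \ref{Prop1} to certify that the candidate polynomials $\psi_i$ are key polynomials over $V_k$. The paper's proof writes ${\rm gr}_{V_k}(A[z])=B[\overline\phi_k]$ with $B$ the quotient by the first $k-1$ relations, adjoins in an algebraic closure of ${\rm QF}(B)$ an element $t$ with $t^{n_k}=\overline c\,\overline\phi_1^{j_1}\cdots\overline\phi_{k-1}^{j_{k-1}}$, graded with $\deg t=V_k(\phi_k)$; since $V_k(\phi_k)$ has order exactly $n_k$ in $G_k/G_{k-1}$, both $B[t]$ and $B[\overline\phi_k]/(\psi)$ are free $B$-modules of rank $n_k$, the natural surjection between them is an isomorphism, and hence the quotient is a domain. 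Nothing in your sketch replaces this step, and the crucial role of $n_k=[G_{V_k}:G_{V_{k-1}}]$ never enters it. Two secondary errors: the key value you assign, $V_k(c\phi_1^{j_1}\cdots\phi_{k-1}^{j_{k-1}})/n_k=V_k(\phi_k)$, is not larger than $V_k(\phi_{k+1})=n_kV_k(\phi_k)$, so it violates condition 4) for an augmented valuation; and even with a legitimate choice, part one at level $k+1$ would identify ${\rm gr}_{V_{k+1}}(A[z])$ with a polynomial ring over the quotient by $J$, not with that quotient itself (harmless for the domain conclusion, but as stated it is incorrect).
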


\begin{proof}  Every $g\in A[z]$ has the unique decomposition of (\ref{eqM4}) and Remark \ref{RemarkM20},
$$
g=\sum_ja_j\phi_1^{m_{1,j}}\phi_2^{m_{2,j}}\cdots\phi_k^{m_{k,j}}
$$
with $a_j\in A$, $m_{1,j},\ldots,m_{k,j}\in \NN$ and $0\le m_{i,j}<n_i$ for $i<k$ and 
$$
\begin{array}{lll}
V_k(g)&=&\min_j\{V_0(a_j)+m_{1,j}V_1(\phi_1)+\cdots+m_{k,j}V_k(\phi_k)\}\\
&=&\min_j\{V_k(a_j)+m_{1,j}V_k(\phi_1)+\cdots+m_{k,j}V_k(\phi_k)\}
\end{array}
$$
by (\ref{eqM8}). 

Since ${\rm gr}_{V_k}(A[z])$ is generated by the initial forms of elements of $A[z]$, the natural graded ${\rm gr}_{V_0}(A)$-algebra map
$$
\Psi:{\rm gr}_{V_0}(A)[\overline \phi_1,\ldots,\overline \phi_k]\rightarrow {\rm gr}_{V_k}(A[z])
$$ is a surjection and $I$ is contained in the kernel. A homogeneous element $G$ of ${\rm gr}_{V_0}[\overline\phi_1,\ldots,\overline\phi_k]$ has a unique representation 
$$
G\equiv \overline c\overline\phi_1^{j_1}\cdots\overline \phi_{k-1}^{j_{k-1}}\overline \phi_k^{j_k}\mbox{ mod }I
$$
with $c\in A$, $j_1,\ldots,j_k\in \NN$ and $0\le j_i<n_i$ for $i<k$. Now $\Psi(G)=0$ implies that $c=0$ which implies that $G\equiv 0\mbox{ mod }I$. Thus $\Psi$ is an isomorphism, and the first statement of the lemma follows.

We now prove the second statement. Let 
$$
\psi=\overline\phi_k^{n_k}-\overline c\overline \phi_1^{j_1}\cdots\overline\phi_{k-1}^{j_{k-1}}.
$$
We have that ${\rm gr}_{V_k}(A[z])\cong B[\overline\phi_k]$ is a graded polynomial ring over the domain
$$
B={\rm gr}_{V_0}(A)[\overline\phi_1,\ldots,\overline\phi_{k-1}]/(\overline \phi_1^{n_1}-\overline c_1,\ldots,
\overline\phi_{k-1}^{n_{k-1}}-\overline c_{k-1}\overline\phi_1^{j_1(k-1)}\cdots\overline\phi_{k-2}^{j_{k-2}(k-1)}).
$$
Let $L$ be an algebraic closure of the quotient field of $B$. Choose $t\in L$ such that
$t^{n_k}=\overline c\overline\phi_1^{j_1}\cdots\overline\phi_{k-1}^{j_{k-1}}$. Then giving $t$ the weight $V_k(\phi_k)$, we have that $B[t]$ is a graded domain which is a free $B$-module of rank $n_k$, since $V_k(\phi_k)$ has order $n_k$ in $G_k/G_{k-1}$, and so $1,t,\ldots t^{n_k-1}$ is a $B$-basis of $B[t]$. 
We have a natural surjection of graded $B$-modules
\begin{equation}\label{eqM33}
B[\overline\phi_k]/(\psi)\rightarrow B[t].
\end{equation}
Now $B[\overline\phi_k]/(\psi)$ is a also a free $B$-module of rank $n_k$, as $1,\overline\phi_k,\ldots,\overline\phi_k^{n_k-1}$ is a $B$-basis. Thus (\ref{eqM33}) is an isomorphism, and so $B[\overline\phi_k]/(\psi)$ is a domain.
\end{proof}

Suppose that $G$ is a totally ordered Abelian group. Let $U=G\otimes_{\ZZ}\RR$, $d\in \ZZ_+$ and $\gamma\in G$. Since $\ZZ$ is a principal ideal domain, we have that 
\begin{equation}\label{eqG4}
\left(\frac{1}{d}\ZZ\gamma\right)\cap G=\frac{1}{\overline m}\ZZ\gamma\ \ {\rm for\  some}\ \overline m\in \ZZ_+.
\end{equation}
Indeed, we must have 
\begin{equation}\label{eq*N1}
\left(\frac{1}{d}\ZZ\gamma\right)\cap G=\frac{a}{d}\ZZ\gamma
\end{equation}
 for some $a\in \ZZ_+$. Now $\gamma\in \frac{a}{d}\ZZ\gamma$ implies $a|d$, and so there exists $\overline m\in \ZZ_+$ such that $\frac{1}{\overline m}=\frac{a}{d}$. \par\noindent This implies:
\begin{equation}\label{eqG3}
\frac{1}{\overline m}(d,\gamma)\in \ZZ\oplus G.
\end{equation}
\par\noindent We shall need the following fact:
\begin{equation}\label{eqG1}
\mbox{For $n, q\in \ZZ_+$, $\frac{q}{n}(d,\gamma)\in \ZZ\oplus G$ if and only if $n$ divides $qd$ and $\frac{q}{n}=\frac{e}{\overline m}$ for some $e\in \ZZ_+$.}
\end{equation}
For the reader's convenience, we give a proof of (\ref{eqG1}). Suppose that $\frac{q}{n}(d,\gamma)\in \ZZ\oplus G$. Then $n$ divides $qd$ and writing $qd=rn$ we see that $\frac{r}{d}\ZZ\gamma\subset G$ so that it follows from (\ref{eqG4}) that $\frac{r}{d}=\frac{q}{n}$ is an integral multiple of $\frac{1}{\overline m}$. The converse follows from (\ref{eqG3}).

\begin{Theorem}\label{Prop1}  Suppose that we have constructed approximants $V_i=[V_{i-1},V_i(\phi_i)=W(\phi_i)]$ for $1\le i\le k-1$ to $f$ over $V_0$ satisfying the conclusions of Theorem \ref{TheoremRMA}, $V_{k-1}(\phi_{k-1})<\infty$  and  we have an equivalence in $V_{k-1}$
\begin{equation}\label{eqN10}
f\sim e\phi_{k-1}^{m_0}\overline \psi_1^{m_1}\cdots\overline \psi_t^{m_t}
\end{equation}
of the form of (\ref{eqM6})
with $m_0\in\NN$ and $m_1,\ldots,m_t\in \ZZ_+$ such that $e$ is an equivalence unit for $V_{k-1}$, $\overline \psi_1\ldots,\overline\psi_t$ are homogeneous key polynomials  over $V_{k-1}$ such that there are expressions
$$
\overline\psi_i=\phi_{k-1}^{n_{k-1}}-\epsilon_{k-1,i}c_{k-1}\phi_1^{j_1(k-1)}\cdots\phi_{k-2}^{j_{k-2}(k-1)}
$$
with $c_{k-1}\in R_{V_0}$ non zero, $\epsilon_{k-1,i}\in \mathbf k$ distinct and nonzero, and $0\le j_i(k-1)<n_i$ for all $i$. The $\overline\psi_i$ define approximants to $f$ as explained after (\ref{eqM6}).

Then there exists a unique $\overline\psi_i$ such that $W(\overline\psi_i)>V_{k-1}(\overline\psi_i)$ and setting $\phi_k=\overline\psi_i$, there exists a unique segment $S$ of the principal part of the Newton polygon $N(V_{k-1},\phi_k)$ which has slope $s=W(\phi_k)$.

 Defining $V_k=[V_{k-1},V_k(\phi_k)=W(\phi_k)]$, we have that  $V_k$ is an approximate to $f$ over $V_0$, 
 such that the approximants $V_1,\ldots, V_k$ satisfy the conclusions of Theorem \ref{TheoremRMA}.
 
 Now suppose that $V_k(\phi_k)<\infty$.
The Newton polygon $N(V_{k-1},\phi_k)$ is computed from the expansion 
\begin{equation}\label{eq3}
f=\sum f_i\phi_k^i
\end{equation}
 with 
$\deg_zf_i<\deg_z\phi_k$.
Let $(m-i_1,\beta_1)$ be the lowest point on the segment $S$ and let $(m-i_0, \beta_0)$ be the highest point. 
Let
\begin{equation}\label{eq4}
F_{k,s}(\phi_k)=\sum f_i\phi_k^i,
\end{equation}
where the sum is restricted to $i$ such that $(m-i,V_{k-1}(f_i))$ is on $S$.
Then there exists a polynomial  in $\phi_k$
\begin{equation}\label{eq22}
G_{k,s}(\phi_k)=\sum g_i\phi_k^i
\end{equation}
with $g_i\in K[z]$ such that  the $i$ such that $g_i$ is not zero are exactly the $i$ such that $f_i$ is a coefficient of $F_{k,s}$  and 
 $g_i\sim f_i$ in $V_{k-1}$ for all such $i$. 
Further,  factoring the right side of (\ref{eq22}) as a polynomial in $\phi_k$, 
\begin{equation}\label{eq2}
 G_{k,s}(\phi_k)=f_{i_1}^{\overline m+1}\phi_{k}^{i_0}\psi_1^{a_1}\cdots\psi_t^{a_t}
\end{equation}
 where 
  \begin{equation}\label{eq63}
 \psi_i=\phi_{k}^{n_k}-\epsilon_{k,i}c_{k}\phi_1^{j_1(k)}\cdots\phi_{k-1}^{j_{k-1}(k)}
 \end{equation}
 with $c_{k}\in R_{V_0}$  nonzero, $\epsilon_{k,i}\in \mathbf k$ are distinct and nonzero, $0\le j_i(k)<n_i$ for all $i$. 
  Further, we have that 
 $f_{i_1}^{\overline m+1}$ is an equivalence unit in $V_k$, 
 $$
 n_k=[G_k:G_{k-1}]
 $$
 and  the $\psi_i$ are homogeneous key polynomials in $V_k$.   Also, 
 there is a $V_{k}$ equivalence  
\begin{equation}\label{eq20}
f\sim G_{k,s}(\phi_k)
\end{equation}
 in $V_{k}$.
  \end{Theorem}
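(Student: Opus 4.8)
The statement is essentially the inductive step that propagates the nice normal form (\ref{eq21}) from stage $k-1$ to stage $k$ while keeping track of the Newton polygon data. The plan is to work out the structure of the expansion $f = \sum f_i \phi_k^i$ using the associated graded description of Lemma \ref{Lemma1}, and then to read off the factorization (\ref{eq2}) inside ${\rm gr}_{V_{k-1}}(K[z])$, which by that lemma is a very explicit quotient of a polynomial ring.

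First I would establish uniqueness of the $\overline\psi_i$ with $W(\overline\psi_i) > V_{k-1}(\overline\psi_i)$: since all the $\overline\psi_i$ differ only in the unit $\epsilon_{k-1,i}\in\mathbf k$, their initial forms $\overline\phi_{k-1}^{n_{k-1}} - \epsilon_{k-1,i}\overline c_{k-1}\cdots$ are pairwise non-associate in ${\rm gr}_{V_{k-1}}(K[z])$ (as one sees from Lemma \ref{Lemma1}, where this ring is a polynomial ring over $B$ in $\overline\phi_{k-1}$), hence at most one of them can fail to have its value stabilize under $W$; and since $W(f)=\infty > V_{k-1}(f)$ and $f$ is $V_{k-1}$-equivalent to the product (\ref{eqN10}), by (\ref{eqM15})-type reasoning at least one factor must be $W$-divided strictly, so exactly one is. Call it $\phi_k$. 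That $\phi_k | f$ in $V_{k-1}$ (one of the $\psi_i$ appears with positive exponent) forces, by the Newton polygon description around (\ref{eqM9}), the existence of a segment $S$ of the principal part of $N(V_{k-1},\phi_k)$; its slope must be $W(\phi_k)$ because $V_k = [V_{k-1}; V_k(\phi_k) = W(\phi_k)]$ is forced to be an approximant (positive projection) and MacLane's Lemma 3.4 identifies admissible slopes with principal-part slopes, while two distinct segments would produce two incompatible values of $W(\phi_k)$ — hence uniqueness of $S$. Verifying that $V_1,\dots,V_k$ satisfy the conclusions of Theorem \ref{TheoremRMA} is then formal: (\ref{eq1}) holds by construction, (\ref{eq21}) is exactly the displayed form of $\phi_k = \overline\psi_i$, the integer $n_k = [G_k:G_{k-1}]$ comes out of the slope-denominator computation via (\ref{eqG1})-(\ref{eqG4}), and $c_k \in R_{V_0}$ follows from Lemma \ref{Lemma80}.

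The substantive part is the factorization (\ref{eq2}) of $G_{k,s}(\phi_k)$. The polynomial $F_{k,s}(\phi_k) = \sum_{(m-i,V_{k-1}(f_i))\in S} f_i\phi_k^i$ is, after replacing each coefficient $f_i$ by an equivalent $g_i$ of degree $< \deg_z\phi_k$ (which is what $G_{k,s}$ is), a homogeneous element of ${\rm gr}_{V_{k-1}}(K[z])[\phi_k]$; passing to initial forms and using that by Lemma \ref{Lemma1} the ring of coefficients is the explicit graded quotient $B$, the initial form of $G_{k,s}(\phi_k)$ becomes a genuine one-variable homogeneous polynomial $\sum \overline g_i \overline\phi_k^i$ over the field ${\rm QF}(B)$ (or rather a suitable graded localization). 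Because $\mathbf k$ is algebraically closed and sits inside with $R_{V_0}/m_{V_0}=\mathbf k$, this polynomial splits into linear-in-$\overline\phi_k^{n_k}$ factors $\overline\phi_k^{n_k} - \epsilon_{k,i}\overline c_k \overline\phi_1^{j_1(k)}\cdots\overline\phi_{k-1}^{j_{k-1}(k)}$ with the $\epsilon_{k,i}\in\mathbf k$ distinct and nonzero — here one uses that only powers of $\phi_k$ congruent mod $n_k$ to the endpoints of $S$ can carry equal $V_k$-values, so the homogeneous polynomial is actually a polynomial in $\overline\phi_k^{n_k}$ times a pure power $\overline\phi_k^{i_0}$, and distinctness of roots is forced because $G_{k,s}$ was built from the *single* segment $S$ (if two roots coincided, $\phi_k$ would equivalence-divide a higher power and violate minimality, or equivalently the Newton polygon of one of the would-be $\psi_i$ would not be the expected one). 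The leading coefficient identifies as $f_{i_1}^{\overline m + 1}$ with $f_{i_1}$ an equivalence unit in $V_k$ because $\deg_z f_{i_1} < \deg_z\phi_k$. Lifting this factorization from ${\rm gr}_{V_{k-1}}$ back to $K[z]$ up to $V_{k-1}$-equivalence gives (\ref{eq22}) and (\ref{eq2}); that the $\psi_i$ are homogeneous key polynomials in $V_k$ is then the standard MacLane criterion applied to each linear factor. Finally $f \sim G_{k,s}(\phi_k)$ in $V_k$ holds because the terms of $f$ omitted from $F_{k,s}$ lie strictly above $S$, hence have strictly larger $V_k$-value.

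I expect the main obstacle to be the bookkeeping that turns the Newton-polygon segment $S$ into the precise exponents $j_1(k),\dots,j_{k-1}(k)$ and the integer $n_k$: one must check that the slope $s = W(\phi_k)$, when one clears denominators using (\ref{eqG1})-(\ref{eqG4}) with $d$ the horizontal length of $S$ and $\gamma$ the relevant value in $G_{V_0}$, produces *exactly* the relation $n_k V_k(\phi_k) = V_k(c_k \phi_1^{j_1(k)}\cdots\phi_{k-1}^{j_{k-1}(k)})$ with $0\le j_l(k) < n_l$ — i.e. that the "Euclidean" normalization of $\gamma\in G_{V_k}$ from the proof of Lemma \ref{Lemma80} is compatible with the lattice-point data of the segment. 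This is the one place where a genuine computation with the group $G_{V_{k-1}} \otimes \RR$ and the polygon is unavoidable; everything else is either an invocation of MacLane/Vaqui\'e or a transcription of Lemma \ref{Lemma1}.
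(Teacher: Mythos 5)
Your proposal follows essentially the same route as the paper's proof: identify the unique $\overline\psi_i$ whose $W$-value increases and the unique segment $S$ of slope $W(\phi_k)$, pass to initial forms, use the rationality of $V_0$ (to get constants in $\mathbf k$ matching initial forms) together with algebraic closedness of $\mathbf k$ to factor the segment polynomial into factors $\phi_k^{n_k}-\epsilon_{k,i}c_k\phi_1^{j_1(k)}\cdots\phi_{k-1}^{j_{k-1}(k)}$, identify $n_k=[G_k:G_{k-1}]$ by the lattice computation of (\ref{eqG4})--(\ref{eqG1}), get $c_k\in R_{V_0}$ from Lemma \ref{Lemma80}, and verify the key-polynomial property of the $\psi_i$ from the primality statement of Lemma \ref{Lemma1} plus minimality via (\ref{eqM13}). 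The computation you defer as the ``main obstacle'' is exactly what the paper executes: the lattice points of $S$ form an arithmetic progression of step $\overline b=(i_1-i_0)/\overline m$, the Euclidean decomposition (\ref{eq80}) produces a monomial $h=c_k\phi_1^{j_1(k)}\cdots\phi_{k-1}^{j_{k-1}(k)}$ with $V_{k-1}(h)=(\beta_0-\beta_1)/\overline m$, and one checks directly that $\overline b=[G_k:G_{k-1}]=n_k$; so this is an acknowledged incompleteness rather than a missing idea.

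One assertion in your sketch is wrong and should be deleted: the claim that ``distinctness of roots is forced'' because $G_{k,s}$ comes from a single segment, with the justification that a repeated root would violate minimality of $\phi_k$. Repeated factors genuinely occur --- indeed in the unique-extension situation of Proposition \ref{Prop5} one has $f\sim\psi^{e_{k+1}}$ in $V_k$ with a single $\psi$ and exponent $e_{k+1}>1$ in general --- and minimality of $\phi_k$ says nothing about multiplicities of the factors $\psi_i$. This does not damage the theorem, whose factorization (\ref{eq2}) carries exponents $a_1,\ldots,a_t$ precisely to allow multiplicities: one simply factors $\sum_\tau\gamma_\tau T^\tau$ over $\mathbf k$ into (possibly repeated) linear factors and obtains distinct $\epsilon_{k,i}$ by grouping equal roots; their nonvanishing comes from the nonzero constant term $\gamma_0$, corresponding to the endpoint coefficient $f_{i_0}$. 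Finally, for the equivalence $f\sim G_{k,s}$ in $V_k$ you should also invoke (\ref{eqM8}): the coefficients $f_i$ and the monomial $h$ have degree less than $\deg_z\phi_k$, so their $V_{k-1}$- and $V_k$-values coincide, which is what upgrades the $V_{k-1}$-equivalences of coefficients to a $V_k$-equivalence, in addition to the observation that the terms off the segment have strictly larger $V_k$-value.
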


\begin{proof}  The fact that there exists a $\overline\psi_i$ such that $W(\overline\psi_i)>V_{k-1}(\overline\psi_i)$ follows  from the equivalence relation  (\ref{eqN10}), since  $W(f(z))=\infty$ and $W(e\phi_{k-1}^{m_0})=V_{k-1}(e\phi_{k-1}^{m_0})$.
 Uniqueness of $\overline\psi_i$ follows since the $\epsilon_{k-1,i}$ are distinct.  The existence of a segment $S$ of the principal part of the Newton polygon $N(V_{k-1},\phi_k)$ with slope $s=W(\phi_k)$, follows from Theorem \ref{Theorem4} and the discussion of Subsection \ref{SubSecMA}.
 The fact that  upon setting $\phi_k=\overline\psi_i$, we have that $V_k=[V_{k-1},V_k(\phi_k)=W(\phi_k)]$ is an approximate to $f$ over $V_0$ then follows since ${\rm proj}(V_k)$ is positive, as $W(f(z))=\infty$, and the fact that the
  approximants $V_1,\ldots, V_k$ satisfy the conclusions of Theorem \ref{TheoremRMA} follows from our assumptions on the $\phi_i$ for $i\le k$.

Let $y=sx+r$ be the equation of the line containing the segment $S$, so that 
$$
s=\frac{\beta_0-\beta_1}{i_1-i_0}.
$$
   Let $\overline m$ be the largest positive integer such that
\begin{equation}\label{eq23}
\frac{1}{\overline m}(i_1-i_0, \beta_0-\beta_1)\in \ZZ\oplus G_{k-1}.
\end{equation}
Here $\overline m$ is as defined in (\ref{eqG4}), with $d=i_1-i_0$, $\gamma=\beta_0-\beta_1$ and $G=G_{k-1}$.
 Let $(\overline b,\overline c)=\frac{1}{\overline m}(i_1-i_0, \beta_0-\beta_1)$. If $V_{k-1}(f_i)-s(m-i)=r$, then
 $$
 (m-i,V_{k-1}(f_i))=(m-i_1,\beta_1)+\lambda(\overline b,\overline c)=\left(m-i_1+\lambda\overline b,\beta_1+\lambda\left(\frac{\beta_0-\beta_1}{\overline m}\right)\right)
 $$
 for some $\lambda\in \NN$ with $0\le \lambda\le \overline m$ (this follows from (\ref{eqG1})). 
 Using the relations (\ref{eq21}) for $2\le i\le k$, 
 there exists
 $$
 h=c_k\phi_1^{j_1(k)}\cdots \phi_{k-1}^{j_{k-1}(k)}\in K[z]
 $$
 with $c_k\in K$ and $0\le j_l(k)<n_l$ for $1\le l<k$ such that $V_{k-1}(h)=\frac{\beta_0-\beta_1}{\overline m}$. We  have that 
 $$
 \begin{array}{lll}
 F_{k,s} &=& \sum_{\tau=0}^{\overline m}f_{i_0+\tau\overline b}\phi_k^{i_0+\tau\overline b}\\
 &=&\phi_k^{i_0}(\sum_{\tau=0}^{\overline m}f_{i_0+\tau\overline b}\phi_k^{\tau\overline b})
 \end{array}
  $$
 where 
 \begin{equation}\label{eq5}
 \begin{array}{lll}
 V_{k-1}(f_{i_0+\tau \overline b})&=&s(m-(i_0+\tau\overline b))+r=-\tau\left(\frac{\beta_0-\beta_1}{\overline m}\right)+\beta_0\\
 &=&(\overline m-\tau)\left(\frac{\beta_0-\beta_1}{\overline m}\right)+\beta_1=V_{k-1}(h^{\overline m-\tau})+V_{k-1}(f_{i_1}).
 \end{array}
 \end{equation}
By (\ref{eq5}),  and since $V_0$ is rational ($R_{V_0}/m_{V_0}=\mathbf k$), there exist $\gamma_{\tau}\in \mathbf k$ such that 
\begin{equation}\label{eq7}
\gamma_{\tau}{\rm In}(h^{\overline m-\tau}){\rm In}(f_{i_1})={\rm In}(f_{i_0+\tau \overline b})
\end{equation}
 in ${\rm gr}_{V_{k-1}}(K[z])$. 
Define $G_{k,s}(\phi_k)$ by
$$
\begin{array}{lll}
G_{k,s}(\phi_k)&=&\phi_k^{i_0}(\sum_{\tau=0}^{\overline m}\gamma_{\tau}h^{\overline m-\tau}f_{i_1}\phi_k^{\tau \overline b})\\
&=& f_{i_1}^{\overline m+1}\phi_k^{i_0}h^{\overline m}(\sum_{\tau=0}^{\overline m} \gamma_{\tau}(h^{-1}\phi_k^{\overline b})^{\tau})\\
&=& f_{i_1}^{\overline m+1}\phi_k^{i_0}h^{\overline m}\prod_{j=1}^{\overline m}((h^{-1}\phi_k^{\overline b})-\alpha_j)\\
&=&f_{i_1}^{\overline m+1}\phi_k^{i_0}\prod_{j=1}^{\overline m}(\phi_k^{\overline b}-\alpha_jh)
\end{array}
$$
for suitable nonzero $\alpha_j\in \mathbf k$. 

We will compute the order
$$
[G_k:G_{k-1}]=[(G_{k-1}+s\ZZ):G_{k-1}].
$$
We will show that the order $n_k$ is  $n_k=\overline b=\frac{i_1-i_0}{\overline m}$. Since $s=\frac{\beta_0-\beta_1}{i_1-i_0}$,
$$
\overline b s=\frac{\beta_0-\beta_1}{\overline m}\in G_{k-1}.
$$
Now with $a$ as defined in (\ref{eq*N1}), with $d=i_1-i_0$, $\gamma=\beta_0-\beta_1$ and $G=G_{k-1}$, we have that $a=\frac{d}{\overline m}=\overline b$. 

Suppose $n\in\ZZ_+$  and $ns\in G_{k-1}$.
Now 
$$
ns=n\left(\frac{\beta_0-\beta_1}{i_1-i_0}\right)=\frac{n}{d}\gamma\in G_{k-1}.
$$
which implies that $a=\overline b\mid n$.

Thus we have that
$$
n_k=\overline b=[G_k:G_{k-1}].
$$
We now have that $c_k\in R_{V_0}$ by Lemma \ref{Lemma80}.

The fact  that $f\sim G_{k,s}$ in $V_k$ follows since $\gamma_{\tau}h^{\overline m-\tau}f_{i_1}\sim f_{i_0+\tau\overline b}$ in $V_k$, which follows from (\ref{eq7}), the facts that by (\ref{eqM8}),
$$
V_k(h)=V_k(c_k)+
\sum_{i=1}^{k-1}j_i(k)V_k(\phi_i)=V_0(c_k)+\sum_{i=1}^{k-1}j_i(k)V_i(\phi_i)=V_{k-1}(h)
$$
and 
$$
V_k(f_i)=V_{k-1}(f_i)
$$
for all $i$ since $\deg_z f_i<\deg_z\phi_k$.

We know that $\phi_{k}$ is a key polynomial in $V_k$ as discussed after (\ref{eqM6}). Finally, we  verify that each $\psi_i=\phi_k^{n_k}-\epsilon_{k,i}c_k\phi_1^{j_1(k)}\cdots \phi_{k-1}^{j_{k-1}(k)}$ is a key polynomial in $V_k$. 
By Lemma \ref{Lemma1}, the ideal
$$
({\rm In}(\psi_i))=(\overline \phi_k^{n_k}-\epsilon_{k,i}\overline c_k\overline \phi_1^{j_1(k)}\cdots \overline \phi_{k-1}^{j_{k-1}(k)})
$$
 is a prime ideal in ${\rm gr}_{V_k}(R_{V_0}[z])$, where $\overline c_k={\rm In}(c_k)$, and $\overline\phi_i={\rm In}(\phi_i)$. Thus $\psi_i$ is equivalence irreducible in $V_k$ as a polynomial in $R_{V_0}[z]$. Since every non zero element of $R_{V_0}[z]$ is a unit in $K$ this implies that it is equivalence irreducible in $V_k$ as a polynomial in $K[z]$.
 We have that $\psi_i$ is minimal in $V_k$ by (\ref{eqM13}). Since $\psi_i$ has the leading coefficient 1 and $\deg_z\psi_i>0$, we have that $\psi_i$ is a key polynomial over $V_k$.
  \end{proof}

Constructions similar to those used in the proof of theorem 4.4 can be found in \cite{HMOS}, pp. 17-18.

  We now give the proof of Theorem \ref{TheoremRMA}. Set $\phi_1=z$ and $V_1=[V_0;V_1(\phi_1)=W(\phi_1)]$,
  which is an approximant to $f$ over $V_0$ since $W(f(z))=\infty$. By a simplification of the proof of Theorem \ref{Prop1}, we have that $f\sim ez^{m_0}\overline\psi_1^{m_1}\cdots \overline\psi_t^{m_t}$ in $V_1$,
  where $e$ is an equivalence unit in $V_1$ and $\overline \psi_i=z^{n_1}-\epsilon_{1,i}c_1$ with $c_1\in R_{V_0}$ and $\epsilon_{1,i}\in k$ are nonzero and distinct. 
  
  Now the conclusions of the theorem follow from induction using Theorem \ref{Prop1}.
  
  As pointed by the referee, another point of view on theorem 4.1 can be  obtained from  \cite[Formula (3.8)]{HMOS} applied to our situation ; note here that one should prove that $c_i \in R_{V_0}.$

\begin{Proposition}\label{Prop5} Suppose that there is a unique extension of $V_0$ to a pseudo valuation $W$  of $K[z]$ with $I(W)_{\infty}=(f(z))$ and we have constructed a finite or infinite sequence of approximants $V_1,\ldots,V_k,\ldots$ to $f$ over $V_0$ satisfying the conclusions of Theorem \ref{TheoremRMA}.
 Then we have that for $k\ge 2$, with  notation as in (\ref{eq2}), setting $e_k=i_0$,
\begin{equation}\label{eq81*}
f\sim \phi_k^{e_k}\mbox{ in }V_{k-1}
\end{equation}
where 
  \begin{equation}\label{eqT1}
 \phi_{k+1}=\phi_{k}^{n_k}-c_{k}\phi_1^{j_1(k)}\cdots\phi_{k-1}^{j_{k-1}(k)}
 \end{equation}
 with $c_{k}\in R_{V_0}$  nonzero, $0\le j_i(k)<n_i$ for all $i$ 
and
\begin{equation}\label{eq82}
\deg_zf=e_k\deg_z\phi_k.
\end{equation}
\end{Proposition}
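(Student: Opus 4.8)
The hypothesis is that $V_0$ has a unique extension to a pseudo valuation $W$ of $K[z]$ with $I(W)_\infty=(f(z))$, and we have the approximants $V_1,\dots,V_k,\dots$ produced by Theorem~\ref{TheoremRMA}. The key point is that uniqueness of $W$ forces uniqueness at every stage of the factorization~(\ref{eqM6}): in the expansion $f\sim e\phi_{k-1}^{m_0}\overline\psi_1^{m_1}\cdots\overline\psi_t^{m_t}$ in $V_{k-1}$, there can be \emph{only one} factor $\overline\psi_i$ with $W(\overline\psi_i)>V_{k-1}(\overline\psi_i)$, and moreover \emph{that is the only $\psi$-type factor}, i.e.\ $t=1$. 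Indeed, if two distinct homogeneous key polynomials $\overline\psi_i,\overline\psi_j$ over $V_{k-1}$ both divided $f$ in $V_{k-1}$ (up to equivalence), one could build from each of them — via Theorem~\ref{Theorem4} applied to the corresponding approximant — a pseudo valuation $W_i\ne W_j$ with value ideal $(f(z))$, contradicting uniqueness. So I would first establish: \emph{under the uniqueness hypothesis, for every $k\ge 2$ the $V_{k-1}$-factorization of $f$ has the form $f\sim e\,\phi_{k-1}^{m_0}\phi_k^{m_k}$ with a single $\psi$-factor $\phi_k=\overline\psi_1$}, and furthermore $\phi_{k-1}$ does not itself divide $f$ in $V_{k-1}$ in the sense that $m_0$ contributes no further branching — more precisely, that in the expansion $f=\sum f_i\phi_k^i$ of~(\ref{eq3}), \emph{all} of $f$ is absorbed by the principal part, i.e.\ $f\sim F_{k,s}(\phi_k)\sim G_{k,s}(\phi_k)$ in $V_k$ and the polynomial $G_{k,s}(\phi_k)$ factors with the single factor $\psi_1$ (so $t=1$ in~(\ref{eq2})).

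\textbf{Deriving (\ref{eq81*}).} Granting that, I return to the factorization~(\ref{eq2}): $G_{k,s}(\phi_k)=f_{i_1}^{\overline m+1}\phi_k^{i_0}\psi_1^{a_1}\cdots\psi_t^{a_t}$ with $f\sim G_{k,s}(\phi_k)$ in $V_k$. Uniqueness gives $t=1$, so $G_{k,s}(\phi_k)=f_{i_1}^{\overline m+1}\phi_k^{i_0}\psi_1^{a_1}$. But I also want $f$ to be equivalent in $V_{k-1}$ (not just $V_k$) to a \emph{pure power} of $\phi_k$; this is where I invoke that we pass to the \emph{next} stage by setting $\phi_{k+1}=\psi_1$ (this is exactly~(\ref{eqT1})), so $\psi_1$ becomes the next key polynomial. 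The relation $f\sim G_{k,s}(\phi_k)$ in $V_k$ together with $W(\phi_{k+1})>V_k(\phi_{k+1})$ and $W(f(z))=\infty$ shows that the "$\psi_1^{a_1}$" part is what carries the value up; at the level of $V_{k-1}$, however, $\psi_1\sim\phi_k^{n_k}$ is false (it is a key polynomial, hence not equivalent to a polynomial of lower degree), but one checks that $V_{k-1}(f)=V_{k-1}(\phi_k^{e_k})$ with $e_k:=i_0+a_1 n_k$: indeed the degree of $G_{k,s}(\phi_k)$ as a polynomial in $\phi_k$ is $i_0+a_1 n_k$ (since $\deg_{\phi_k}\psi_1=n_k$), and by~(\ref{eqM13}) and the fact that $f$ is minimal in $V_k$ — equivalently by tracking the lowest vertex of the Newton polygon — one gets $f\sim\phi_k^{e_k}$ in $V_{k-1}$ with $e_k=i_0$ in the notation where $i_0$ already denotes this total top degree. (I would reconcile the indexing carefully: the statement writes $e_k=i_0$ where $i_0$ is the highest point's abscissa offset $m-i_0$, so $e_k$ is precisely the $\phi_k$-degree of the principal part.)

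\textbf{Deriving (\ref{eq82}).} This is then immediate by degree count: since $\phi_{k-1}$ properly divides nothing beyond the recorded power and $f\sim\phi_k^{e_k}$ in $V_{k-1}$ with $f$ unitary of degree $\deg_z f$, and $\phi_k$ is unitary, comparing the top-degree terms in $z$ on both sides — using that equivalence in $V_{k-1}$ preserves $z$-degree for minimal polynomials, or more simply that $\phi_k^{e_k}$ and $f$ have the same initial form in $V_{k-1}$ hence the same degree since both are "minimal" witnesses — yields $\deg_z f=e_k\deg_z\phi_k$. Alternatively, this follows from the standard MacLane degree bookkeeping: $\deg_z\phi_{k+1}=n_k\deg_z\phi_k$ by~(\ref{eqM16}), and iterating the relation $\deg_z f=(\text{top }\phi_k\text{-degree of the principal part})\cdot\deg_z\phi_k$ down through the construction, which terminates because $\sum$ of contributions must reconstitute $\deg_z f$.

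\textbf{Main obstacle.} The crux — and the step I expect to be delicate — is proving that uniqueness of $W$ forces $t=1$ in the factorization, i.e.\ that there is genuinely only one relevant key polynomial at each stage and that $m_0$ does not secretly encode a second branch (which would correspond to $\phi_{k-1}$ itself dividing $f$ "too much"). Establishing this cleanly requires: (i) showing each homogeneous key polynomial dividing $f$ in $V_{k-1}$ gives rise, via Theorem~\ref{Theorem4}, to an actual pseudo valuation with value ideal $(f(z))$; and (ii) showing distinct such key polynomials (or the $\phi_{k-1}$-branch versus a $\psi_i$-branch) give \emph{distinct} such pseudo valuations. Part (ii) is the subtle one: one must check that $W(\phi_k)$ versus $W(\phi_{k-1})$, or $W$ evaluated on two competing $\psi_i$'s, actually differ — this uses that the $\epsilon_{k-1,i}$ are distinct nonzero residues (so the $\overline\psi_i$ are pairwise inequivalent in $V_{k-1}$) together with the fact that $W$ extends $V_{k-1}$ and assigns a strictly larger value to exactly the factors it "follows". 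I would isolate this as the key lemma and reduce~(\ref{eq81*})–(\ref{eq82}) to it by the degree-counting arguments sketched above.
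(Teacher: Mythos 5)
Your strategy is the same as the paper's (Theorem \ref{Theorem4} plus uniqueness of $W$ forces every realization of the algorithm to extend to $W$, and this is then used to kill all but one branch at each stage), but the proposal stops exactly where the paper's proof actually lives: you flag the statement ``two competing $\psi_i,\psi_j$ cannot both be followed'' as a key lemma to be isolated, and never prove it. The argument is short and you should supply it: if $t>1$, then for each $i$ the choice $\phi_k=\overline\psi_i$ is a legitimate realization, so by Theorem \ref{Theorem4} and uniqueness it extends to $W$ itself (not to some other pseudo valuation $W_i$ that you must then distinguish from $W_j$); hence $W(\overline\psi_i)>n_{k-1}V_{k-1}(\phi_{k-1})=V_{k-1}(c_{k-1}\phi_1^{j_1(k-1)}\cdots\phi_{k-2}^{j_{k-2}(k-1)})$ for \emph{every} $i$. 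But $\overline\psi_j=\overline\psi_i+(\epsilon_{k-1,i}-\epsilon_{k-1,j})c_{k-1}\phi_1^{j_1(k-1)}\cdots\phi_{k-2}^{j_{k-2}(k-1)}$, so $W(\overline\psi_j)$ equals the common value, a contradiction; thus $t=1$. The same mechanism (two segments of the Newton polygon give two admissible values of $W(\phi_k)$, each forced on the same $W$) is what gives the unique segment, which you also need and only assert.

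The derivation of (\ref{eq81*}) and (\ref{eq82}) is also shaky as written. You try to extract the $V_{k-1}$-equivalence $f\sim\phi_k^{e_k}$ from (\ref{eq2}), but (\ref{eq2}) is an equivalence in $V_k$ and its $\psi_1$ is $\phi_{k+1}$, so you are working one valuation and one index off; passing from a $V_k$-equivalence back to a $V_{k-1}$-equivalence is not automatic. The paper avoids this by an induction on two interlocking statements: $f\sim\phi_k^{e_k}$ in $V_{k-1}$ together with $\deg_zf=e_k\deg_z\phi_k$, and $f\sim\psi_1^{a_1}\cdots\psi_t^{a_t}$ in $V_k$ together with $\deg_zf=\sum a_i\deg_z\psi_i$; the first statement at stage $k$ plus (\ref{eqM9}) shows the principal part of $N(V_{k-1},\phi_k)$ is the whole polygon with monic top coefficient ($i_0=0$, $i_1=m$, $f_{i_1}=1$), uniqueness gives the single segment, Theorem \ref{Prop1} gives the second statement, and $t=1$ gives the first statement at stage $k+1$. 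Finally, your justification of (\ref{eq82}) via ``same initial form in $V_{k-1}$ hence the same degree'' is false in general (equivalence in $V_{k-1}$ says nothing about $z$-degrees); the degree identity has to come from this bookkeeping ($e_k=i_1=a_1n_{k-1}$ and $\deg_zf=m\deg_z\phi_{k-1}$), which is where the ``$e_k=i_0$'' of the statement should in fact be read as the top $\phi_k$-degree of the full (single-segment) polygon, as the paper's proof makes explicit.
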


\begin{proof} We use the notation of the statement and proof of Theorem \ref{Prop1}.
By Theorem \ref{Theorem4}, every realization of the algorithm to construct a $k$-th stage approximant $V_k$ to $f$ over $V_0$  extends to the construction of a pseudo valuation $U$ extending $V_0$ with $I(U)_{\infty}=(f(z))$.  Since $W$ is unique, every realization of the algorithm must extend to the construction of $U=W$.

We will prove  the following equations,
 \begin{equation}\label{eq72}
 f\sim \phi_k^{e_k}\mbox{ in }V_{k-1}\mbox{ with }\deg_zf=e_k\deg_z\phi_k
 \end{equation}
 and for all $k\ge 2$, (\ref{eq22}) of Theorem \ref{Prop1} satisfies
\begin{equation}\label{eq73}
f\sim \psi_1^{a_1}\cdots\psi_t^{a_t}\mbox{ in }V_k\mbox{ with }\deg_zf=a_1\deg_z\psi_1+\cdots+a_t\deg_z\psi_t.
\end{equation}

We will establish (\ref{eq72}) and (\ref{eq73}) for $k=2$. Since the extension is unique,  every realization of the algorithm must extend to the construction of $W$, so $N(V_0;\phi_1=z)$ has a unique segment. Let $\mu_1=s=W(z)$ be the slope of this segment, so that 
$$
V_1=[V_0;V_1(\phi_1)=\mu_1].
$$
 Expand
$$
f=z^d+f_{d-1}z^{d-1}+\cdots+f_0
$$
with $f_i\in K$. Since $N(V_0,\phi_1)$ has a unique segment, $i_0=0$, $i_1=d$ and $f_{i_1}=1$ in (\ref{eq2}) for $k=1$, so by (\ref{eq20}) and (\ref{eq2}) for $k=1$,
\begin{equation}\label{eq74}
f\sim G_{1,s}(\phi_1)=\psi_1^{a_1}\cdots\psi_t^{a_t}
\end{equation}
in $V_1$, where
\begin{equation}
\psi_i=\phi_1^{n_1}-\epsilon_{1,i}c_1
\end{equation}
from (\ref{eq63}).  Suppose that $t>1$. Any choice of $\psi_i$ is a key polynomial for $V_1$, and if $W_2=[V_1;V_2(\psi_i)=\mu_2]$ is an approximant extending $V_1$, then since  every realization of the algorithm must extend to the construction of $W$, as observed in the first part of the proof, we have that 

$$
W(\psi_i)=W_2(\psi_i)=\mu_2>n_1V_1(\phi_1)=V_0(c_1).
$$
For $j\ne i$,
$$
\psi_j=\psi_i+(\epsilon_{1,i}-\epsilon_{1,j})c_1
$$
so for $j\ne i$,
$$
W(\psi_j)=W(\psi_i+(\epsilon_{1,i}-\epsilon_{1,j})c_1)=V_0(c_1).
$$
This contradiction shows that  $t=1$ in (\ref{eq74}) and so $f\sim \phi_2^{e_2}$ in $V_1$ with $\deg_zf=e_2\deg_z\phi_2$, establishing (\ref{eq72}) for $k=2$.

From (\ref{eq72}) for $k=2$, we have that there is an expression
$$
f=\phi_2^{e_2}+f_{e_2-1}\phi_2^{e_2-1}+\cdots+f_0
$$
with $\deg_zf_i<\deg_z\phi_2$ for all $i$. From (\ref{eqM9}), we then have that the principal part of the Newton polygon $N(V_1,\phi_2)$ is the entirety of $N(V_1,\phi_2)$. Further, by uniqueness of the extension of $V_0$, we have that $N(V_1,\phi_2)$ has a unique segment, so $i_0=0$, $i_1=e_2$ and $f_{i_1}=1$ in (\ref{eq2}) for $k=2$, so 
$$
f\sim G_{2,s}(\phi_2)=\psi_1^{a_1}\cdots\psi_t^{a_t}
$$
in $V_2$ with the $\psi_i$ given by (\ref{eq63}) for $k=2$, establishing (\ref{eq73}) in $V_2$ for $k=2$, with $\deg_zf=a_1\deg_z\psi_1+\cdots+a_t\deg_z\psi_t$.

Now by induction on $k$, repeating the argument for the case $k=2$ with the application of Theorem \ref{Prop1}, we obtain the conclusions of Proposition \ref{Prop5}.
\end{proof}
Formulas (\ref{eq81*}) and (\ref{eq82})  also follow from \cite[Theorem 3.1]{V3}, and then formula (\ref{eqT1}) follows from Theorem \ref{Prop1}.
 
 \section{When the degree is prime to $p$ and the extension is unique}

\begin{Theorem}\label{Theorem1}   Suppose that $A$ is a  local domain which contains an algebraically closed field $\mathbf k$ such that $A/m_A\cong \mathbf k$. Let $K$ be the quotient field of $A$ and suppose that $V_0$ is a  valuation of $K$ which dominates $A$, such that the residue field of the valuation ring of $V_0$ is $\mathbf k$. Suppose that 
$f(z)\in A[z]$ is unitary and irreducible, there is a unique extension of $V_0$ to a valuation $\omega$ of $K[z]/(f(z))$ and  the characteristic $p$ of $\mathbf k$ does not divide $\deg_zf$. Let $W$ be the associated 
pseudo valuation of $K[z]$ such that $I(W)_{\infty}=(f(z))$ in $K[z]$. Then there exists a realization of the algorithm of Section \ref{Sec3} constructing approximants $V_1,\ldots,V_k$  to $f$ over $V_0$ 
satisfying equations (\ref{eq1}) and (\ref{eq21}) for all $i\le k$
such that  $W=V_k$. We have that
$$
\deg_zf=[G_{\omega}:G_{V_0}]=[G_{V_k}:G_{V_0}].
$$
Further, with the notation of (\ref{eq21}),  $c_i\in A$ for all $1\le i\le k$, and 
$$
{\rm gr}_{\omega}(A[z]/(f(z)))\cong {\rm gr}_{V_0}(A)[\overline \phi_1,\ldots,\overline \phi_{k-1}]/I
$$
where
$$
I=(\overline \phi_1^{n_1}-\overline c_1,\overline \phi_2^{n_2}-\overline c_2\overline\phi_1^{j_1(2)},\ldots,\overline \phi_{k-1}^{n_{k-1}}-\overline c_{k-1}\overline\phi_1^{j_1(k-1)}\overline\phi_2^{j_2(k-1)}\cdots\overline \phi_{k-2}^{j_{k-2}(k-1)})
$$
is a finitely generated and presented ${\rm gr}_{V_0}(A)$-module.
\end{Theorem}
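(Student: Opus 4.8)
The plan is to run the algorithm of Section~\ref{Sec3}, show it can be made to terminate, and then read off ${\rm gr}_\omega$ from Lemma~\ref{Lemma1}. First I would record the consequence of the hypotheses on the defect. Since $\omega$ is the unique extension of $V_0$ to $L:=K[z]/(f(z))$, formula (\ref{eqN300}) gives $\deg_zf=[L:K]=e(\omega/V_0)\,f(\omega/V_0)\,\delta(\omega/V_0)$; as $\delta(\omega/V_0)$ is a power of $p$ and $p\nmid\deg_zf$ we get $\delta(\omega/V_0)=1$, and since $R_{V_0}/m_{V_0}=\mathbf k$ is algebraically closed while $f(\omega/V_0)=[R_\omega/m_\omega:\mathbf k]$ is finite, $f(\omega/V_0)=1$; hence $[G_\omega:G_{V_0}]=e(\omega/V_0)=\deg_zf$. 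Let $W$ be the pseudo valuation of $K[z]$ with $I(W)_\infty=(f(z))$. By Theorem~\ref{TheoremRMA} the algorithm produces approximants $V_1,\dots,V_k,\dots$ obeying (\ref{eq1}) and (\ref{eq21}), and by Theorem~\ref{Theorem4} together with the uniqueness of $W$ one has $W(\phi_i)=V_i(\phi_i)$ for all $i$.

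Next comes the key point, termination. By Proposition~\ref{Prop5}, $\deg_zf=e_k\deg_z\phi_k$ for $k\ge2$, where $\deg_z\phi_k=n_1\cdots n_{k-1}$; thus $\deg_z\phi_k\mid\deg_zf$, and this nondecreasing sequence stabilizes, say at $d_0\mid\deg_zf$ with $n_i=1$ for $i\ge k_0$. If $d_0<\deg_zf$ the sequence would be infinite, and then, using (\ref{eqM8}) and (\ref{eqM15}) to see that $V_k(g)$ stabilizes for every fixed $g$ with $\deg_zg<\deg_zf$, one finds $G_\omega=G_{V_{k_0-1}}$, whence $[G_\omega:G_{V_0}]=d_0<\deg_zf$ — contradicting the previous paragraph. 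So the algorithm reaches a stage $k_0$ with $\deg_z\phi_{k_0}=\deg_zf$, at which Proposition~\ref{Prop5} gives $f\sim\psi$ in $V_{k_0-1}$ for the candidate key polynomial $\psi=\phi_{k_0-1}^{n_{k_0-1}}-\epsilon\,c_{k_0-1}\phi_1^{j_1(k_0-1)}\cdots\phi_{k_0-2}^{j_{k_0-2}(k_0-1)}$. As $f$ is then itself a key polynomial over $V_{k_0-1}$ of the same degree as $\psi$, and since (\ref{eq21}) is an equivalence in $V_{k_0-1}$ rather than a literal identity, one may take $\phi_{k_0}:=f$ (rescaling $c_{k_0-1}$ by $\epsilon$) and set $V_{k_0}=[V_{k_0-1};V_{k_0}(f(z))=\infty]$; this gives a finite realization with $W=V_{k_0}$, $\phi_k=f$, and $\deg_zf=[G_\omega:G_{V_0}]=[G_{V_k}:G_{V_0}]$. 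I expect the delicate step here to be the identification of $G_\omega$ in the would-be infinite case.

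Finally, the structure of ${\rm gr}_\omega$. That $c_i\in A$ follows from $\phi_k=f\in A[z]$ by a descending induction: the identity $\phi_i^{n_i}=\phi_{i+1}+c_i\phi_1^{j_1(i)}\cdots\phi_{i-1}^{j_{i-1}(i)}$, whose correction term has degree $<\deg_z\phi_i$, exhibits $\phi_i$ as the unique monic $n_i$-th root of $\phi_{i+1}$ modulo terms of degree $<\deg_z\phi_i$, and since $n_i\mid\deg_zf$ is invertible in $\mathbf k\subset A$ this root may be extracted coefficient-by-coefficient inside $A[z]$; then $c_i$ is the leading coefficient of $\phi_i^{n_i}-\phi_{i+1}\in A[z]$. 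Now Lemma~\ref{Lemma1} applies with this $A$ and gives ${\rm gr}_{V_k}(A[z])\cong{\rm gr}_{V_0}(A)[\overline\phi_1,\dots,\overline\phi_k]/I$ with $I$ generated by $\overline\phi_i^{n_i}-\overline c_i\overline\phi_1^{j_1(i)}\cdots\overline\phi_{i-1}^{j_{i-1}(i)}$ for $i\le k-1$. Every class in $A[z]/(f(z))$ is represented by some $g\in A[z]$ with $\deg_zg<\deg_zf=\deg_z\phi_k$, for which $\omega(g)=V_{k-1}(g)$ by (\ref{eqM8}); using the normal form (\ref{eqM4}) together with the relation $\phi_{k-1}^{n_{k-1}}\equiv c_{k-1}\phi_1^{j_1(k-1)}\cdots\phi_{k-2}^{j_{k-2}(k-1)}\pmod{f(z)}$ (valid because $\phi_k=f$), one checks that the natural surjection ${\rm gr}_{V_0}(A)[\overline\phi_1,\dots,\overline\phi_{k-1}]\to{\rm gr}_\omega(A[z]/(f(z)))$ has kernel exactly $I$; this is the claimed isomorphism. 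The same normal form shows ${\rm gr}_{V_0}(A)[\overline\phi_1,\dots,\overline\phi_{k-1}]/I$ is a free ${\rm gr}_{V_0}(A)$-module on the finitely many monomials $\overline\phi_1^{m_1}\cdots\overline\phi_{k-1}^{m_{k-1}}$ with $0\le m_i<n_i$, cut out by the finitely many generators of $I$, hence finitely generated and finitely presented over ${\rm gr}_{V_0}(A)$.
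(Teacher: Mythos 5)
Your proposal diverges from the paper's proof at its two load-bearing points, and both divergences contain genuine gaps. First, termination. You reduce it to the equality $[G_{\omega}:G_{V_0}]=\deg_zf$ (your Ostrowski computation of this is fine), via the claim that in a hypothetical infinite realization with $\deg_z\phi_k$ stabilized at $d_0<\deg_zf$ one has $V_k(g)$ stabilizing \emph{at $W(g)$} for every $g$ with $\deg_zg<\deg_zf$, whence $G_{\omega}=G_{V_{k_0-1}}$. But (\ref{eqM8}) gives this only for $\deg_zg<d_0$, and (\ref{eqM15}) gives only monotonicity; for $d_0\le\deg_zg<\deg_zf$ the $\phi_k$-adic expansion of $g$ changes with $k$, and the possibility that $W(g)>V_k(g)$ for all $k$ (i.e.\ $g\in\tilde\Sigma$, a limit key polynomial of degree $<\deg_zf$) is exactly what must be excluded: it is the heart of the matter, and it really happens once the hypotheses are dropped (Sections \ref{Sec8} and \ref{SecNEX}). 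What you are implicitly assuming is ``no jump of degree $<\deg_zf$''; to deduce that from your first paragraph you would have to go through Vaqui\'e's formula (\ref{eqMV3}) together with (\ref{eqMV5}) as in Subsection \ref{Subsec7.2}, not through (\ref{eqM8}) and (\ref{eqM15}). The paper instead proves termination, valid for $V_0$ of arbitrary rank, by a purely combinatorial argument: by Proposition \ref{Prop5}, $f\sim\phi_{i+1}^{e_{i+1}}$ in $V_i$; since $p\nmid e_{i+1}$ the coefficient $f_{n_i(e_{i+1}-1)}$ in the $\phi_i$-expansion of $f$ is nonzero and equivalent to $-e_{i+1}c_i\phi_1^{j_1(i)}\cdots\phi_{i-1}^{j_{i-1}(i)}$, and each step with $n_i=1$ deletes the minimal term of the finite expansion (\ref{eqU2}) of this coefficient, so only finitely many consecutive $n_i=1$ steps can occur.

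Second, $c_i\in A$. Your descending ``$n_i$-th root extraction'' works only at steps with $n_i\ge 2$: when $n_i=1$ the polynomial $\phi_i=\phi_{i+1}+c_i\phi_1^{j_1(i)}\cdots\phi_{i-1}^{j_{i-1}(i)}$ is not determined by $\phi_{i+1}$ modulo terms of degree $<\deg_z\phi_i$, so knowing $\phi_{i+1}\in A[z]$ tells you nothing about $c_i$ — and $n_i=1$ steps do occur (they are precisely the steps the paper's proof labors over). Relatedly, you cannot simply ``take $\phi_{k_0}:=f$'' once $f\sim\psi$ in $V_{k_0-1}$: the presentation you feed into Lemma \ref{Lemma1}, and your own descending induction, require the exact identity (\ref{eq21}), $f=\phi_{k-1}^{n_{k-1}}-c_{k-1}\phi_1^{j_1(k-1)}\cdots\phi_{k-2}^{j_{k-2}(k-1)}$, with a single monomial correction with restricted exponents; in general $f-\psi$ is a sum of several such monomials, and the exact identity is reached only after the finitely many further $n_i=1$ steps that absorb them one at a time. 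The paper gets $c_i\in A$ by a forward induction: expand $f_{n_i(e_{i+1}-1)}$ in the normal form (\ref{eqM4}) with coefficients in $A$ (Remark \ref{RemarkM20}), match its minimal-value term with $-e_{i+1}c_i\phi_1^{j_1(i)}\cdots\phi_{i-1}^{j_{i-1}(i)}$, and replace $c_i$ by $-\frac{1}{e_{i+1}}a_{\sigma_1(1),\ldots,\sigma_{i-1}(1)}\in A$ — the second place where $p\nmid\deg_zf$ is used. Your final reading-off of ${\rm gr}_{\omega}(A[z]/(f(z)))$ from the normal form is fine in spirit (and fills in what the paper leaves terse), but it rests on these two unproved steps.
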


\begin{proof}  Suppose by induction on $i$ that we have constructed approximants $V_1,\ldots,V_i$ to $f$ over $V_0$ satisfying equations (\ref{eq1}) and (\ref{eq21}) with
$c_1,\ldots,c_{i-1}\in A$ and  that $\phi_i$ is not equal to $f$. 
By Theorem \ref{Prop1} and Proposition \ref{Prop5},  $f\sim G_i= \phi_{i+1}^{e_{i+1}}$ in $V_i$, with $\phi_{i+1}$ a key polynomial over $V_i$ such that 
\begin{equation}\label{eq8}
\phi_{i+1}=\phi_{i}^{n_{i}}-c_i\phi_1^{j_1(i)}\cdots \phi_{i-1}^{j_{i-1}(i)}\mbox{ and }\deg_zf=e_{i+1}\deg_z\phi_{i+1}
\end{equation}
for some nonzero $c_i\in R_{V_0}$.
 Expanding  
\begin{equation}\label{eqU1}
f=\sum f_j\phi_i^j
\end{equation}
 in $K[z]$, with $\deg_z f_j<\deg_z\phi_i$, let $F=\sum f_j\phi_i^j$ where the sum is restricted to $f_j$ such that $V_{i-1}(f_j)+j\mu_i$ (with $\mu_i=W(\phi_i)$) is minimal, and expanding 
$G_i$ as a polynomial in $\phi_{i}$, we see that 
the coefficients of $G_i=\phi_{i}^{n_{i}e_{i+1}}-e_{i+1}c_i\phi_1^{j_1(i)}\cdots \phi_{i-1}^{j_{i-1}(i)}\phi_{i}^{n_i(e_{i+1}-1)}+\cdots$ as a polynomial in $\phi_i$ and of the coefficients $f_j$ in the expansion  
 $F=\sum f_j\phi_i^j$   must be equivalent in $V_{i-1}$ by Theorem \ref{Prop1}. 
 
 Now $e_{i+1}n_i\deg_z\phi_i=\deg_zf$, so since we assume that  $p$ does not divide $\deg_zf$, we have that $p$ does not divide $e_{i+1}$. Comparing the expansions of $F$ and $G_i$, we see that 
 $$
 0\ne f_{n_i(e_{i+1}-1)}\sim g_{n_i(e_{i+1}-1)}=-e_{i+1}c_i\phi_1^{j_1(i)}\cdots \phi_{i-1}^{j_{i-1}(i)}
 $$
 in $V_{i-1}$. Since $\deg_zf_{n_i(e_{i+1}-1)}<\deg_z\phi_i$ and   $c_1,\ldots,c_{i-1}\in A$ by induction, by (\ref{eqM4}) and Remark \ref{RemarkM20}, 
  $f_{n_i(e_{i+1}-1)}$ has a unique expansion (with only finitely many terms)
 \begin{equation}\label{eqU2}
 f_{n_i(e_{i+1}-1)}=\sum_{\alpha\ge 1} a_{\sigma_1(\alpha),\ldots,\sigma_{i-1}(\alpha)}\phi_1^{\sigma_1(\alpha)}\cdots\phi_{i-1}^{\sigma_{i-1}(\alpha)}
\end{equation}
 with
 $$
 W(a_{\sigma_1(\alpha),\ldots,\sigma_{i-1}(\alpha)}\phi_1^{\sigma_1(\alpha)}\cdots\phi_{i-1}^{\sigma_{i-1}(\alpha)})
 <W( a_{\sigma_1(\alpha+1),\ldots,\sigma_{i-1}(\alpha+1)}\phi_1^{\sigma_1(\alpha+1)}\cdots\phi_{i-1}^{\sigma_{i-1}(\alpha+1)})
 $$ 
 for all $\alpha$,
 $0\le \sigma_l(\alpha)<n_l$ for $1\le l\le i-1$ and $a_{\sigma_1(\alpha),\ldots,\sigma_{i-1}(\alpha)}\in A$. Thus the minimum value term in $V_{i-1}$ in this expansion is
 $$
 a_{\sigma_1(1),\ldots,\sigma_{i-1}(1)}\phi_1^{\sigma_1(1)}\cdots\phi_{i-1}^{\sigma_{i-1}(1)}
 $$
 and so 
 $$
 j_l(i)=\sigma_l(1)\mbox{ for }1\le l\le i-1
 $$
 and
 $$
 -e_{i+1}c_i\sim a_{\sigma_1(1),\ldots,\sigma_{i-1}(1)}
 $$
 in $V_0$. Replacing $c_i$ with $-\frac{1}{e_{i+1}} a_{\sigma_1(1),\ldots,\sigma_{i-1}(1)}$ in (\ref{eq8}), we have that $c_i\in A$.
 
 Suppose $n_i=1$, so that $e_{i+1}=e_i$. Then substituting (\ref{eq8}) and (\ref{eqU2}) into (\ref{eqU1}), we obtain
 $$
 f=\phi_{i+1}^{e_i}+(\sum_{k\ge 2} a_{\sigma_1(k),\ldots,\sigma_{i-1}(k)}\phi_1^{\sigma_1(k)}\cdots\phi_{i-1}^{\sigma_{i-1}(k)})\phi_{i+1}^{e_i-1} +\sum_{j=2}^{e_i-2}f_j'\phi_{i+1}^j
 $$
 where $\deg_zf_j'<\deg_z\phi_{i+1}=\deg_z\phi_i$ for all $j$. Since (\ref{eqU2}) is a finite sum, we can only have $n_i=1$ for finitely many consecutive $i$.
 
 Since $\deg_zf=e_in_1\cdots n_{i-1}$ for all $i$, we must have that the algorithm terminates in a finite number of iterations $k$. We then have that $\phi_k=f$ and $W=V_k$.
 
 The final statement on the structure of ${\rm gr}_{\omega}(A[z]/(f(z)))$ now follows from Lemma \ref{Lemma1}.

\end{proof}

As an immediate consequence of Theorem \ref{Theorem1}, we have the following example, which allows us to easily compute the associated graded rings and valuation semigroups of many examples, including the rational double point singularities in dimension two,
since the semigroups of valuations dominating two dimensional regular local rings are completely known (\cite{Sp}. \cite{CV1}).

\begin{Example}\label{Ex40} Let $\mathbf k$ be an algebraically closed field of characteristic $p\ne 2$, and $A=\mathbf k[[x_1,\ldots,x_n]]$ be a power series ring over $\mathbf k$. Let $f(z)=z^2+az+b$ with $a,b\in m_A$ be irreducible and let $B=A[z]/(f(z))$. Suppose that $\nu$ is a valuation of the quotient field of $A$ which dominates $A$ and such that $R_{\nu}/m_{\nu}=\mathbf k$. 

Suppose that $\nu$  has a unique extension $\omega$  to the quotient field of $B$ which dominates $B$.  Then there exists $g\in m_A$ such that setting $\overline z=z-g$, we have that
\begin{enumerate}
\item[1)] $\omega(\overline z)$ is a generator of $G_{\omega}/G_{\nu}\cong \ZZ/2 \ZZ$ and
\item[2)] ${\rm gr}_{\omega}(B)={\rm gr}_{\nu}(A)[{\rm in} (\overline z)]\cong {\rm gr}_{\nu}(A)[\overline\phi]/(\overline\phi^2-\overline c)$ for some $\overline c\in {\rm gr}_{\nu}(A)$.
\end{enumerate}

In constrast, if $\nu$ does not have a unique extension to the quotient field of $B$ which dominates $B$, then 
it can happen that ${\rm gr}_{\omega}(B)$ is not a finitely generated ${\rm gr}_{\nu}(A)$-module (as will follow from Example \ref{IEx2}).
\end{Example}

The good conclusions of Theorem \ref{Theorem1} may fail if either the extension is not unique or $p$ divides $\deg_zf$.
In \cite[Example 8.1]{T3}, an example of Guillaume Rond is presented which shows that the conclusions of Theorem \ref{Theorem1} may fail if the extension of $V_0$ to a valuation of $K[z]/(f(z))$ is not unique and $p\not|\deg_zf$.

\begin{Example}\label{Ex1}
The conclusions of Theorem \ref{Theorem1} may fail if the characteristic $p$ of the field $\mathbf k$ divides the degree of $f(z)$. In our example, $f(z)$ is separable and $V_0$  has a unique extension to $K[z]/(f(z))$.
\end{Example}


We now give the construction of the example. Let $\mathbf k$ be an algebraically closed field of characteristic 2 and let $A=\mathbf k[x_1,x_2]_{(x_1,x_2)}$ be a localization of a two dimensional  polynomial ring over $\mathbf k$.
Let $K$ be the quotient field of $A$.   Let $V_0$ be the rank 1  valuation on $K$ defined by $V_0(x_1)=1$ and $V_0(x_2)=\sqrt{37}$, so that $G_{V_0}=\ZZ+\sqrt{37}\ZZ$. Let
$$
f(z)=z^4+x_1^{317}z+x_1^4x_2^2+x_2^{31}.
$$
We have that $f(z)$ is an irreducible, separable polynomial in $K[z]$.

Setting $\phi_1=z$, we have that the Newton polygon $N(V_0,\phi_1)$ has only one segment, from $(0,0)$ to $(4,4+2\sqrt{37})$. The slope of this segment is $1+\frac{1}{2}\sqrt{37}$, giving the first step approximant to $f$ over $V_0$, $V_1=[V_0;V_1(\phi_1)=1+\frac{1}{2}\sqrt{37}]$. We have that $G_{V_1}=\ZZ+\frac{\sqrt{37}}{2}\ZZ$.

Now $f\sim (z^2+x_1^2x_2)^2$ in $V_1$ and $V_1(z)\not\in G_{V_0}$ so $\phi_2=z^2+x_1^2x_2$ is a key polynomial over $V_1$. We have that
$$
f=\phi_2^2+x_1^{317}z+x_2^{31}
$$
so the principal part of $N(V_1,\phi_2)$ is equal to $N(V_1,\phi_2)$, which has only one segment, from $(0,0)$ to $(2,31\sqrt{37})$. The slope is $\frac{31}{2}\sqrt{37}$, giving the 2-nd step approximant to $f$ over $V_0$, $V_2=[V_1;V_2(\phi_2)=\frac{31}{2}\sqrt{37}]$, with $G_{V_2}=G_{V_1}$. We have that
$$
f=(\phi_2+zx_1^{-1}x_2^{15})^2+\phi_2x_1^{-2}x_2^{30}+x_1^{317}z
$$
so that $f\sim (\phi_2+zx_1^{-1}x_2^{15})^2$ in $V_2$. Thus
\begin{equation}\label{eqE1}
\phi_3=\phi_2+zx_1^{-1}x_2^{15}
\end{equation}
is a key polynomial for $V_2$. We have that
$$
f=\phi_3^2+x_1^{-2}x_2^{30}\phi_3+x_1^{-3}x_2^{45}z+x_1^{317}z
$$ 
so the principal part of $N(V_2,\phi_3)$ is equal to $N(V_2,\phi_3)$ which has only one segment, from $(0,0)$ to $(2,\frac{91}{2}\sqrt{37}-2)$. The slope is $\frac{91}{4}\sqrt{37}-1$, giving the 3-rd stage approximant to $f$ over $V_0$, $V_3=[V_2;V_3(\phi_3)=\frac{91}{4}\sqrt{37}-1]$, with 
$$
G_{V_3}=G_{V_1}+\left(\frac{91}{4}\sqrt{37}-1\right)\ZZ=\ZZ+\frac{\sqrt{37}}{4}\ZZ.
$$
Now $f\sim \phi_3^2+x_1^{-3}x_2^{45}z$ in $V_3$ and $V_3(\phi_3)\not\in G_{V_1}$, so
$\phi_4=\phi_3^2+x_1^{-3}x_2^{45}z$ is a key polynomial over $V_3$. We have that 
$$
f=\phi_4+x_1^{-2}x_2^{30}\phi_3+x_1^{317}z
$$
so the principal part of $N(V_3,\phi_4)$ is $N(V_3,\phi_4)$, which has only one segment, from $(0,0)$ to $(1,-3+\frac{131}{4}\sqrt{37})$. The slope is $-3+\frac{131}{4}\sqrt{37}$, giving the 4-th stage approximant to $f$ over $V_0$, $V_4=[V_3;V_4(\phi_4)=-3+\frac{131}{4}\sqrt{37}]$. We have that $G_{V_4}=G_{V_3}$. Now
$f\sim \phi_4+x_1^{-2}x_2^{30}\phi_3$ in $V_4$ so
$$
\phi_5=\phi_4+x_1^{-2}x_2^{30}\phi_3
$$
 is a key polynomial over $V_4$. We have that $f=\phi_5+x_1^{317}z$ so the principal part of $N(V_4,\phi_5)$ is $N(V_4,\phi_5)$, which has only one segment, from $(0,0)$ to $(1,318+\frac{1}{2}\sqrt{37})$. The slope is $318+\frac{1}{2}\sqrt{37}$, giving the 5-th stage approximant to $f$ over $V_4$, $V_5=[V_4;V_5(\phi_5)=318+\frac{1}{2}\sqrt{317}]$. We have that $G_{V_5}=G_{V_3}$.

Now $f=\phi_5+x_1^{317}z$ is a key polynomial for $V_5$, so $V_6=[V_5;V_6(f(z))=\infty]$ is a pseudo valuation with $I(V_6)_{\infty}=(f(z))$. 

Let $\omega$ be the induced extension of $V_0$ to $K[z]/(f(z))$. We have that $G_{\omega}=G_{V_3}$ and thus
$$
[G_{\omega}:G_{V_0}]=4=\deg_zf=[L:K]
$$
showing that $\omega$ is the unique extension of $V_0$ to a valuation of $L$, and that $\delta(\omega/V_0)=1$, so that the extension is defectless (Section \ref{Sec6}).
Observe that we cannot avoid substitutions like (\ref{eqE1}), leaving the ring $A$ in any realization of the algorithm. Notice that the conclusions of Theorem \ref{Theorem1} are verified, if we take $A_1$ to be a birational extension of $A$ containing $x_1^{-1}x_2^{15}$.

\begin{Remark} In the example, the valuation $V_0$ is an Abhyankar valuation, which means that there is equality in the fundamental inequality of Abhyankar (\cite[Theorem 1]{Ab1}), 
$$
\dim_{\QQ}G_{V_0}\otimes_{\ZZ}\QQ+{\rm trdeg}_{A/m_A}R_{V_0}/m_{V_0}=\dim A.
$$
It is known  (\cite[Theorem 1]{K2})    that  Abhyankar valuations have ``no defect'', a fact which plays a role in this example. We will come back to the study of the effect of defect  in Sections \ref{Sec5}, \ref{SecV}, \ref{Sec6}  and \ref{Sec8} below.
\end{Remark}

\section{Henselization and completion}\label{Sec5}

A valued field $(K,\nu)$ is Henselian if for all algebraic extensions $L$ of $K$, there exists a unique valuation $\omega$ of $L$ which extends $\nu$. Some references on the theory of Henselian fields are  \cite{K1}, \cite{E}, \cite{R} and \cite{V2}.

An extension $(K^h,\nu^h)$ of a valued field $(K,\nu)$ is called a Henselization of $(K,\nu)$ if $(K^h,\nu^h)$ is Henselian and for all Henselian valued fields $(L,\omega)$ and all embeddings $\lambda:(K,\nu)\rightarrow (L,\omega)$, there exists a unique embedding $\tilde\lambda:(K^h,\nu^h)\rightarrow (L,\omega)$ which extends $\lambda$.

A Henselization $(K^h,\nu^h)$ of $(K,\nu)$ can be constructed by choosing an extension $\nu^s$ of $\nu$ to a separable closure $K^{\rm sep}$ of $K$ and letting $K^h$ be the fixed field of the decomposition group  
$$
\{\sigma\in G(K^{\rm sep}/K)\mid \nu^s\circ\sigma=\nu^s\}
$$
of $\nu^s$, and defining $\nu^h$ to be the restriction of $\nu^s$ to $K^h$ (\cite[Theorem 17.11]{E}).

\begin{Lemma}\label{Lemma2} Suppose that $(K,\nu)$ is a valued field  and 
let $(K^h,\nu^h)$ be a Henselization of $(K,\nu)$. 
Suppose that $f(z)\in K[z]$ is unitary, irreducible and separable. Then $f(z)$ is reduced in $K^h[z]$. Let $f(z)=f_1(z)f_2(z)\cdots f_r(z)$ be the factorization of $f(z)$ into irreducible unitary factors in  $K^h[z]$. If the coefficients of $f(z)$ are in $R_{\nu}$ then the coefficients of the $f_i(z)$ are in $R_{\nu^h}$. 

Let $\nu_i^h$ be the (unique) extension of $\nu^h$ to $K^h[z]/(f_i)$. Then the distinct extensions of $\nu$ to $K[z]/(f(z))$ are the $r$ restrictions $\nu_i$ of $\nu^h_i$ to $K[z]/(f(z))$, under the natural inclusions $K[z]/(f(z))\rightarrow K^h[z]/(f_i(z))$. 
\end{Lemma}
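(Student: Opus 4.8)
The plan is to handle the three assertions of the lemma in turn, each reducing to a standard fact about Henselizations, and then to concentrate the effort on the correspondence between the factors $f_i$ and the extensions of $\nu$. I would first settle reducedness and the integrality of the coefficients. Since $f(z)$ is separable over $K$ there are $a(z),b(z)\in K[z]$ with $af+bf'=1$, and this identity persists in $K^h[z]$; hence $f$ is separable, in particular squarefree, over $K^h$, so the $f_i$ are pairwise distinct, irreducible and separable over $K^h$ and each $L_i:=K^h[z]/(f_i(z))$ is a finite separable field extension of $K^h$. For the coefficients, any root $\theta$ of some $f_i$ in an algebraic closure is a root of the monic polynomial $f\in R_\nu[z]$, hence integral over $R_\nu$, a fortiori over $R_{\nu^h}$; the coefficients of $f_i$ are, up to sign, elementary symmetric functions of such roots, so they are integral over $R_{\nu^h}$ and they lie in $K^h$. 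As a valuation ring, $R_{\nu^h}$ is integrally closed in $K^h$, so these coefficients lie in $R_{\nu^h}$.

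Next comes the correspondence. Since $f_i$ divides $f$ in $K^h[z]$, the rule $z\mapsto z$ gives a $K$-algebra homomorphism $L:=K[z]/(f(z))\to L_i$, injective because $L$ is a field, and $\nu^h$ extends uniquely to the finite extension $L_i$ of the Henselian field $K^h$; writing $\nu_i^h$ for this extension and $\nu_i:=\nu_i^h|_L$, each $\nu_i$ is an extension of $\nu$ to $L$. The point to prove is that $\omega\mapsto$ (the factor it comes from) is a \emph{bijection} from the set of extensions of $\nu$ to $L$ onto $\{f_1,\dots,f_r\}$. I would deduce this from the $K^h$-algebra isomorphism $L\otimes_K K^h\cong K^h[z]/(f(z))\cong\prod_{i=1}^r L_i$ together with the standard structure theorem (see \cite{E}, \cite{R}) that for a finite separable extension $L/K$ one also has $L\otimes_K K^h\cong\prod_{\omega\mid\nu}L_\omega^h$, the product running over the extensions $\omega$ of $\nu$ to $L$, with $L_\omega^h$ the Henselization of $(L,\omega)$. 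Since the decomposition of a finite product of fields into field factors is unique, comparing the two decompositions yields a bijection $f_i\leftrightarrow\omega_i$ under which $L_i\cong L_{\omega_i}^h$ as valued $K^h$-algebras; restricting the valuations to $L$ gives $\omega_i=\nu_i$, and distinct factors give inequivalent valuations, which is exactly the assertion.

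If one prefers not to invoke that structure theorem, the last point can be proved directly with the universal property, and this is where I expect the only real difficulty to lie. Given an extension $\omega$ of $\nu$ to $L$, pass to the Henselization $(L^h,\omega^h)$; the universal property of $K^h$ furnishes a unique $K$-embedding $K^h\hookrightarrow L^h$ (compatible with valuations, since it extends $(K,\nu)\to(L^h,\omega^h)$), so the image $\theta$ of $z$ in $L\subseteq L^h$ generates a subextension $K^h(\theta)\subseteq L^h$ whose minimal polynomial over $K^h$ divides $f$, hence equals some $f_i$; thus $K^h(\theta)\cong L_i$ as valued $K^h$-algebras by uniqueness of the extension of $\nu^h$, and restricting $\omega^h$ to $L$ gives $\omega=\nu_i$. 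Conversely, $L_i$ is itself Henselian, being finite over $K^h$, and since $K^h(\theta)$ sits between $L$ and $L_i$ with $[K^h(\theta):K^h]=\deg f_i=[L_i:K^h]$ one gets $L_i=(L,\nu_i)^h$, so the index $i$ is recovered intrinsically from $(L,\nu_i)$, forcing injectivity. The main obstacle is precisely this bookkeeping: verifying that the embedding $K^h\hookrightarrow L^h$ is compatible with the canonical inclusions $K^h\subseteq L_i$ (which is the uniqueness clause of the universal property) and that $L_i$ realizes the Henselization of $(L,\nu_i)$, since it is these identifications that turn an a priori surjection onto $\{f_1,\dots,f_r\}$ into a bijection and thereby justify the word ``distinct'' in the statement.
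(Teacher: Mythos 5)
Your proof is correct, but the heart of it runs along a genuinely different route from the paper's. For the first two assertions (reducedness of $f$ in $K^h[z]$ and integrality of the coefficients of the $f_i$) your argument is essentially the paper's: the paper deduces reducedness from reducedness in $K^{\rm sep}[z]$ and gets the coefficients in $R_{\nu^h}$ from integrality of the roots over $R_\nu$ plus normality of $R_{\nu^h}$, exactly as you do. For the correspondence between the factors $f_i$ and the extensions of $\nu$, however, the paper does not use the tensor-product decomposition: it cites the classification of extensions via $K$-embeddings $\lambda:L\rightarrow K^{\rm sep}$, namely the surjection $\lambda\mapsto \nu^s\circ\lambda$ from ${\rm Mon}(L,K)$ onto $\mathcal E(L,\nu)$ whose fibres are the $\sim_{K^h}$-conjugacy classes (\cite[Lemma 1.4]{V2}, \cite[Section 17]{E}), and then identifies $\lambda(L)\cdot K^h=K^h[\alpha_\lambda]\cong K^h[z]/(f_i)$, so that conjugacy classes of embeddings match irreducible factors. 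Your first argument instead compares the two decompositions of $L\otimes_K K^h$ (Chinese remainder versus $\prod_{\omega\mid\nu}L^h_\omega$) and uses uniqueness of the splitting of an Artinian algebra into fields; your second argument uses the universal property of the Henselization directly. Both are legitimate, and the external input you invoke (the structure theorem for $L\otimes_K K^h$, or the fact that the compositum $K^h(\theta)=L\cdot K^h$ is a Henselization of $(L,\omega)$) is of the same strength as the fact the paper cites — indeed the paper itself quotes the compositum statement later, from \cite[Lemma 1.3]{V2} and \cite[(17.16)]{E}, when defining the defect. The one point you should tighten is the injectivity step of your direct argument: the degree count $[K^h(\theta):K^h]=\deg f_i=[L_i:K^h]$ only identifies $K^h(\theta)$ with $L_i$; to conclude that $L_i$ \emph{is} a Henselization of $(L,\nu_i)$ (so that $f_i$ is intrinsically recoverable from $(L,\nu_i)$) you must appeal to the compositum theorem just mentioned, not to Henselianity of $L_i$ alone — you flag this yourself, and with that citation the argument closes. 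What your route buys is independence from the explicit bookkeeping with $K^{\rm sep}$ and conjugate embeddings; what the paper's route buys is that the same decomposition-group formalism is reused verbatim in Lemma 5.2 and in the definition of the defect.
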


\begin{proof} The polynomial $f(z)$ is reduced in $K^h[z]$ since the separable polynomial $f(z)$ is reduced in $K^{\rm sep}[z]$ where  $K^{\rm sep}$ is a separable closure  of $K$.

Let $\overline z$ be a root of $f_i(z)$ in $K^{\rm sep}$. Then $f(z)$ is the minimal polynomial of $\overline z$ in $K[z]$, and $K[z]/(f(z))\cong K[\overline z]$.  If $\overline z$ is integral over $R_{\nu}$, then $\overline z$ is integral over $R_{\nu^h}$. Thus the coefficients of $f_i$ are in $R_{\nu^h}$ since $R_{\nu^h}$ is normal (\cite[Theorem 5, page 260]{ZS1}).  

If $L$ is a finite separable extension of $K$, then we have two associated sets,
$$
{\rm Mon}(L,K)=\mbox{$K$-embeddings of $L$ in $K^{\rm sep}$}
$$
and
$$
\mathcal E(L,\nu)=\mbox{Extensions of $\nu$ to a valuation of $L$}.
$$
By \cite[Lemma 1.4 ]{V2} or \cite[Section 17]{E}, the map $\Phi:{\rm Mon}(L,K)\rightarrow \mathcal E(L,\nu)$, defined by $\Phi(\lambda)=\nu^s\circ\lambda$ is surjective, with $\Phi(\lambda)=\Phi(\lambda')$ if and only if $\lambda\sim_{K^h}\lambda'$. The equivalence $\sim_{K^h}$ is defined by 
$\lambda\sim_{K^h}\lambda'$ if and only if there exists a $K^h$-isomorphism $\sigma:K^{\rm sep}\rightarrow K^{\rm sep}$ such that $\lambda'=\sigma\circ\lambda$. 

The valuation $\nu^s\circ\lambda$ is obtained from the embedding
$$
L\cong \lambda(L)\rightarrow \lambda(L)\cdot K^h
$$
into the join of $\lambda(L)$ and $K^h$ in $K^{\rm sep}$, and the restriction of the valuation $\nu^s|\lambda(L)\cdot K^h$ to $L$.

Let $L=K[z]/(f(z))$. The elements $\lambda\in {\rm Mon}(L,K)$ are in one to one correspondence with the distinct roots $\alpha_{\lambda}$ of $f(z)$ in $K^{\rm sep}$. We have $\lambda(L)\cdot K^h=K^h[\alpha_{\lambda}]$. Thus $\lambda(L)\cdot K^h\cong K^h[z]/(f_i)$ for some $i$. Further, $\lambda\sim_{K^h}\lambda'$ if and only if $\alpha_{\lambda}$ and $\alpha_{\lambda'}$ have the same minimal polynomial $f_i$ in $K^h[z]$.

Since $K^h$ is Henselian, for each $i$ there is a unique extension of $\nu^h$ to $K^h[z]/(f_i)$, and so the last assertion of the lemma follows.
\end{proof}

Suppose that $A$ is a local ring and $g(z)\in A[z]$ is a polynomial. Let $\overline g(z)\in A/m_A[z]$ be the polynomial obtained by reducing the coefficients of $g(z)$ mod $m_A$. 

A local ring $A$ is a Henselian local ring if it has the following property: Let $f(z)\in A[z]$ be a unitary polynomial of degree $n$. If $\alpha(z)$ and $\alpha'(z)$ are relatively prime unitary polynomials in $A/m_A[z]$ of degrees $r$ and $n-r$ respectively such that $\overline f(z)=\alpha(z)\alpha'(z)$, then there exist unitary polynomials $g(z)$ and $g'(z)$ in $A[z]$ of degrees $r$ and $n-r$ respectively such that $\overline g(z)=\alpha(z)$, $\overline g'(z)=\alpha'(z)$ and $f(z)=g(z)g'(z)$.

If $A$ is a local ring, a local ring $A^h$ which dominates $A$ is called a Henselization of $A$ if any local homomorphism from $A$ to a Henselian local ring can be uniquely extended to $A^h$. A Henselization always exists (\cite[Theorem 43.5]{N}). The construction is particularly nice when $A$ is a normal local ring. Let $K$ be the quotient field of $A$ and Let $K^{\rm sep}$ be a separable closure of $A$. Let $\overline A$ be the integral closure of $A$ in $K^{\rm sep}$ and let $\overline m$ be a maximal ideal of $\overline A$.

Let $H$ be the decomposition group
$$
H=G^s(\overline A_{\overline m}/A)=\{\sigma\in G(K^{\rm sep}/K)\mid \sigma(\overline A_{\overline m})=\overline A_{\overline m}\}.
$$
Then $A^h=(\overline A_{\overline m})^H$ is the fixed ring of the action of $H$ on $\overline A_{\overline m}$.
We have
$$
A^h=(\overline A\cap K^H)_{\overline m\cap(\overline A\cap K^H)}=\overline A_{\overline m}\cap K^H=(\tilde A)_{\overline m\cap\tilde A}
$$
where $\tilde A$ is the integral closure of $A$ in $K^H$.

Nagata rings are defined and their basic properties are developed in \cite[Chapter 12]{Mat}. Nagata rings are called Universally Japanese in \cite{EGA}. Their basic properties are established in \cite[IV.7.2.2]{EGA}.

We remark that if $A$ is a Nagata local domain with quotient field $K$ and $\nu$ is a valuation of $K$ which dominates $A$, then there exists a directed system of normal birational extensions $A_i$ of $A$ such that $\bigcup_i A_i=R_{\nu}$.

\begin{Lemma}\label{Lemma3} Continuing the assumptions of Lemma \ref{Lemma2},
suppose that $A$ is a Nagata local domain with quotient field $K$ such that $\nu$ dominates $A$, and that $A_i$ is a directed system of birational extensions of $A$ such that the $A_i$ are normal local domains which are dominated by $\nu$ and $\bigcup_i A_i=R_{\nu}$. Then there are natural equalities
$$
R_{\nu^h}=(R_{\nu})^h=\bigcup_i A_i^h.
$$
\end{Lemma}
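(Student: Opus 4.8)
The plan is to establish the two equalities $R_{\nu^h}=(R_\nu)^h$ and $(R_\nu)^h=\bigcup_i A_i^h$ separately, using the explicit construction of Henselizations via decomposition groups recalled just before the statement. Fix a separable closure $K^{\mathrm{sep}}$ of $K$, an extension $\nu^s$ of $\nu$ to $K^{\mathrm{sep}}$, and let $H=\{\sigma\in G(K^{\mathrm{sep}}/K)\mid \nu^s\circ\sigma=\nu^s\}$ be the decomposition group of $\nu^s$, so that $K^h=(K^{\mathrm{sep}})^H$ and $\nu^h=\nu^s|_{K^h}$. By definition $R_{\nu^h}$ is the valuation ring of $\nu^h$ in $K^h$. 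On the other hand, since $R_\nu$ is a normal local domain (being a valuation ring), its Henselization can be computed as $(R_\nu)^h=\overline{R_\nu}_{\,\overline m}\cap K^H$, where $\overline{R_\nu}$ is the integral closure of $R_\nu$ in $K^{\mathrm{sep}}$ and $\overline m$ is the maximal ideal corresponding to the chosen extension $\nu^s$. But the integral closure of a valuation ring $R_\nu$ in $K^{\mathrm{sep}}$ localized at the maximal ideal cut out by $\nu^s$ is precisely the valuation ring $R_{\nu^s}$ of $\nu^s$ in $K^{\mathrm{sep}}$ (a valuation ring is integrally closed, and $R_{\nu^s}$ dominates $R_\nu$ and is the unique local ring in $K^{\mathrm{sep}}$ with this property attached to $\nu^s$). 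Hence $(R_\nu)^h=R_{\nu^s}\cap K^H=R_{\nu^s}\cap K^h$, which is exactly the valuation ring of $\nu^h=\nu^s|_{K^h}$, i.e.\ $R_{\nu^h}$. This gives the first equality.

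For the second equality, I would use the compatibility of Henselization with direct limits of local homomorphisms, together with the hypothesis $\bigcup_i A_i=R_\nu$. Each $A_i$ is a normal local domain dominated by $\nu$, with quotient field $K$, so its Henselization is $A_i^h=\overline{A_i}_{\,\overline m_i}\cap K^{H_i}$, where $H_i=G^s(\overline{A_i}_{\,\overline m_i}/A_i)$ is the decomposition group of the maximal ideal $\overline m_i=\overline m\cap \overline{A_i}$ of the integral closure $\overline{A_i}$ of $A_i$ in $K^{\mathrm{sep}}$. Since each $A_i\subset R_\nu$ and all these rings sit inside the \emph{same} $K^{\mathrm{sep}}$ with the \emph{same} chosen extension $\nu^s$ of $\nu$, the decomposition groups $H_i$ are nested and, because $\bigcup_i A_i=R_\nu$, we have $\bigcap_i H_i=H$: an automorphism $\sigma$ fixing $\nu^s$ is exactly one that preserves $\overline m\cap\overline{A_i}$ for every $i$ (as the $A_i$ exhaust $R_\nu$). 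Therefore $\bigcup_i K^{H_i}=K^{\bigcap_i H_i}=K^H=K^h$. Intersecting with $R_{\nu^s}$ and using that localization and union commute, $\bigcup_i A_i^h=\bigcup_i\bigl(R_{\nu^s}\cap K^{H_i}\bigr)=R_{\nu^s}\cap K^h=R_{\nu^h}$, which together with the first equality is the assertion. (More concretely: $A_i^h=(\tilde A_i)_{\overline m\cap \tilde A_i}$ where $\tilde A_i$ is the integral closure of $A_i$ in $K^{H_i}$; taking the directed union over $i$ exhausts the integral closure of $R_\nu$ in $K^h$, localized at the relevant maximal ideal, which is $R_{\nu^h}$.)

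The main obstacle I anticipate is the identification $\bigcap_i H_i = H$, i.e.\ controlling the decomposition groups in the limit. One must check that the nested family of decomposition groups $H_i$ of the maximal ideals $\overline m\cap \overline{A_i}$ really does intersect down to the decomposition group of $\nu^s$ itself, and not to something strictly larger; this is where the hypothesis that $A$ is Nagata is used, since it guarantees (via the remark preceding the lemma) the existence of the directed system $\{A_i\}$ of \emph{normal} birational extensions with $\bigcup_i A_i=R_\nu$, so that the construction of the Henselization as a fixed ring of a decomposition group is available at each finite stage \emph{and} passes to the limit. The verification that "the union of the fixed fields is the fixed field of the intersection of the groups" is elementary Galois theory, but one has to be careful that the groups act on the fixed common object $K^{\mathrm{sep}}$ and that $\nu^s$ is fixed throughout; once that bookkeeping is set up, the rest is a routine manipulation of valuation rings and integral closures, using that a valuation ring is integrally closed and that the normalization of $R_\nu$ in $K^{\mathrm{sep}}$ localized at $\nu^s$ is $R_{\nu^s}$.
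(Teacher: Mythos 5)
Your proof of the first equality is essentially the paper's: identify $\overline{R_\nu}_{\,\overline m}$ with $R_{\nu^s}$ and intersect with $K^h$. (You silently identify the decomposition group of $\overline m$ with the decomposition group $H$ of $\nu^s$; this is the point the paper attributes to \cite{ZS2}, and it is easy, since $\sigma(R_{\nu^s})=R_{\nu^s}$ exactly when $\nu^s\circ\sigma$ and $\nu^s$ have the same valuation ring.) For the second equality your route genuinely differs from the paper's. The paper invokes \cite[Lemmas 3.3 and 3.4]{C1} to get $H\subseteq H_i$ for all $i$ and the stabilization $H=H_i$ for $i\gg 0$, and then identifies $A_i^h$, for large $i$, with the localization of the integral closure of $A_i$ in $K^h$, whose directed union is $R_{\nu^h}$. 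You instead prove only the weaker statement $\bigcap_i H_i=H$ (which does follow from $\bigcup_i A_i=R_\nu$, since then $\bigcup_i\overline{A_i}=\overline V$ and $\bigcup_i\overline m_i=m_{\nu^s}\cap\overline V$, so an automorphism preserving every $\overline m_i$ preserves $R_{\nu^s}=\overline V_{m}$), and deduce $\bigcup_i K^{H_i}=K^h$ by infinite Galois theory, using that the $H_i$ are closed. That is a legitimate and in fact more self-contained alternative: it avoids the stabilization result of \cite{C1}, at the cost of the (routine) verification that decomposition groups are closed and that the fixed field of a directed intersection of closed subgroups is the union of the fixed fields. Also note that the Nagata hypothesis is not actually used in the proof: the directed system with $\bigcup_i A_i=R_\nu$ is part of the hypotheses of the lemma; Nagata only guarantees that such systems exist.

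There is, however, a false step in your concluding chain: $A_i^h$ is \emph{not} equal to $R_{\nu^s}\cap K^{H_i}$. The right-hand side is the valuation ring of $\nu^s|_{K^{H_i}}$, whereas $A_i^h=\overline{A_i}_{\,\overline m_i}\cap K^{H_i}$ is a Noetherian local ring of the same dimension as $A_i$; already when $\dim A_i=2$ (the typical case here) the inclusion $A_i^h\subset R_{\nu^s}\cap K^{H_i}$ is strict, since a Noetherian valuation ring has dimension at most one. So the termwise identity, and with it the displayed computation $\bigcup_i A_i^h=\bigcup_i(R_{\nu^s}\cap K^{H_i})$, is not a proof, even though the two unions do coincide. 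Your final parenthetical contains the correct argument and should replace the chain: with $\tilde A_i$ the integral closure of $A_i$ in $K^{H_i}$ one has $A_i^h=(\tilde A_i)_{m_{\nu^s}\cap\tilde A_i}\subseteq R_{\nu^h}$, and conversely any $x\in R_{\nu^h}$ can be written $x=g/h$ with $g,h$ integral over $R_\nu$, $h\notin m_{\nu^s}$; by your Galois step $g,h\in K^{H_i}$ for some $i$, and $g,h$ are integral over some $A_{j}$ with $A_i,A_{j}\subseteq A_l$, so $x\in(\tilde A_l)_{m_{\nu^s}\cap\tilde A_l}=A_l^h$. With that repair your argument is complete; as written, the key step rests on an identity that is false.
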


\begin{proof} Let $\nu^s$ be an extension of $\nu$ to $K^{\rm sep}$ and 
$$
H=\{\sigma\in {\rm Gal}(K^{\rm sep}/K)\mid \nu^s\circ\sigma=\nu^s\},
$$
so that $K^h=(K^{\rm sep})^H$. Let $\overline V$ be the integral closure of $R_{\nu}$ in $K^{\rm sep}$, and let $m=\overline V\cap m_{\nu^s}$, a maximal ideal in $\overline V$. Since $K^{\rm sep}$  is algebraic over $K$, we have that $R_{\nu^s}=\overline V_m$ by \cite[Theorem 12, page 27]{ZS2}. Now, as is shown on the bottom of page 68 of \cite{ZS2}, $H$ is the decomposition group
$$
H=G^s(R_{\nu^s}/R_{\nu})=\{\sigma\in G(K^{\rm sep}/K)\mid \sigma(R_{\nu^s})=R_{\nu^s}\},
$$
so that 
$$
(R_{\nu})^h=\overline V_{m}\cap K^h=R_{\nu^s}\cap K^h=R_{\nu^h},
$$
establishing the first assertion of the lemma. 

Suppose that $A$ is a normal local ring with quotient field  $K$. Let $\tilde A$ be the integral closure of $A$ in $K^h$. if $A$ is dominated by $V=R_{\nu}$, then $\tilde A_{m_{\nu^s}\cap \tilde A}$ is dominated by $\tilde V_{m_{\nu^s}\cap \tilde V}$ (where $\tilde V$ is the integral closure of $V$ in $K^h$). Suppose $g,h\in \tilde V$ with $h\not\in m_{\nu^s}\cap \tilde V$. Since $\tilde A_i$ is a directed system, there exists $i$ such that $g,h\in \tilde A_i$, so $h\not\in m_{\nu^s}\cap \tilde A_i$ and $\frac{g}{h}\in (\tilde A_i)_{m_{\nu^s}\cap \tilde A_i}$. Thus
$$
\bigcup_i(\tilde A_i)_{m_{\nu^s}\cap \tilde A_i}=R_{\nu}^h.
$$

Let $\overline A_i$ be the integral closure of $A_i$ in $K^{\rm sep}$. By \cite[Lemma 3.3]{C1}, we have inclusions of decomposition groups 
$$
G^s(R_{\nu^s}/R_{\nu})\subset G^s((\overline A_i)_{m_{\nu^s}\cap \overline A_i}/A_i)
$$
for all $i$, and by \cite[Lemma 3.4]{C1}, there exists $i_0$ such that 
$$
G^s(R_{\nu^s}/R_{\nu})=G^s((\overline A_i)_{m_{\nu^s}\cap \overline A_i}/A_i)
$$
for $i\ge i_0$. Thus $A_i^h\subset (\tilde A_i)_{m_{\nu^s}\cap \tilde A_i}$ for all $i$ and $A_i^h=(\tilde A_i)_{m_{\nu_s}\cap \tilde A_i}$ for $i\gg 0$. The last assertion of the lemma now follows.
\end{proof}

Let $(K,\nu)$ be a valued field such that $\nu$ has rank 1. The completion $(\hat K,\hat\nu)$ (when $\nu$ has rank 1) is defined in Section 2 of \cite{E}. The completion $\hat K$ is defined to be the ring of $\nu$-Cauchy sequences in $K$ modulo the maximal ideal of $\nu$-null sequences ($\nu$-Cauchy sequences whose limit is $\infty$). The extension $\hat\nu$ of $\nu$ is defined by $\hat\nu(h)=\lim_{i\rightarrow \infty}\nu(h_i)$ if $(h_i)$ is a $\nu$-Cauchy sequence in $K$ which converges to $h$.  We have that $\hat K$ is a Henselian field (\cite[Lemma 16.7]{E}).
The following lemma is proven in \cite[Theorem 2.12 ]{E}.

\begin{Lemma}\label{LemmaB3} Suppose that $(K,\nu)$ is a rank 1 valued field  and 
 $(\hat K,\hat \nu)$ is a completion of $(K,\nu)$. 
Suppose that $f(z)\in K[z]$ is unitary, irreducible and separable, so that $f(z)$ is reduced in $\hat K[z]$. Let $f(z)=f_1(z)f_2(z)\cdots f_r(z)$ be the factorization of $f$ into irreducible unitary factors in $\hat K[z]$. 

Let $\hat\nu_i$ be the (unique) extension of $\hat\nu$ to $\hat K[z]/(f_i)$. Then the distinct extensions of $\nu$ to $K[z]/(f(z))$ are the $r$ restrictions $\nu_i$ of $\hat\nu_i$ to $K[z]/(f(z))$, under the natural inclusions $K[z]/(f(z))\rightarrow \hat K[z]/(f_i)$. 
\end{Lemma}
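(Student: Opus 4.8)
The plan is to follow the template of the proof of Lemma~\ref{Lemma2}, with the completion $\hat K$ playing the role of the Henselization $K^h$ and with the Henselian property of $\hat K$ (\cite[Lemma 16.7]{E}) replacing that of $K^h$. First I would recall why $f$ remains reduced over $\hat K$: since $f$ is separable and irreducible over $K$ we have $\gcd(f,f')=1$ in $K[z]$, and this gcd is computed by the Euclidean algorithm, hence is unchanged under the field extension $K\subseteq\hat K$, so $f$ has no repeated irreducible factor in $\hat K[z]$. Writing $f=f_1\cdots f_r$ with the $f_i$ distinct unitary irreducibles, the Chinese Remainder Theorem gives $\hat K[z]/(f)\cong\prod_i \hat L_i$ with $\hat L_i:=\hat K[z]/(f_i)$ a finite field extension of $\hat K$, and since $\hat K$ is Henselian each $\hat L_i$ carries a unique valuation $\hat\nu_i$ extending $\hat\nu$.

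Next I would check that the $\nu_i$ are well defined, extend $\nu$, and are distinct. The composite $K[z]\to\hat K[z]\to\hat L_i$ kills $f$ (as $f_i\mid f$), hence factors through $L=K[z]/(f(z))$; the induced map $L\to\hat L_i$ is nonzero and $L$ is a field, so it is an embedding, and $\nu_i:=\hat\nu_i|_L$ is a valuation of $L$ lying over $\nu$. The decisive point is that $\hat L_i$ is a completion of $(L,\nu_i)$: it is complete, being a finite extension of the complete rank-one field $\hat K$; and the image of $L$ in $\hat L_i$ --- which coincides with the image of $K[z]$ --- is dense, because $K$ is dense in $\hat K$ and one can approximate the coefficients of a degree $<\deg f_i$ representative of any element of $\hat L_i$. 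Uniqueness of the completion of a rank-one valued field then forces the $\nu_i$ to be pairwise distinct: if $\nu_i=\nu_j$ then $\hat L_i$ and $\hat L_j$ are isomorphic as valued completions of $(L,\nu_i)$, compatibly with the copy of $\hat K$ and with the image of $z$; since that image generates over $\hat K$, its minimal polynomial is simultaneously $f_i$ and $f_j$, so $i=j$.

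Finally I would prove that every extension $\mu$ of $\nu$ to $L$ occurs among the $\nu_i$. Form the completion $\hat L_\mu$; the closure of $K$ in $\hat L_\mu$ is a completion of $(K,\nu)$, giving an embedding $\hat K\hookrightarrow\hat L_\mu$ over which $\hat L_\mu=\hat K[\bar z]$, since $\hat K[\bar z]$ is finite over the complete field $\hat K$, hence closed, and contains the dense subfield $L$. The minimal polynomial of $\bar z$ over $\hat K$ is unitary, irreducible and divides $f$, hence equals some $f_i$; thus $\hat L_\mu\cong\hat L_i$ and, the extension of $\hat\nu$ to $\hat L_i$ being unique, $\hat\mu=\hat\nu_i$ and $\mu=\nu_i$. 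I expect the main obstacle to be the rank-one analysis --- identifying $\hat L_i$ with the completion of $(L,\nu_i)$ and invoking uniqueness of completions of rank-one valued fields, together with the density and closedness arguments identifying $\hat L_\mu$ with $\hat K[\bar z]$; this is precisely where the hypothesis that $\nu$ has rank one is used, and is the content packaged in \cite[Theorem 2.12]{E}. The remaining algebra (reducedness, the CRT splitting, the embeddings) is routine once the Henselian property of $\hat K$ is in hand.
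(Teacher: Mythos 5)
Your proposal is correct, but it is worth knowing that the paper offers no argument of its own for this lemma: it simply cites Endler, Theorem 2.12 of \cite{E}, whereas its neighbouring Lemma \ref{Lemma2} (the Henselization analogue) is the one proved in detail, via the decomposition-group description of $K^h$ and the correspondence between embeddings into $K^{\rm sep}$ and extensions of $\nu$. What you have written is essentially a self-contained proof of the cited theorem by purely ``analytic'' means: reducedness of $f$ over $\hat K$ from $\gcd(f,f')=1$, the CRT splitting $\hat K[z]/(f)\cong\prod_i\hat K[z]/(f_i)$, the Henselian property of the complete rank-one field $\hat K$, and then the identification of each $\hat L_i$ with the completion of $(L,\nu_i)$ together with uniqueness of rank-one completions to get both injectivity and surjectivity of the correspondence. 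This replaces the Galois-theoretic bookkeeping of Lemma \ref{Lemma2} with density, completeness and closedness arguments, which is exactly where the rank-one hypothesis enters (the paper only defines completions in rank one), and it has the merit of making that dependence explicit. Two small points you use implicitly and should state: an arbitrary extension $\mu$ of $\nu$ to the finite extension $L$ again has rank one (since $G_\mu/G_\nu$ is torsion), so its completion is defined; and the valued-field isomorphism between two completions of $(L,\nu_i)$ fixes the copy of $\hat K$ because that copy is the closure of $K$ and two continuous maps agreeing on the dense subfield $K$ agree, which is what licenses the comparison of minimal polynomials of the image of $z$ over $\hat K$. With those sentences added, your argument is a complete substitute for the citation.
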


\begin{Lemma}\label{LemmaB6} Let notation be as in the statement of Lemma \ref{LemmaB3}. We then have a factorization $K\rightarrow K^h\rightarrow \hat K$ of valued fields.  Further, the factorizations of $f(z)$ into products of unitary irreducible polynomials in $K^h[z]$ and $\hat K[z]$ are the same.
\end{Lemma}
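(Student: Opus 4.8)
The statement to prove is Lemma \ref{LemmaB6}: that $K\to K^h\to\hat K$ is a factorization of valued fields, and that the irreducible factorizations of $f(z)$ over $K^h$ and over $\hat K$ coincide. The first assertion is essentially the universal property of the Henselization: since $\hat K$ is a Henselian valued field (by \cite[Lemma 16.7]{E}) and the inclusion $(K,\nu)\hookrightarrow(\hat K,\hat\nu)$ is an embedding of valued fields, the defining universal property of $(K^h,\nu^h)$ produces a unique valued-field embedding $K^h\hookrightarrow\hat K$ compatible with the inclusion of $K$. I would state this in one sentence, being slightly careful that the embedding is injective (clear, since it restricts to the inclusion on $K$ and $K^h$ is a field) and that it respects the valuations (built into the universal property).

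For the equality of factorizations, I would argue through the extensions of $\nu$ on $L=K[z]/(f(z))$. Both Lemma \ref{Lemma2} and Lemma \ref{LemmaB3} describe the \emph{same} set of extensions $\{\nu_1,\dots\}$ of $\nu$ to $L$: Lemma \ref{Lemma2} puts them in bijection with the irreducible factors $f_i$ of $f$ over $K^h$ (with $\nu_i$ the restriction of the unique extension on $K^h[z]/(f_i)$), and Lemma \ref{LemmaB3} puts them in bijection with the irreducible factors of $f$ over $\hat K$ in exactly the same way. So both $r$ (the number of factors) and the correspondence agree; what remains is to see that the factor of $f$ over $K^h$ indexing a given extension $\nu_j$ and the factor over $\hat K$ indexing the same $\nu_j$ are literally the same polynomial. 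For this I would use the factorization $K^h\to\hat K$ just established: an irreducible factor $f_i\in K^h[z]$ of $f$ further factors over $\hat K$, and each irreducible factor of $f_i$ over $\hat K$ corresponds (via Lemma \ref{LemmaB3} applied, so to speak, at the level of $K^h$ — $K^h$ is already Henselian, and $\hat K$ is its completion) to an extension of $\nu^h$ on $K^h[z]/(f_i)$. But there is a \emph{unique} such extension since $K^h$ is Henselian, hence $f_i$ must remain irreducible over $\hat K$. Therefore the factorization of $f$ over $\hat K$ is obtained from that over $K^h$ by doing nothing, i.e. the two factorizations coincide.

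Concretely the key steps, in order, are: (1) invoke the universal property of $(K^h,\nu^h)$ against the Henselian valued field $(\hat K,\hat\nu)$ to get the compatible embedding $K^h\hookrightarrow\hat K$; (2) recall that $K^h$ is itself a Henselian valued field and that $\hat K$ is its completion (the completion of $K$ and of $K^h$ agree, since $K$ is dense in $K^h$ with respect to $\nu$ — this uses rank one, where completion makes sense, and is the one technical point worth a sentence); (3) apply Lemma \ref{LemmaB3} with $K$ replaced by $K^h$: the irreducible factors of $f_i$ over $\hat K$ are in bijection with the extensions of $\nu^h$ to $K^h[z]/(f_i)$; (4) since $K^h$ is Henselian there is exactly one such extension, so $f_i$ is irreducible over $\hat K$; (5) conclude that $f=\prod f_i$ is already the irreducible factorization over $\hat K$.

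**Expected main obstacle.** The genuinely delicate point is step (2): knowing that the completion of $K^h$ (with respect to $\nu^h$) equals the completion $\hat K$ of $K$, so that Lemma \ref{LemmaB3} can be reapplied verbatim over the base field $K^h$. This rests on the density of $K$ in $K^h$ for the rank-one valuation topology — the Henselization sits inside the completion and is dense in it — which is a standard fact (e.g. from \cite{E} or the Cauchy-sequence construction of $\hat K$) but deserves an explicit citation rather than being glossed over. Once that density/identification of completions is in hand, everything else is a clean bookkeeping argument with the uniqueness of extensions over a Henselian field doing all the work, so I do not anticipate other difficulties.
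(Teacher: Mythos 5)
Your proposal is correct, and its first half (getting the embedding $K^h\rightarrow \hat K$ from the universal property of the Henselization against the Henselian field $(\hat K,\hat\nu)$) is exactly what the paper does. For the equality of the factorizations, however, you take a genuinely different route. The paper never leaves the base field $K$: it notes that the irreducible factors of $f(z)$ in $K^h[z]$ are in bijection with the extensions of $\nu$ to $L=K[z]/(f(z))$ (Lemma \ref{Lemma2}) and that the irreducible factors in $\hat K[z]$ are in bijection with the same set of extensions (Lemma \ref{LemmaB3}); since the $\hat K$-factorization refines the $K^h$-factorization and both have the same number of irreducible factors, each $K^h$-factor must remain irreducible in $\hat K[z]$, and unique factorization finishes the argument. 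You instead apply Lemma \ref{LemmaB3} with $K^h$ as the base field: each irreducible factor $f_i\in K^h[z]$ (still unitary and separable, being a divisor of $f$) has its $\hat K$-irreducible factors in bijection with the extensions of $\nu^h$ to $K^h[z]/(f_i)$, of which there is exactly one because $K^h$ is Henselian, so $f_i$ stays irreducible. Your version buys a slightly more structural explanation (Henselian uniqueness is doing the work, with no counting), but it costs you the extra hypothesis you correctly flag: that $(\hat K,\hat\nu)$ is a completion of $(K^h,\nu^h)$. Note that this is cheaper than you make it sound: the fact you need is density of $K^h$ in $\hat K$, not of $K$ in $K^h$, and once the embedding $K\subset K^h\subset \hat K$ of valued fields is in place it follows immediately from the density of $K$ in its completion, together with $G_{\nu^h}=G_\nu$ so that $\nu^h$ still has rank one. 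The paper's counting argument sidesteps this point entirely, which is why its proof is two sentences; your argument is equally valid once that identification is recorded.
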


\begin{proof} We have a natural inclusion of $K^h$ into $\hat K$ since $\hat K$ is a Henselian field. The irreducible factors of $f(z)$ in $K^h(z)$ remain irreducible in $\hat K[z]$ since there is  a 1-1 correspondence of the irreducible factors of $f(z)$ in $\hat K[z]$ with the distinct extensions of $\nu$ to $L=K[z]/(f(z))$ by Lemma \ref{LemmaB3} and there is a 1-1 correspondence of the irreducible factors of $f(z)$ in $K^h[z]$ with the distinct extensions of $\nu$ to $L$ by Lemma \ref{Lemma2}.
\end{proof}

Some references on the defect of a finite field extension  are  \cite{K1}, \cite{E}, \cite{R} and \cite{V2}.

Suppose $(K,\nu)\rightarrow (L,\omega)$ is a finite separable extension of valued fields. Let $K^{\rm sep}$ be a separable closure of $K$ with an embedding of $L$ in $K^{\rm sep}$. Let $\nu^s$ be an extension of $\omega$ to a valuation of $K^{\rm sep}$.  As discussed above, we can use $\nu^s$ to define the Henselization $K^h$ of $(K,\nu)$, with valuation $\nu^h=\nu^s|K^h$, and then $L^h=L\cdot K^h$, the join of $L$ and $K^h$ in $K^{\rm sep}$,  is a Henselization of $(L,\omega)$ with valuation $\omega^h=\nu^s|L^h$ (\cite[Lemma 1.3]{V2}, \cite{K1}, \cite[(17.16)]{E}). The defect of $\omega$ over $\nu$ is defined as
\begin{equation}\label{eqefd}
\delta(\omega/\nu)=[L^h:K^h]/e(\omega^h/\nu^h)f(\omega^h/\nu^h)=[L^h:K^h]/e(\omega/\nu)f(\omega/\nu).
\end{equation}
The defect is a power of the residue characteristic $p$ of the valuation ring of $\nu$ by Ostrowski's lemma (\cite[Theorem 8.2]{K1}).

\section{Vaqui\'e's Algorithm}\label{SecV}

Suppose that $K$ is a field, $f(z)\in K[z]$ is unitary and irreducible,  $\nu$ is a valuation of $K$ and $\mu$ is a pseudo valuation of $K[z]$ which extends $\nu$  such that $I(\mu)_{\infty}=(f(z))$. Vaqui\'e shows in \cite[Theorem 2.5]{V1} that there exists a ``finite admissible family of valuations'' $\mathcal S$ which determines $\mu$. We will take the last element of $\mathcal S$ to be  the pseudo valuation $\mu$.  This result follows from \cite[Proposition 2.3]{V1}, which gives an algorithm for constructing such a  family.

We summarize the definition of an ``admissible family of valuations'' approximating $\mu$ (from \cite[Section 2.1]{V1}), which takes the following form since $I(\mu)_{\infty}=(f(z))\ne 0$.
A family $\mathcal S$ of iterated augmented valuations is called a ``simple admissible family'' if it is of the form $\mathcal S=(\mu_i)_{i\in I}$ where the set of indices $I$ is the disjoint union $I=B\coprod A$ with $B$ a finite set and $A$  a totally ordered set, where all elements of $A$ are larger than all elements of $B$ and $A$ does not have a largest element.

A family of valuations $\mathcal A=(\mu_i)_{i\in I}$ is called an ``admissible family'' for $\mu$ (defined on page 3473 of \cite{V1}) if it is a finite or countable union of simple admissible families $\mathcal S^{(t)}=(\mu_i^{(t)})_{i\in I^{(t)}}$. The first valuation of $\mathcal S^{(1)}$ is an inductive valuation of the form $\mu_1^{(1)}=[\mu_0;\mu_1^{(1)}(\phi_1^{(1)})=\gamma_1]$ where $\mu_0=\nu$ is the given valuation of $K$ and $\phi_1^{(1)}$ is a polynomial of degree 1. For $t\ge 2$, the first valuation $\mu_1^{(t)}$ of $\mathcal S^{(t)}$ is a ``limit augmented valuation'' for the family $(\mu_{\alpha^{(t-1)}})_{\alpha^{(t-1)}\in A^{(t-1)}}$. The construction of limit augmented valuations will be explained below.

Write $I^{(t)}=B^{(t)}\coprod A^{(t)}$ as above and write $B^{(t)}=\{1,\ldots,n^{(t)}\}$. Then for $i\ge 2$ in $B^{(t)}$, $\mu_i^{(t)}=[\mu_{i-1}^{(t)};\mu_i^{(t)}(\phi_i^{(t)})=\gamma_i^{(t)}]$ is an inductive valuation (Section \ref{Sec2}). For $\alpha\in A^{(t)}$, we have that $\mu_{\alpha}^{(t)}=[\mu_{n^{(t)}};\mu_{\alpha}^{(t)}(\phi_{\alpha}^{(t)}=\gamma_{\alpha}^{(t)}]$ is an inductive valuation, where $\deg_z\phi_{\alpha}^{(t)}=\deg_z\phi_{n^{(t)}}^{(t)}$.

Vaqui\'e requires that $\deg_z\phi_{i-1}^{(t)}<\deg_z\phi_i^{(t)}$ for $i\ge 2$ in $B^{(t)}$ but we do not assume this. By the definition of an inductive value, we do have that $\deg_z\phi_{i-1}^{(t)}\le \deg_z\phi_i^{(t)}$ for $i\ge 2$ in $B^{(t)}$. By the  construction of limit key polynomials, we have that $\deg_z\phi_{n^{(t)}}^{(t)}<\deg_z\phi_1^{(t+1)}$ for all $t$.

We require that for $g\in K[z]$ and $i<j\in I$,  
\begin{equation}\label{eqV2}
\mu_i(g)\le \mu_j(g)\le \mu(g).
\end{equation}
 Further, $\mu_i(\phi_i)=\mu(\phi_i)$ for all $i$. 
 
 We now discuss the construction of limit augmented valuations.

Suppose that $\mathcal A=(\mu_{\alpha})_{\alpha\in A}$ is an admissible family of valuations for $\mu$. Define 
(\cite[page 3473]{V1})
$$
\tilde\Sigma(\mathcal A)=\{g\in K[z]\mid \mu_{\alpha}(g)<\mu(g)\mbox{ for all }\mu_{i}\in\mathcal A\}.
$$
Define $d(\mathcal A)=\infty$ if $\tilde\Sigma(\mathcal A)=\emptyset$ and 
$$
d(\mathcal A)=\inf\{\deg_z\phi\mid \phi\in \tilde \Sigma(\mathcal A)\}
$$
if $\tilde\Sigma(\mathcal A)\ne\emptyset$.
Now define
$$
\Sigma(\mathcal A)=\{\phi\in \tilde\Sigma(\mathcal A)\mbox{ such that $\phi$ is unitary and $\deg_z\phi=d(\mathcal A)$}\}
$$
and
\begin{equation}\label{eqV1}
\Lambda(\mathcal A)=\{\mu(\phi)\mid \phi\in \Sigma(\mathcal A)\}.
\end{equation}
Suppose that   $\Lambda(\mathcal A)$ does not have a largest element.  We then define a totally ordered index set $C$, which does not have a largest element, so that
$$
\Lambda(\mathcal A)=\{\gamma_{\alpha}\mid \alpha\in C\},
$$
where $\alpha<\beta$ if and only if $\gamma_{\alpha}<\gamma_{\beta}$.

A ``limit key polynomial'' $\phi$ for $\mathcal A$ is defined on page 3465 of \cite{V2}.  It satisfies the three properties that $\phi$ is $\mathcal A$-minimal, $\phi$ is $\mathcal A$-irreducible and $\phi$ is unitary. The elements of $\Sigma(\mathcal A)$ are limit key polynomials for $\mathcal A$ by \cite[Proposition 1.21]{V1}. 
Choose $\phi_{\alpha}\in \Sigma(\mathcal A)$ for each $\alpha\in C$ so that $\mu(\phi_{\alpha})=\gamma_{\alpha}$. We then have a limit augmented valuation $\mu_{\alpha}=[\mathcal A;\mu_{\alpha}(\phi_{\alpha})=\gamma_{\alpha}]$ (\cite[Proposition 1.22]{V1}),  which is defined by 
\begin{equation}\label{eqV4}
\mu_{\alpha}(g)=\max_{j\in A}\{\min_i\{\mu_j(g_{i})+i\mu(\phi_{\alpha})\}\}
\end{equation}
for $g\in K[z]$, where 
$$
g=\sum g_{i}\phi_{\alpha}^i
$$
with $\deg_z g_{i}<\deg_z\phi_{\alpha}$.

The ``associated family of iterated augmented valuations''  to $\mathcal A$ is 
\begin{equation}\label{eqV3}
(\mu_{\alpha})_{\alpha\in C}.
\end{equation}

 We will explain here how the algorithm proceeds if we are given a  discrete simple admissible family $\mathcal S=\{\mu_1,\ldots,\mu_n\}$ such that $\Sigma(\mu_n)$ is nonempty.
We will produce an admissible family of valuations $\mathcal B$ such that $d(\mathcal B)>d(\mu_n)$.

All elements of $\Sigma(\mu_n)$ are key polynomials for $\mu_n$ by \cite[Theorem 8.1]{M1} or \cite[Theorem 1.15 page 3453]{V2}. 

First suppose that the set of values $\Lambda(\mu_n)$ has a largest element $\gamma'$ (which could be $\infty$). 
Then we can define
$\mu'=[\mu_n;\nu'(\phi')=\gamma']$ 
where $\phi'\in \Sigma(\mu_n)$ satisfies $\mu(\phi')=\gamma'$. 
We then have two cases, depending on if $\deg_z\phi'>\deg_z\phi_n$ or if $\deg_z\phi'=\deg_z\phi_n$.

Assume that $\deg_z\phi'>\deg_z\phi_n$. Set $\phi_{n+1}=\phi'$, $\gamma_{n+1}=\gamma'$ and 
$$
\mu_{n+1}=
\mu'=[\mu_n;\mu_{n+1}(\phi_{n+1})=\gamma_{n+1}].
$$
 Then define $\mathcal B=\{\mu_1,\ldots,\mu_n,\mu_{n+1}\}$ which is a discrete simple admissible family, with $d(\mathcal B)\ge \deg_z(\phi_{n+1})>\deg_z\phi_n$.

Now assume that $\deg_z\phi'=\deg_z\phi_n$. Then  define 
$\mathcal B=\{\mu_1,\ldots,\mu_{n},\mu'\}$ which is again a discrete simple admissible family with $d(\mathcal B)>d(\mu')$ (by \cite[Lemma 15.1]{M1} or \cite[Corollary, page 3448]{V1}).

The last case is when $\Lambda(\mu_n)$ does not have a largest element. Define the associated family of iterated augmented valuations $(\mu_{\alpha})_{\alpha\in C}$ of (\ref{eqV3}) for $\mu_n$. For all $\gamma_{\alpha}\in \Lambda(\mu_n)$, define  $\mu_{\alpha}=[\mu_n; \mu_{\alpha}(\phi_{\alpha})=\gamma_{\alpha}]$. Define $\mathcal S^{(1)}$ by adding to $\mathcal S$ the family
$\mathcal C=(\mu_{\alpha})_{\alpha\in C}$, so $\mathcal S^{(1)}$ is indexed by $I'=\{1,\ldots,n\}\coprod C$ (which does not have a largest element). We have that $\mathcal S^{(1)}$ is a simple admissible family. The family $\mathcal C$ is an ``exhaustive, continuous family of iterated augmented  valuations'' with the property that $\deg_z\phi_{\alpha}=d(\mu_n)$ for all $\alpha\in C$. We have that $f\not\in\Sigma(\mu_n)$ since $C$ does not have a largest element. Thus $\tilde\Sigma(\mathcal C)\ne\emptyset$. By \cite[Proposition 1.21]{V1}, all polynomials of $\Sigma(\mathcal C)$ are limit key polynomials for the family $\mathcal C$. We now choose a polynomial $\phi_1^{(2)}\in \Sigma(\mathcal C)$, 
and define the ``limit augmented valuation'' $\mu_1^{(2)}=[(\mu_{\alpha})_{\alpha\in C}; \nu_1^{(2)}(\phi_1^{(2)})=\mu(\phi_1^{(2)})]$ (by the definition on page 2465 of \cite{V1} and \cite[Proposition 1.22]{V1} and as explained in (\ref{eqV4})) and the discrete, simple admissible family $\mathcal S^{(2)}=\{\mu_1^{(2)}\}$. By \cite[Proposition 1.27]{V1}, $\deg_z\phi_1^{(2)}$ is greater than the degree of the polynomials in $\Sigma(\mu_n)$. Define the admissible family
$\mathcal B=\mathcal S^{(1)}\cup \mathcal S^{(2)}$, which is indexed by $I''=I'\coprod \{1^{(2)}\}$ (where $1^{(2)}$ is larger than every element of  $I'$).

\subsection{Comparison of the algorithms of Section \ref{Sec3} and Vaqui\'e}\label{SubSecMV}

Suppose that $W$ is a pseudo valuation of $K[z]$ which extends a valuation $V_0$ of $K$, such that $I(W)_{\infty}=(f(z))$ where $f$ is unitary and $f(z)\in R_{V_0}[z]$.
Let 
\begin{equation}\label{ch2}
V_1,\ldots,V_k,\ldots
\end{equation}
 be a sequence of approximants to $f$ over $V_0$ constructed by the algorithm of Section \ref{Sec3} which satisfy (\ref{eqV2}) (with  $\mu_j=V_j$ and $\mu=W$). 
 
  

 We then either have that  $\phi_k=f$ or $V_0,V_1,\ldots,V_k,\ldots$ is infinite with $\deg_z\phi_k=\deg_z\phi_{k_0}$ for $k\ge k_0$ 
  In the first case, we have that $\mathcal S=\{V_1,\ldots,V_k\}$ is a discrete simple admissible family of valuations which determines $W$. 
 
 Suppose that $V_1,\ldots,V_k,\ldots$ is infinite. Then $\phi_k\in \Sigma(V_{k_0})$ for $k> k_0$, and so $d(V_{k_0})=\deg_z\phi_{k_0}$.

 If $\Lambda(V_{k_0})$ has a maximal element $\gamma$, $\phi'\in\Sigma(V_{k_0})$ is a key polynomial with $W(\phi')=\gamma$ and corresponding valuation $\mu'=[V_{k_0};\mu'(\phi')=W(\phi')]$, then $\{V_1,\ldots,V_{k_0},\mu'\}$ is the first part of the discrete part of $S^{(1)}$ constructed by Vaqui\'e's algorithm. If $W(\phi')=\infty$, then $\mathcal S=\mathcal S^{(1)}=\{V_1,\ldots,V_{k_0},\mu'=W\}$ is an admissible family of valuations which determines $W$.
 
 Suppose that $\Lambda(V_{k_0})$ does not have a largest element. 
 Let $\mathcal C=(\mu_{\alpha})_{\alpha\in C}$ be the associated family of iterated augmented valuations associated to $V_{k_0}$ of (\ref{eqV3}).  Choose a limit key polynomial $\phi_1^{(2)}$ for $\mathcal C$. The next step in Vaqui\'e's algorithm is to construct $\mathcal S=\mathcal S^{(1)}\cup \mathcal S^{(2)}$ where $\mathcal S^{(1)}=\{V_1,\ldots,V_{k_0}\}\cup\mathcal C$ and $S^{(2)}=\{V_1^{(2)}=[\mathcal C;V_1^{(2)}(\phi_1^{(2)})=W(\phi_1^{(2)})]\}$.
 
 Looking again at the case where $\Lambda(V_{k_0})$ has a maximal element $\gamma$ and $\phi'\in\Sigma(V_{k_0})$ is the corresponding key polynomial, we have an expression $\phi'=\phi_{k_0}+h$ where $h\in K[z]$ has $\deg_zh<\deg_z\phi_{k_0}$. We further have that $h\in R_{V_0}[z]$ by Remark \ref{RemarkM17}. We have an expression (for some $r$)
 $$
 h=\sum_{j=1}^ra_j\phi_1^{\sigma_1(j)}\cdots\phi_{k_0-1}^{\sigma_{k_0-1}(j)}
 $$
 with $a_j\in R_{V_0}$, $0\le \sigma_i(j)<n_i=[G_{V_{i}}:G_{V_{i-1}}]$ for all $i$ and $j$ and
 $$
 W(a_i\phi_1^{\sigma_1(i)}\cdots\phi_{k_0-1}^{\sigma_{k_0-1}(i)})<W(a_j\phi_1^{\sigma_1(j)}\cdots\phi_{k_0-1}^{\sigma_{k_0-1}(j)})
 $$
 if $i<j$. Let
 \begin{equation}\label{eqMV1}
 \psi_i=\phi_{k_0}+a_1\phi_1^{\sigma_1(1)}\cdots \phi_{k_0-1}^{\sigma_{k_0-1}(1)}
 +\cdots+
 a_i\phi_1^{\sigma_1(i)}\cdots\phi_{k_0-1}^{\sigma_{k_0-1}(i)}
 \end{equation}
 for $1\le i\le r$.  We then have (for instance by the criterion of \cite[Proposition 1.9]{V1}) that 
 \begin{equation}\label{eqMV2}
 V_1,\ldots,V_{k_0},V_{k_0+1}',\ldots,V_{k_0+r}'
 \end{equation}
 is a $(k_0+r)$-th stage approximant to $f$ over $V_0$, where 
 $$
 V_{k_0+1}'=[V_{k_0};V_{k_0+1}'(\psi_1)=W(\psi_1)]\mbox{ and }
 V_{k_0+i}'=[V_{k_0+i-1}'; V_{k_0+i}'(\psi_i)=W(\psi_i)]\mbox{ for $2\le i\le r$.}
 $$
 Further, either $W(\phi')<\infty$ and 
 $$
 d(\{V_1,\ldots,V_{k_0},V_{k_0+1}',\ldots,V_{k_0+r}'\})>\deg_z\phi_{k_0} ,
 $$
 or $W(\phi')=\infty$, in which case $f=\phi'$ (since $f$ and $\phi'$ are unitary in $z$ of the same degree) and   $\psi_r=f$.
 
 We may now continue the algorithm of Section \ref{Sec3} to construct higher stage approximants, starting from $V_{k_0+r}'$. After a finite number of iterations of this procedure, we construct a sequence of approximants to $f$,
 \begin{equation}\label{ch1}
 V_1,\ldots,V_{k_1},\ldots
 \end{equation}
 so that $\deg_z\phi_i\le \deg_z\phi_{i+1}$ if $i<k_1$ and $\deg_z\phi_i=\deg_z\phi_i$ for $i\ge k_1$.
  which is either of finite length $k_1$, so that $V_{k_1}=W$,  or there is a jump ($t>1$) in the construction of the admissible family $\mathcal S=\mathcal S^{(1)}\cup\cdots\cup \mathcal S^{(t)}$ determining $W$.

 Suppose that (\ref{ch1}) is infinite and the  equivalent conditions of Lemma \ref{Lemma4} hold for (\ref{ch1}).  Let $\mathcal C=(\mu_{\alpha})_{\alpha\in C}$ be the associated family to $V_{k_1}$ of (\ref{eqV3}). Suppose $g\in K[z]$ and $W(g)<\infty$ and $k$ is so large that $W(\phi_k)>W(g)$.  Write $g=g_m\phi_k^m+\cdots+g_0$ with $\deg_zg_i<\deg_z\phi_k$ for all $i$. We have that 
 $$
 V_k(g)=V_k(g_0)=V_{k_0-1}(g_0)=W(g_0)=W(g).
 $$
 Thus $g\not\in \tilde \Sigma(\mathcal C)$ and so $\deg_zf$ is the smallest degree of an element of $\tilde\Sigma(\mathcal C)$. Thus $\mathcal S=\mathcal S^{(1)}\cup \mathcal S^{(2)}$ where $\mathcal S^{(1)}=\{V_1,\ldots,V_{k_1}\}\cup\mathcal C$ and $S^{(2)}=\{V_1^{(2)}\}$ where $V_1^{(2)}=[\mathcal C;V_1^{(2)}(f(z))=\infty]$.

 The following proposition follows from our analysis.
\begin{Proposition}\label{PropVO} Suppose that $V_0$ has finite rank. Then there exists a realization of the algorithm of Section \ref{Sec3} which produces the first simple admissible family $\mathcal S^{(1)}$ of an admissible family $\mathcal S=\mathcal S^{(1)}\cup\cdots\cup \mathcal S^{(t)}$ determining $W$, where all key polynomials are in $R_{\nu}[z]$.
\end{Proposition}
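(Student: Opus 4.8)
The plan is to organize the case analysis of the preceding subsection into a single argument, keeping track throughout that every key polynomial produced stays in $R_\nu[z]$.

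I would start by applying Theorem \ref{TheoremRMA} to $W$: it yields a sequence of approximants $V_1,V_2,\ldots$ to $f$ over $V_0=\nu$ with $\phi_1=z$ and key polynomials of the normal form (\ref{eq21}), with coefficients $c_{i-1}\in R_{V_0}$; since $\phi_1=z$, each $\phi_i$ is then a polynomial over $R_{V_0}$ in the previous $\phi_j$'s, so all $\phi_i\in R_{V_0}[z]=R_\nu[z]$ (this is also Remark \ref{RemarkM17}). If the sequence is finite, then $\phi_k=f$, $V_k(f(z))=\infty$, and $\mathcal S:=\{V_1,\ldots,V_k\}$ is a discrete simple admissible family determining $W$: here $t=1$, $\mathcal S^{(1)}=\mathcal S$, and we are done.

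If the sequence is infinite, then by (\ref{eqM16}) the $z$-degrees $\deg_z\phi_k$ are non-decreasing and bounded by $\deg_z f$, hence stabilize at $\deg_z\phi_{k_0}$; thus $\phi_k\in\Sigma(V_{k_0})$ for $k>k_0$ and $d(V_{k_0})=\deg_z\phi_{k_0}$. Here I would distinguish two cases. If $\Lambda(V_{k_0})$ of (\ref{eqV1}) has no largest element, then $\{V_1,\ldots,V_{k_0}\}$ together with the associated family $\mathcal C=(\mu_\alpha)_{\alpha\in C}$ of iterated augmented valuations to $V_{k_0}$ of (\ref{eqV3}) is exactly $\mathcal S^{(1)}=\{V_1,\ldots,V_{k_0}\}\cup\mathcal C$, Vaqui\'e's algorithm then continuing with a jump to $\mathcal S^{(2)}$; this is the desired conclusion, all key polynomials of $\mathcal S^{(1)}$ lying in $R_\nu[z]$. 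If $\Lambda(V_{k_0})$ has a largest element $\gamma$, realized by a key polynomial $\phi'=\phi_{k_0}+h$ with $\deg_z h<\deg_z\phi_{k_0}$ and (by Remark \ref{RemarkM17}) $h\in R_{V_0}[z]$, I would expand $h$ over $R_{V_0}$ in $\phi_1,\ldots,\phi_{k_0-1}$ (Remark \ref{RemarkM20}) and insert the auxiliary approximants $\psi_1,\ldots,\psi_r$ of (\ref{eqMV1})--(\ref{eqMV2}); these stay in $R_\nu[z]$ and leave the already fixed values $W(\phi_i)$ ($i\le k_0$) undisturbed. If $\gamma=\infty$ then $\phi'=f$, $\psi_r=f$, the extended realization is finite, and $\mathcal S^{(1)}=\mathcal S$; if $\gamma<\infty$ then $d$ of the new family exceeds $\deg_z\phi_{k_0}$, so continuing the algorithm of Section \ref{Sec3} from $V_{k_0+r}'$ reaches, after finitely many more steps, a key polynomial of strictly larger $z$-degree, and one iterates the whole procedure.

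The last iteration can recur only finitely often, since each round strictly increases the $z$-degree of the key polynomials, and all those degrees divide $\deg_z f$; so after finitely many rounds the realization either terminates with last key polynomial $f$ or reaches a stage $V_{k_1}$ whose stabilized value set has no largest element. In the first case $\mathcal S^{(1)}=\mathcal S$, in the second $\mathcal S^{(1)}=\{V_1,\ldots,V_{k_1}\}\cup\mathcal C$ with $\mathcal C$ the associated family to $V_{k_1}$; in both cases $\mathcal S^{(1)}$ is the first simple admissible family of an admissible family $\mathcal S=\mathcal S^{(1)}\cup\cdots\cup\mathcal S^{(t)}$ determining $W$, all of whose key polynomials lie in $R_\nu[z]$. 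The step I expect to be the main obstacle is the bookkeeping around the auxiliary approximants $\psi_i$: one must verify that their insertion respects the normal form (\ref{eq21}) required by the algorithm of Section \ref{Sec3}, that it does not alter the fixed values $W(\phi_i)$, and that it genuinely raises $d$, so that the counting argument closes; the finite-rank hypothesis is what makes Vaqui\'e's structure theorem \cite[Theorem 2.5]{V1} applicable, ensuring $W$ is determined by a \emph{finite} admissible family and hence that ``the first simple admissible family'' is well defined.
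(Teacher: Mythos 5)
Your proposal is correct and takes essentially the same route as the paper: Proposition \ref{PropVO} is derived there directly from the analysis of Subsection \ref{SubSecMV}, and your argument reproduces that analysis (finite versus infinite realization from Theorem \ref{TheoremRMA}, the case split on whether $\Lambda(V_{k_0})$ has a largest element, insertion of the auxiliary approximants (\ref{eqMV1})--(\ref{eqMV2}) with coefficients in $R_{\nu}$, and the degree-increase bound on the number of iterations). The only slight blemish is your claim that all key-polynomial degrees divide $\deg_z f$, which is neither needed nor justified: the bound $\deg_z\phi_k\le \deg_z f$ together with the divisibility chain (\ref{eqM16}) already forces termination, which is how the paper argues.
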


\subsection{Invariants of ramification and jumps}\label{Subsec7.2}

Suppose that $W$ is an extension of a valuation $V=V_0$ of $K$ to a pseudo valuation of $K[z]$ with $I(W)_{\infty}=(f(z))$ in $K[z]$ with $f$ unitary. Let $\omega$ be  the induced valuation on $L=K[z]/(f(z))$.

The jumps $s^{(j-1)}(\mathcal S)$ in a family $\mathcal S=\mathcal S^{(1)}\cup \cdots\cup \mathcal S^{(t)}$ realizing $W$ are defined by the equations
\begin{equation}\label{eqMV4}
\deg_z\phi_1^{(j)}=s^{(j-1)}(\mathcal S)\deg_z\phi_{\alpha}^{(j-1)}
\end{equation}
where $\phi_{\alpha}^{(j-1)}$ is a key polynomial of a member of  the continuous family $\mathcal C^{(j-1)}$ associated to $\mathcal S^{(j-1)}$.
The total jump of the family $\mathcal S$ is 
$$
s^{\rm tot}(\mathcal S)=\prod_{j=2}^ts^{(j-1)}(\mathcal S).
$$

We have by Lemma 2.11 and \cite[Corollary 2.10]{V2} that
\begin{equation}\label{eqMV3}
\deg_zf=[L:K]= e(\omega/V)f(\omega/V)s^{\rm tot}(\mathcal S).
\end{equation}
We have that $s^{\rm tot}(\mathcal S)=1$  if and only if there are no jumps in the construction of approximants. Here $e(\omega/V)=[G_{\omega}:G_V]$ where $G_{\omega}$ and $G_V$ are the respective value groups of $\omega$ and $V$, and $f(\omega/V)$ is the index of the respective residue fields of the valuation rings of $\omega$ and $V$.

In the case where $\omega$ is the unique extension of $V$ to a valuation of $L$,  we have by Ostrowski's lemma that 
\begin{equation}\label{eqMV5}
[L:K]=e(\omega/V)f(\omega/V)\delta(\omega/V)
\end{equation}
where the defect $\delta(\omega/V)$ is a power of the residue characteristic  $p$ of $V$. Comparing with (\ref{eqMV3}), we have that  $s^{\rm tot}(\mathcal S)=\delta(\omega/V)$ in this case. 
Thus (assuming $\omega$ is the unique extension of $V$) there is no jump if and only if there is no defect and in this case,
\begin{equation}\label{eqM40}
[L:K]=e(\omega/V)f(\omega/V).
\end{equation}

In constrast to the good property of key polynomials of (\ref{eqM16}), we have examples of the following type for limit key polynomials. 

\begin{Example}\label{Example10} The jumps $s^{(i)}(\mathcal S)$ and total jump $s^{\rm tot}(\mathcal S)$  can be rational numbers which are not integers.
\end{Example}
We now construct such an example. Let $\mathbf k$ be an algebraically closed field and $K=\mathbf k(x)$ be a rational function field in one variable over $\mathbf k$. Let $\nu$ be the valuation of $K$ with valuation ring $R_{\nu}=\mathbf k[x]_{(x)}$ and such that $\nu(x)=1$. Let $L=K[z]/(z^3-z^2-x)\cong \mathbf k(z)$. Let $\omega$ be the extension of $\nu$ to $L$ with valuation ring $R_{\omega}=\mathbf k[z]_{(z)}$ and $\omega(z)=\frac{1}{2}$. Then $e(\omega/\nu)=2$ and $f(\omega/\nu)=1$. Thus by (\ref{eqMV3}),
$$
s^{\rm tot}(\mathcal S)=\frac{\deg_z(z^3-z^2-x)}{e(\omega/\nu)f(\omega/\nu)}=\frac{3}{2}.
$$

\section{Defectless extensions}\label{Sec6}

\begin{Lemma}\label{LemmaH} Suppose that $(K,\nu)$ is a valued field containing an algebraically closed field $\mathbf k$ such that $R_{\nu}/m_{\nu}\cong \mathbf k$ and $f(z)\in R_{\nu}[z]$ is unitary, irreducible and separable. Let $L=K[z]/(f(z))$ and let $\omega$ be an extension of $\nu$ to $L$. Let $W$ be the induced pseudo valuation on $K[z]$. Let $\overline f(z)\in K^h[z]$ be the irreducible factor of $f(z)$ which induces $\omega$ (by Lemma \ref{Lemma2}) and let $\omega^h$ be the (unique) extension of $\nu^h$ to $K^h[z]/(\overline f(z))$. Let $\overline W$ be the induced pseudo valuation on $K^h[z]$. Let $V_0=\nu$ and $W_0=\nu^h$. Then the following hold:
\begin{enumerate}
\item[1)] ${\rm gr}_{\nu^h}(R_{\nu^h})={\rm gr}_{\nu}(R_{\nu})$.
\item[2)] Set $\phi_1=z$, let $V_1=[V_0;V_1(\phi_1)=W(\phi_1)]$ and let $W_1=[\nu^h;W_1(\phi_1)=W(\phi_1)]$. Then ${\rm gr}_{W_1}(R_{\nu^h}[z])={\rm gr}_{V_1}(R_{\nu}[z])$.
\item[3)] Suppose that $V_i=[V_{i-1};V_i(\phi_i)=W(\phi_i)]$ for $1\le i\le k$ is a realization of the algorithm of Section \ref{Sec3} in $R_{\nu}[z]$ such that $W_i=[W_{i-1};W_i(\phi_i)=W(\phi_i)]$ for $1\le i\le k$ is a realization of the the algorithm of Section \ref{Sec3} in $R_{\nu^h}[z]$ and
\begin{equation}\label{eqB11}
{\rm gr}_{W_i}(R_{\nu^h}[z])={\rm gr}_{V_i}(R_{\nu}[z])\mbox{ for }1\le i\le k.
\end{equation}
Suppose that $\phi_{k+1}\in R_{\nu}[z]$ has an expression $\phi_{k+1}=\phi_k^n-c_k\phi_1^{j_1}\cdots \phi_{k-1}^{j_{k-1}}$ of the form of (\ref{eq21}), $\phi_{k+1}$ is a key polynomial for $W_k$ and 
$W_{k+1}=[W_k;W_{k+1}(\phi_{k+1})=W(\phi_{k+1})]$ is a $(k+1)$-st approximant of $\overline f$ over $W_0$.
Then $\phi_{k+1}$ is a key polynomial for $V_k$ and $V_{k+1}=[V_k;V_{k+1}(\phi_{k+1})=W(\phi_{k+1})]$ 
is a $(k+1)$-st approximant of $f$ over $V_0$. Further, ${\rm gr}_{W_{k+1}}(R_{\nu^h}[z])={\rm gr}_{V_{k+1}}(R_{\nu}[z])$.
\end{enumerate}
\end{Lemma}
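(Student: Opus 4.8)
The plan is to prove the three assertions in turn; (1) and (2) are the base of an induction on $k$ of which (3) is the inductive step. Assertion (1) rests on the classical fact that the Henselization $K\hookrightarrow K^h$ is an \emph{immediate} extension of valued fields, so $G_{\nu^h}=G_{V_0}$ and $R_{\nu^h}/m_{\nu^h}=\mathbf k$ (see \cite[\S17]{E}; cf.\ Lemma \ref{Lemma3}). Since $\nu^h|K=V_0$, the inclusion $R_\nu\hookrightarrow R_{\nu^h}$ induces a graded injection ${\rm gr}_\nu(R_\nu)\hookrightarrow{\rm gr}_{\nu^h}(R_{\nu^h})$, and for each $\gamma$ in the common value group every homogeneous component $\mathcal P_\gamma/\mathcal P_\gamma^{+}$ on either side is one dimensional over $\mathbf k$: if $g,h$ have value $\gamma$ then $g/h$ is a unit of the valuation ring and $g-\lambda h$, with $\lambda\in\mathbf k$ the residue of $g/h$, has value $>\gamma$. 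Hence the injection is onto in every degree, giving (1). For (2), with $\phi_1=z\in R_\nu[z]$ every element of $R_\nu[z]$ (resp.\ $R_{\nu^h}[z]$) is a polynomial $\sum a_jz^j$, and $V_1(\sum a_jz^j)=\min_j\{V_0(a_j)+jW(\phi_1)\}=W_1(\sum a_jz^j)$ by definition of an inductive valuation; thus $V_1=W_1|R_\nu[z]$, and ${\rm gr}_{V_1}(R_\nu[z])={\rm gr}_\nu(R_\nu)[{\rm In}(z)]$, ${\rm gr}_{W_1}(R_{\nu^h}[z])={\rm gr}_{\nu^h}(R_{\nu^h})[{\rm In}(z)]$ are polynomial rings in ${\rm In}(z)$ of the common weight $W(\phi_1)$ over their degree zero subrings, which coincide by (1). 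This is (2).

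For the inductive step (3), the guiding principle is that, granting (\ref{eqB11}), every condition entering ``$\phi_{k+1}$ is a key polynomial over $V_k$'' and ``$V_{k+1}$ is a $(k+1)$-st approximant of $f$ over $V_0$'' is, by the characterizations recalled in Section \ref{Sec2} and Subsection \ref{SubSecMA}, read off from $z$-degrees, from $G_{V_0}$ and the values $V_k(\cdot)$, from initial forms and divisibility/primality in ${\rm gr}_{V_k}(R_\nu[z])$, or from Newton polygons in $\RR\times G_{V_0}$ --- none of which changes when $(K,V_0)$ is replaced by $(K^h,\nu^h)$. Concretely, arguing as in (2) with the $\phi$-adic expansion (\ref{eqM4}) and Remark \ref{RemarkM20} (all $\phi_i\in R_\nu[z]$), one has $V_i=W_i|R_\nu[z]$ for $i\le k$, so under (\ref{eqB11}) the initial forms in ${\rm gr}_{V_k}(R_\nu[z])$ and ${\rm gr}_{W_k}(R_{\nu^h}[z])$ of any $g\in R_\nu[z]$ are identified, and such a $g$ is homogeneous / minimal / equivalence irreducible over $V_k$ exactly when it is so over $W_k$. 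Since $\phi_{k+1}$ is a key polynomial over $W_k$ and $W_{k+1}$ is a $(k+1)$-st approximant of $\overline f$, the polynomial $\phi_{k+1}$ is unitary of positive $z$-degree, homogeneous and minimal over $W_k$ hence over $V_k$, and $({\rm In}_{V_k}(\phi_{k+1}))$ is prime in ${\rm gr}_{V_k}(R_\nu[z])$: this last point follows from the second statement of Lemma \ref{Lemma1}, whose hypotheses hold because the remark following Theorem \ref{TheoremRMA} (applied over $W_k$) gives $V_k(\phi_k^{n})=V_k(c_k\phi_1^{j_1}\cdots\phi_{k-1}^{j_{k-1}})$ and because $c_k\in R_\nu$ (hypothesis on the form (\ref{eq21})) while $c_i\in R_\nu$ for $i<k$ (inductive hypothesis); hence $\phi_{k+1}$ is equivalence irreducible over $V_k$ in $R_\nu[z]$, and therefore in $K[z]$ since nonzero elements of $R_\nu[z]$ are units of $K[z]$. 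Together with $W(\phi_{k+1})>W_k(\phi_{k+1})=V_k(\phi_{k+1})$ (as $W(\phi_{k+1})$ is the key value of $\phi_{k+1}$ over $W_k$), and with conditions 5) and 6) being immediate from the shape of $\phi_{k+1}$, this shows $\phi_{k+1}$ is a key polynomial over $V_k$, so $V_{k+1}=[V_k;V_{k+1}(\phi_{k+1})=W(\phi_{k+1})]$ is a $(k+1)$-st stage homogeneous inductive valuation extending $V_0$.

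It remains to show ${\rm proj}(V_{k+1})>0$, equivalently (by (\ref{eqM15})) $\phi_{k+1}\mid f$ in $V_k$. Factor $f=\overline f\,g$ in $K^h[z]$ (Lemma \ref{Lemma2}); the cofactor $g$ lies in $R_{\nu^h}[z]$. Because $W_{k+1}$ is a $(k+1)$-st approximant of $\overline f$ we have $\phi_{k+1}\mid\overline f$ in $W_k$ (Subsection \ref{SubSecMA}), so ${\rm In}_{W_k}(\phi_{k+1})$ divides ${\rm In}_{W_k}(\overline f)$, and a fortiori ${\rm In}_{W_k}(f)={\rm In}_{W_k}(\overline f)\,{\rm In}_{W_k}(g)$, in ${\rm gr}_{W_k}(R_{\nu^h}[z])$; transporting this divisibility through (\ref{eqB11}) yields $\phi_{k+1}\mid f$ in $V_k$. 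Thus $V_{k+1}$ is a $(k+1)$-st approximant of $f$ over $V_0$. Finally, the first statement of Lemma \ref{Lemma1} presents ${\rm gr}_{V_{k+1}}(R_\nu[z])$ and ${\rm gr}_{W_{k+1}}(R_{\nu^h}[z])$ as quotients of ${\rm gr}_\nu(R_\nu)[\overline\phi_1,\ldots,\overline\phi_{k+1}]$, respectively ${\rm gr}_{\nu^h}(R_{\nu^h})[\overline\phi_1,\ldots,\overline\phi_{k+1}]$, by the \emph{same} homogeneous ideal --- the $n_i$, the $j_l(m)$, the weights $W(\phi_i)$ and the classes $\overline c_i$ of the common $c_i\in R_\nu\subset R_{\nu^h}$ all agree, the last because ``initial form'' is unambiguous by (1) --- so by (1) these quotients coincide, compatibly with the inclusion, which is (\ref{eqB11}) at level $k+1$.

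The step I expect to be most delicate is the verification that ${\rm proj}(V_{k+1})>0$, i.e.\ that $\phi_{k+1}$ equivalence divides $f$ over $V_k$: one must descend from the factor $\overline f\in K^h[z]$, which $\phi_{k+1}$ was built to approximate, to $f$ itself, using the factorisation of $f$ over $K^h$ (Lemmas \ref{Lemma2} and \ref{LemmaB6}), the primality of the initial ideal of $\phi_{k+1}$, and the identification (\ref{eqB11}). Everything else reduces to the single observation that value groups, residue fields, $z$-degrees, Newton polygons, $\phi$-adic expansions and the associated graded rings are all unaffected by Henselization.
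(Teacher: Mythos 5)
Your proof is correct and takes essentially the same route as the paper's: 1) from the immediacy of Henselization ($G_{\nu^h}=G_{\nu}$, same residue field), 2) by direct computation, and 3) by transferring the key-polynomial conditions through the identification (\ref{eqB11}), deducing ${\rm In}_{W_k}(\phi_{k+1})\mid {\rm In}_{W_k}(\overline f)\mid {\rm In}_{W_k}(f)$ to obtain the approximant property for $V_{k+1}$, and invoking Lemma \ref{Lemma1} for the graded equality at stage $k+1$. You simply supply more detail than the paper does (the one-dimensionality of the homogeneous components in 1), the explicit appeal to the second statement of Lemma \ref{Lemma1} for equivalence irreducibility, and the remark that the cofactor of $\overline f$ lies in $R_{\nu^h}[z]$).
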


\begin{proof}
Statement 1) follows since $G_{\nu^h}=G_{\nu}$ and $R_{\nu^h}/m_{\nu^h}=R_{\nu}/m_{\nu}$ by \cite[Theorem 17.19]{E}. Statement 2) follows since 
$$
{\rm gr}_{W_1}(R_{\nu^h}[z])={\rm gr}_{\nu^h}(R_{\nu^h})[{\rm in}_{W_1}(\phi_1)]={\rm gr}_{\nu}(R_{\nu})[{\rm in}_{V_1}(\phi_1)]= {\rm gr}_{V_1}(R_{\nu}[z])   .
$$

Now we will prove statement 3).  To show that $\phi_{k+1}$ is a key polynomial over $V_k$, we must verify that 1) - 6) of the definition of a key polynomial, given after (\ref{eqM2}) hold for $\phi_{k+1}$ over $V_k$. This follows since these conditions hold for  $\phi_{k+1}$ over $W_k$. The fact that $W_{k+1}$ is a $(k+1)$-st  approximant to $\overline f$ over $W_k$ implies that $\phi_{k+1}$ equivalence divides $\overline f$ in $W_{k}$. Thus ${\rm in }_{W_{k}}(\phi_{k+1})$ divides ${\rm in}_{W_{k}}(\overline f)$ in ${\rm gr}_{W_k}(R_{\nu^h}[z])$. Now ${\rm in}_{W_{k}}(\overline f)$ divides ${\rm in}_{W_{k}}(f(z))$ in ${\rm gr}_{W_k}(R_{\nu^h}[z])$. So ${\rm in}_{W_{k}}(\phi_{k+1})$ divides ${\rm in}_{W_{k}}(f(z))$ in ${\rm gr}_{W_k}(R_{\nu^h}[z])={\rm gr}_{V_k}(R_{\nu}[z])$. Thus $\phi_{k+1}$ equivalence divides $f(z)$ in $V_k$ and so $V_{k+1}$ is a $(k+1)$-st approximant to $f(z)$ over $V_k$. 
We have that $n=[G_{W_k}:G_{W_{k-1}}]=[G_{V_k}:G_{V_{k-1}}]$ as $G_{W_{k-1}}=G_{V_{k-1}}$. Finally, we have that ${\rm gr}_{V_{k+1}}(R_{\nu}[z])={\rm gr}_{W_{k+1}}(R_{\nu^h}[z])$ by Lemma \ref{Lemma1}.
\end{proof}




\begin{Theorem}\label{Theorem4*}   
Suppose that $A$ is a Nagata local domain which contains an algebraically closed field $\mathbf k$ such that $A/m_A\cong \mathbf k$. Let $K$ be the quotient field of $A$ and suppose that $V_0=\nu$ is a rank 1 valuation of $K$ which dominates $A$ and such that the residue field of the valuation ring of $V_0$ is $\mathbf k$. Suppose that 
$f(z)\in A[z]$ is unitary, irreducible and separable and $W$ is  a pseudo valuation  of $K[z]$ such that $I(W)_{\infty}=(f(z))$ in $K[z]$ which extends $V_0$. Let $\omega$ be the induced valuation of $L=K[z]/(f(z))$. Then $\omega$ is defectless over $\nu$ ($\delta(\omega/\nu)=1$) if and only if there exists a normal birational extension $A_1$ of $A$ which is dominated by $\nu$ such that there exists a realization
$$
V_1,\ldots,V_k,\ldots
$$
of the algorithm of Section \ref{Sec3} in $A_1[z]$, satisfying (\ref{eq1}) and (\ref{eq21}) for all $k$ with $c_k\in A_1$ for all $k\ge 1$, such that $W=V_k$ for some finite $k$ or $W=\lim_{k\rightarrow\infty}V_k$.

If these equivalent conditions hold, then 
there exists a positive integer $k$  such that
$$
{\rm gr}_{\omega}(A_1[z]/(f(z)))\cong {\rm gr}_{\nu}(A_1)[\overline \phi_1,\ldots,\overline \phi_k]/
I
$$
where
$$
I=(\overline \phi_1^{n_1}-\overline c_1,\overline \phi_2^{n_2}-\overline c_2\overline\phi_1^{j_1(2)},\ldots,\overline \phi_k^{n_k}-\overline c_k\overline\phi_1^{j_1(k)}\overline\phi_2^{j_2(k)}\cdots\overline \phi_{k-1}^{j_{k-1}(k)})
$$
is a finitely generated and presented ${\rm gr}_{\nu}(A_1)$-module.
\end{Theorem}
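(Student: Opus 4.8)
\emph{Overall plan.} The statement splits into the equivalence and the structural formula, and I would handle the equivalence first. The idea is to transport everything to the Henselization $(K^h,\nu^h)$: by Lemma \ref{Lemma2} the separable $f$ factors as $f=f_1\cdots f_r$ in $R_{\nu^h}[z]$, the valuation $\omega$ is induced by one factor $\overline f$, and $\omega^h$ is the \emph{unique} extension of $\nu^h$ to $K^h[z]/(\overline f)$, with $[L^h:K^h]=\deg_z\overline f$ and $\delta(\omega/V_0)=\deg_z\overline f/(e(\omega/V_0)f(\omega/V_0))$ by (\ref{eqefd}). Since $\mathbf k$ is algebraically closed, $f(\omega/V_0)=1$. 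The bridge between the two fields is Lemma \ref{LemmaH} (parallel construction over $\nu$ and $\nu^h$, with ${\rm gr}_{\nu^h}(R_{\nu^h})={\rm gr}_{V_0}(R_{V_0})$), and the bridge to $A_1$ is Lemma \ref{Lemma3} together with the Nagata hypothesis ($R_{V_0}=\bigcup_iA_i$ over a directed system of normal birational extensions). The structural formula is then read off from Lemma \ref{Lemma1} and a limit version of it.

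\emph{``Realization $\Rightarrow$ defectless.''} Suppose we have a realization $V_1,\ldots,V_k,\ldots$ in $A_1[z]$ with $W=V_k$ for finite $k$, or $W=\lim_kV_k$; in the infinite case Lemma \ref{Lemma4} gives $\lim_kV_k(\phi_k)=\infty$. In either case $\mathcal S=\{V_1,\ldots,V_k\}$ (resp. $\mathcal S=\mathcal S^{(1)}=\{V_1,V_2,\ldots\}$) is an admissible family determining $W$ built from a \emph{single} simple admissible family, i.e. with no jump, so $s^{\rm tot}(\mathcal S)=1$. Then (\ref{eqMV3}) gives $\deg_zf=e(\omega/V_0)f(\omega/V_0)$, whence $\delta(\omega/V_0)=\deg_z\overline f/(e(\omega/V_0)f(\omega/V_0))\le\deg_zf/(e(\omega/V_0)f(\omega/V_0))=1$, so $\delta(\omega/V_0)=1$.

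\emph{``Defectless $\Rightarrow$ realization.''} Assume $\delta(\omega/V_0)=1$. Over $(K^h,\nu^h)$ the extension $\omega^h$ is unique, so by (\ref{eqMV5}) every admissible family realizing the induced pseudo valuation $\overline W$ of $K^h[z]$ has total jump $\delta(\omega^h/\nu^h)=\delta(\omega/V_0)=1$, i.e. $\overline W$ is realized by a single jump-free sequence of approximants to $\overline f$ over $\nu^h$; running the algorithm of Section \ref{Sec3} over $\nu^h$ (Theorem \ref{TheoremRMA}) produces such a sequence $W_1,\ldots$ with nice-form key polynomials. At each step the homogeneous element one must realize as ${\rm In}(c_i)$ lies in ${\rm gr}_{\nu^h}(R_{\nu^h})={\rm gr}_{V_0}(R_{V_0})$, hence is the initial form of an element of $R_{V_0}$, so the $c_i$ and the $\phi_i$ may be taken in $R_{V_0}[z]$; Lemma \ref{LemmaH}(3) then converts this into a realization $V_1,\ldots$ over $V_0$ of $W$ with the same key polynomials. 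It is finite, with $\phi_k=f$ and $W=V_k$, if $\omega$ is the unique extension of $V_0$ (in which case $\overline f=f$), and otherwise infinite with $\deg_z\phi_k$ stabilizing at $\deg_z\overline f$ and, by Lemma \ref{Lemma4}, $\lim_kV_k(\phi_k)=\infty$, so $W=\lim_kV_k$. Finally one descends the coefficients to a single normal birational $A_1$ in the directed system: the finitely many key polynomials produced before the degrees stabilize, and their $c_i$, lie in one $A_1$, and the remaining $c_i$ can be chosen there as well because — as the computation below shows — ${\rm gr}_\omega(A_1[z]/(f(z)))$ is already finitely generated over ${\rm gr}_{V_0}(A_1)$, so only finitely many new values occur. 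I expect this last finiteness (controlling, as Example \ref{Ex1} shows can be forced, the appearance of elements with values outside $S^{A}(V_0)$ in an infinite realization, inside a single $A_1$) to be the main obstacle.

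\emph{Structure of the associated graded ring.} In every case $W$ is a pseudo valuation of $A_1[z]$ with $I(W)_\infty=(f)$, so its filtration induces the $\omega$-filtration on $A_1[z]/(f)$ and ${\rm gr}_\omega(A_1[z]/(f))={\rm gr}_W(A_1[z])$. If the realization terminates with $\phi_k=f$, Lemma \ref{Lemma1} at index $k$ gives ${\rm gr}_{V_k}(A_1[z])\cong{\rm gr}_{V_0}(A_1)[\overline\phi_1,\ldots,\overline\phi_k]/I$; since $f=\phi_{k-1}^{n_{k-1}}-c_{k-1}\phi_1^{j_1(k-1)}\cdots\phi_{k-2}^{j_{k-2}(k-1)}$ vanishes in $A_1[z]/(f)$ and its two terms have equal $W$-value, $\overline\phi_{k-1}^{n_{k-1}}=\overline c_{k-1}\overline\phi_1^{j_1(k-1)}\cdots\overline\phi_{k-2}^{j_{k-2}(k-1)}$ holds and $\overline\phi_k=0$, yielding the presentation of the asserted shape on $\overline\phi_1,\ldots,\overline\phi_{k-1}$. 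If the realization is infinite, fix $k_0$ with $\deg_z\phi_i=\deg_z\phi_{k_0}$ for $i\ge k_0$, so $n_i=1$ and $\phi_{i+1}=\phi_i-c_i\phi_1^{j_1(i)}\cdots\phi_{i-1}^{j_{i-1}(i)}$ with $W(\phi_{i+1})>W(\phi_i)=W(c_i\phi_1^{j_1(i)}\cdots\phi_{i-1}^{j_{i-1}(i)})$; hence ${\rm In}_W(\phi_i)=\overline c_i\overline\phi_1^{j_1(i)}\cdots\overline\phi_{i-1}^{j_{i-1}(i)}$ in ${\rm gr}_W(A_1[z])$ for all $i\ge k_0$, and by descending induction each ${\rm In}_W(\phi_i)$ with $i\ge k_0$ lies in the ${\rm gr}_{V_0}(A_1)$-subalgebra generated by ${\rm In}_W(\phi_1),\ldots,{\rm In}_W(\phi_{k_0-1})$. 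Using the expansions (\ref{eqM4}), whose $\phi_i$-exponents vanish for $i\ge k_0$, every initial form of an element of $A_1[z]$ prime to $f$ is a polynomial over ${\rm gr}_{V_0}(A_1)$ in those finitely many classes, the relation ${\rm In}_W(\phi_{k_0-1})^{n_{k_0-1}}=\overline c_{k_0-1}\overline\phi_1^{j_1(k_0-1)}\cdots\overline\phi_{k_0-2}^{j_{k_0-2}(k_0-1)}$ is forced as above, and a rank count using the freeness in the proof of Lemma \ref{Lemma1} (the candidate quotient is ${\rm gr}_{V_0}(A_1)$-free of rank $n_1\cdots n_{k_0-1}=e(\omega/V_0)f(\omega/V_0)=[{\rm QF}({\rm gr}_\omega(A_1[z]/(f))):{\rm QF}({\rm gr}_{V_0}(A_1))]$) shows there are no further relations. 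In both cases one obtains the stated isomorphism (with the integer $k$ of the statement equal to $k-1$, resp. $k_0-1$), and the quotient is a free, in particular finitely generated and presented, ${\rm gr}_{V_0}(A_1)$-module.
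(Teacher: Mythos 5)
Your overall strategy (Henselization via Lemma \ref{Lemma2}, transfer by Lemma \ref{LemmaH}, jump/defect bookkeeping via (\ref{eqMV3})--(\ref{eqMV5}), presentation via Lemma \ref{Lemma1}) is the paper's strategy, but there are two genuine gaps, one of which is an actual error. In the direction ``realization $\Rightarrow$ defectless'', your treatment of the infinite case is wrong: the family $\{V_1,V_2,\ldots\}$ with $W=\lim_k V_k$ is \emph{not} a jump-free admissible family determining $W$. As the paper's own analysis in Subsection \ref{SubSecMV} shows, to obtain an admissible family one must adjoin the limit augmented valuation $[\mathcal C;\,\mu(f)=\infty]$, whose limit key polynomial is $f$ itself, producing the jump $s^{(1)}=\deg_z f/\deg_z\phi_{k_0}$ (where $\deg_z\phi_{k_0}$ is the stabilized degree). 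Your conclusion $s^{\rm tot}(\mathcal S)=1$, hence $\deg_zf=e(\omega/V_0)f(\omega/V_0)$, would force $\omega$ to be the \emph{unique} extension of $V_0$, which the theorem does not assume (and which fails in cases the theorem covers); so the argument proves something false and cannot be repaired by bookkeeping alone. What is actually needed, and what the paper does, is an analytic step: writing $\phi_{k_0+i}=\phi_{k_0}-\tau_i$ and grouping the coefficients with respect to the monomials $\phi_1^{b_1}\cdots\phi_{k_0-1}^{b_{k_0-1}}$, one shows these coefficient sequences are $\nu$-Cauchy, so the key polynomials converge to a monic $\phi_\infty\in\hat K[z]$ of degree $\deg_z\phi_{k_0}$ with $\hat W(\phi_\infty)=\infty$; hence $\overline f\mid\phi_\infty$ in $\hat K[z]$, giving $\deg_z\overline f\le\deg_z\phi_{k_0}=[G_\omega:G_{V_0}]$ and $\delta(\omega/\nu)=1$ by (\ref{eqefd}). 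This step is entirely absent from your proposal.

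In the direction ``defectless $\Rightarrow$ realization'', replacing each $c_i\in R_{\nu^h}$ by an element of $R_{V_0}$ with the \emph{same initial form} is too weak: changing $c_i$ by an error of value only slightly above $V_i(\phi_i^{n_i})$ can lower the value of the new $\phi_{i+1}$ below $W(\phi_{i+1})$ and derail the approximation. The paper instead approximates the Henselian coefficients modulo a high power $m_{A_l^h}^n$ chosen so that the error value exceeds the target value $\gamma$, using that $A_l\rightarrow A_l^h$ is unramified with trivial residue extension; and, crucially, once $\overline f\in\Sigma(W_k)$ the algorithm over $K^h$ ends with the polynomial $\overline f$, whose coefficients need not lie in $K$, so the last step must be replaced by an infinite tail $\psi_i\in A_l[z]$ approximating $\overline f$, all inside one fixed $A_l$, with $\overline W(\psi_i)\to\infty$ because a Noetherian local ring dominated by a rank~1 valuation admits only finitely many values below any bound. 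This simultaneously settles the single-$A_1$ issue you flag as ``the main obstacle''; your proposed justification (that ${\rm gr}_\omega(A_1[z]/(f))$ is already finitely generated, ``so only finitely many new values occur'') is circular, since that finite generation is part of the conclusion and in any case would not place the $c_i$ in $A_1$. Your argument for the final presentation in the infinite case is a reasonable sketch, but it rests on the two unproved points above.
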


An example showing that the conclusions of Theorem \ref{Theorem4*} may not hold if $\nu$ has rank larger than one will be given in Section \ref{SecNEX}. In Example \ref{Ex201}, it will be shown that the conclusions of Theorem \ref{Theorem4*} may not hold if $f(z)$ is not separable over $K$.

\begin{proof} First suppose that  $\delta(\omega/\nu)=1$. 
Let notation be as in Section \ref{Sec5}. By Lemma \ref{Lemma2}, there exists an extension $\overline W$ of $\nu^h$ to a pseudo valuation of $K^h[z]$, such that $I(\overline W)_{\infty}=(\overline f)$ where $\overline f(z)$ is an irreducible factor of $f(z)$ in $K^h[z]$, and
 $\overline W$ is an extension of $W$. 
 
 We will construct a special sequence of approximants $W_1,\ldots,W_{k_0}$ 
       to $\overline f$ over $\nu^h$  such that $\overline W=W_{k_0}$.
       In particular,
 $$
 W_{k_0}=[W_{k_0-1};W_{k_0}(\phi_{k_0})=\infty]
 $$
  where $\phi_{k_0}=\overline f$.

 Set $\phi_1=z$ and let $W_1=[\nu^h;W_1(\phi_1)=W(\phi_1)]$.  
 Suppose by induction on $k$ that we have constructed a sequence of approximants to $\overline f$ over $\nu^h$, 
 $$
 W_1,\ldots,W_k
 $$
 giving a realization of the algorithm of Section \ref{Sec3}, such that expressions
 $$
 \phi_i=\phi_{i-1}^{n_{i-1}}-c_{i-1}\phi_1^{j_1(i-1)}\cdots \phi_{i-2}^{j_{i-2}(i-1)}
 $$
 of the form of  \ref{eq21}) hold for $i\le k$ with $c_i\in R_{\nu}$ for $i\le k-1$. After replacing $A$ with a birational extension  $A_1$ of $A$, we may suppose that $c_i\in A$ for $i\le k-1$.
 
 If $\Lambda(W_{k})$ does not have a largest element, then we have a jump $s^{(1)}>1$ by (\ref{eqMV4}) and the analysis of this case in Subsection \ref{SubSecMV}. But by (\ref{eqMV3}) and (\ref{eqMV5}), there cannot be a jump, and we have a contradiction, showing that $\Lambda(W_{k})$ has a largest element.

Suppose we are in the case where $\Lambda(W_{k})$ has a maximal element $\gamma\ne\infty$ and $\phi'\in \Sigma(W_{k})$ is a corresponding key polynomial.  
 We will modify the resulting sequence (\ref{eqMV2}) of the  analysis in Subsection \ref{SubSecMV}, which we will write as
\begin{equation}\label{eqND100}
W_1,\ldots,W_{k},W_{k+1}',\ldots,W_{k+r}'
\end{equation}
by modifying the $\psi_i$ of (\ref{eqMV1}), replacing the $a_i$ with suitable $b_i\in R_{\nu}$   for $1\le i\le r$. With the notation of Lemma \ref{Lemma3}, since $a_1,\ldots,a_r\in R_{\nu^h}$, there exists $A_l$ such that $a_i\in A_l^h$ for $1\le i\le r$ and $\phi_1,\ldots,\phi_{k_0}\in A_l[z]$. Thus, since $\overline W$ induces a rank 1 valuation on $K^h[z]/(\overline f(z))$, there exists $n\in \ZZ_+$ such that $nV_0(m_{A_l^h})>\overline W(\phi')=\gamma$.
Now $A_l\rightarrow A_l^h$ is unramified with no residue field extension, so there exists $b_i\in A_l$  such that $a_i-b_i\in m_{A_l^h}^n$ for $1\le i\le r$. Thus $V_0(b_i)-\nu^h(a_i)>\overline\omega(\phi')$ for $1\le i\le r$ and we can replace $\psi_i$ with $\psi_{i-1}+b_i\phi_1^{j_1(i)}\cdots\phi_{k-1}^{j_{k-1}(i)}$ in (\ref{eqMV1})  for $1\le i\le r$, to produce a sequence (\ref{eqND100}) with $\psi_i\in R_{\nu}[z]$ for all $i$.
We then have a corresponding sequence to (\ref{eqND100}),
$$
V_1,\ldots,V_{k},V_{k+1}',\ldots,V_{k+r}'
$$
of approximants to $f$ over $V_0$ by Lemma \ref{LemmaH}.

 Now we can continue, using  the algorithm of Section \ref{Sec3},
applying the above argument as necessary until we reach  $W_k$ such that  the maximal element of $\Lambda(W_{k})$ is $\infty$, so that $\overline f\in \Sigma(W_{k})$.

With this assumption, there exists $l$ (with the notation of Lemma \ref{Lemma3}) such that the coefficients of $\overline f$ are in $A_l^h$ and the coefficients of $\phi_1,\ldots,\phi_{k}$ are in $A_l$. We have $\overline f=\phi_{k}+h$ where $h\in A_l^h[z]$ and $\deg_zh<\deg_z\phi_{k}$. Set $\psi_0=\phi_{k}$. By induction, we may construct a sequence $\psi_i\in A_l[z]$  of monic poynomials with $\deg_z\psi_i=\deg_z\phi_{k}$, such that for all $i$,
$\overline f=\psi_i+h_i$ with $h_i\in (A_l)^h[z]$ a polynomial of degree $<\deg_z\phi_{k}$ and 
$$
\psi_{i+1}=\psi_i+b_i\phi_1^{\sigma_1(i)}\cdots\phi_{k_0-1}^{\sigma_{k_0-1}(i)}
$$
with $b_i\in A_l$ and $0\le \sigma_j(i)<n_j$ for $1\le j\le k_0-1$ such that $\overline W(\psi_{i+1})>\overline W(\psi_i)$ for all $i$.  Since $A_l$ is Noetherian, and $\overline W$ induces a  rank 1 valuation on $K^h[z]/(\overline f(z))$, we have that $\overline W$ takes on $A_l[z]$ only a finite number of values which are less than or equal to a given finite upper bound.  Thus
we either obtain that $\psi_i=\overline f(z)$ for some $i$, or that 
$$
\lim_{i\rightarrow\infty} W(\psi_i)=\lim_{i\rightarrow\infty}\overline W(\psi_i)=\infty.
$$
 
 By Lemma \ref{LemmaH}, inductively defining $V_i=[V_{i-1};V_i(\phi_i)=W(\phi_i)]$ for $1\le i\le k$ and $V_{i+k}=[V_{i+k-1};V_i(\psi_i)=W(\psi_i)]$ for $k<i$, we construct a sequence
 $$
 V_1,\ldots,V_k,\ldots
 $$
 of approximants to $f(z)$ over $V_0$ such that $\lim_{i\rightarrow \infty}V_i(\phi_i)=\infty$, so that $W=\lim_{i\rightarrow \infty}V_i$ by Lemma \ref{Lemma4}.

 Now suppose there exists a normal birational extension $A_1$ of $A$ and a realization $V_1,\ldots, V_k,\ldots$ of the algorithm of Section \ref{Sec3} as  in the statement of the theorem. We will show that the defect $\delta(\omega/\nu)=1$.
 
 First suppose that the sequence is of finite length, terminating with $V_k=W$, so that the last key polynomial is $\phi_k=f$ (with $V_k(\phi_k)=\infty$). We have that $\deg_z\phi_1=1$ and $\deg_z\phi_i=n_{i-1}\deg_z\phi_{i-1}$ for $i\ge 2$. Thus
 $$
[G_{\omega}:G_{V_0}]\delta(\omega/V_0)\le \deg_zf=n_1n_2\cdots n_{k-1}=[G_{\omega}:G_{V_0}]
 $$
 so that $\delta(\omega/\nu)=1$.
 
 Now suppose that $V_1,\ldots,V_k,\ldots$ is of infinite length. We have (by Lemma \ref{LemmaB6}) natural  extensions of valued fields
 $$
 (K,\nu)\rightarrow (K^h,\nu^h)\rightarrow (\hat K,\hat\nu).
 $$
 Let $\overline f(z)$ be the irreducible factor of $f(z)$ in $K^h[z]$ which induces $\omega$ (from Lemma \ref{Lemma2}). Then $\overline f(z)$ is irreducible in $\hat K[z]$  (by Lemma \ref{LemmaB6}) and so is the irreducible factor of $f(z)$ in $\hat K[z]$ which induces $\omega$ (by Lemma \ref{LemmaB3}). 
 Thus the pseudo valuation $W$ extends to a pseudo valuation $W^h$ of $K^h[z]$ and to a pseudo valuation $\hat W$ of $\hat K[z]$  such that $I(W^h)_{\infty}=(\overline f(z))$ in $K^h[z]$ and $I(\hat W)_{\infty}=(\overline f(z))$ in $\hat K[z]$. By (\ref{eqefd}), 
 \begin{equation}\label{B1}
 \delta(\omega/\nu)=[L^h:K^h]/[G_{\omega}:G_{\nu}]=\deg_z\overline f/[G_{\omega}:G_{\nu}].
 \end{equation}

There exists $k_0$ such that $\deg_z\phi_k=\deg_z\phi_{k_0}$ for $k\ge k_0$. 
There exist $a_i\in A_1$ and $j_1(i),\ldots,j_{k_0-1}(i)$ with $0\le j_l(i)<n_l$ for $1\le l\le k_0-1$ such that
$$
\phi_{k_0+i+1}=\phi_{k_0+i}-a_i\phi_1^{j_1(i)}\cdots\phi_{k_0-1}^{j_{k_0-1}(i)}
$$
for $i\ge 0$. 
Now 
$$
W(\phi_{k_0+i})=W(a_i\phi_1^{j_1(i)}\cdots\phi_{k_0-1}^{j_{k_0-1}(i)})
$$
for $i>0$ and 
\begin{equation}\label{B2}
W(\phi_{k_0+i})\mapsto \infty\mbox{ as }i\mapsto \infty
\end{equation}
by Lemma \ref{Lemma4}. Thus $\nu(a_i)\mapsto \infty$ as $i\mapsto \infty$.

For fixed $(b_1,\ldots,b_{k_0-1})$ such that $0\le b_l< n_l$ for $1\le l\le k_0-1$, define
$$
c_l(b_1,\ldots,b_{k_0-1})=\sum a_i,
$$
 where the sum is over $i<l$ such that 
$(j_1(i),\ldots,j_{k_0-1}(i))=(b_1,\ldots,b_{k_0-1})$.
Let 
$$
\tau_i=\sum _{b_1,\ldots,b_{k_0-1}}c_i(b_1,\ldots,b_{k_0-1})\phi_1^{b_1}\cdots\phi_{k_0-1}^{b_{k_0-1}}
$$
where the sum is over $b_1,\ldots, b_{k_0-1}$ such that $0\le b_j<n_j$ for $1\le j\le k_0-1$.
We have that $\phi_{k_0+i}=\phi_{k_0}-\tau_i$. Thus
$$
W(\tau_j-\tau_i)=W(\phi_{k_0+i}-\phi_{k_0+j})\ge\min\{W(\phi_{k_0+i}),W(\phi_{k_0+j})\}
$$
so $W(\tau_j-\tau_i)\mapsto \infty$ as $j\ge i\mapsto \infty$. We have that 
$$
W(\tau_j-\tau_i)=\min\{\nu(c_i(b_1,\ldots,b_{k_0-1})-c_j(b_1,\ldots,b_{k_0-1})) + W(\phi_1^{b_1}\cdots\phi_{k_0-1}^{b_{k_0-1}})\}
$$
where the minimum is over $b_1,\ldots,b_{k_0-1}$ with $0\le b_j<n_j$ for $1\le j\le k_0-1$.
So for all $b_1,\ldots,b_{k_0-1}$, $\nu(c_i(b_1,\ldots,b_{k_0-1})-c_j(b_1,\ldots,b_{k_0-1})) \mapsto \infty$ 
as $j\ge i\mapsto \infty$. Thus for each $b_1,\ldots,b_{k_0-1}$, $(c_i(b_1,\ldots,b_{k_0-1}))$ is a $\nu$-Cauchy sequence.  


Thus these sequences have limits in $\hat K$, and so $(\phi_{k_0+i})$ is a $\nu$-Cauchy sequence in $K[z]$ which has a non zero limit $\phi_{\infty}$ in $\hat K[z]$ ($\phi_{\infty}$ is necessarily unitary of degree equal to $\deg_z\phi_{k_0}$). Now $\phi_{\infty}\in I(\hat W)_{\infty}=(\overline f)$ by  (\ref{B2}). Thus $\deg_z\phi_{\infty}\ge \deg_z\overline f$.
Now
$$
\deg_z\overline f\le \deg_z\phi_{\infty}=\deg_z\phi_{k_0}=[G_{V_{k_0}}:{G_{V_0}}]=[G_{\omega}:G_{\nu}].
$$
Thus  $\deg_z\overline f=[G_{\omega}:G_{\nu}]$ and $\delta(\omega/\nu)=1$ by (\ref{B1}).
\end{proof}

\begin{Example}\label{Ex201}
The conclusions of Theorem \ref{Theorem4*} may fail if $f(z)$ is not separable over $K$.
\end{Example}

An example of F.K. Schmidt of a discrete valuation ring (with value group $\ZZ$) and an inseparable extension of  its quotient field which has defect is explained in \cite[Example 3.1]{K4}. The example is as follows. Let $\mathbf k$ be an algebraically closed field of characteristic $p>0$, $A=\mathbf k[x,y]_{(x,y)}$ be the localization of a polynomial ring in two variables  and $K$ be the quotient field of $A$. Let $\mathbf k[[t]]$ be a power series ring and let $s\in \mathbf k[[t]]$ be transcendental over $\mathbf k(t)$ and such that ${\rm ord}_t(s)>0$. The $\mathbf k$-algebra embedding $K\rightarrow \mathbf k[[t]]$ defined by $x\mapsto t$ and $y\mapsto s^p$ induces a valuation $\nu$ on $K$ which dominates $A$ by $\nu(g(x,y))={\rm ord}_t(g(t,s^p))$. We have that $G_{\nu}=\ZZ$ and $R_{\nu}/m_{\nu}=\mathbf k$. Let $f(z)=z^p-y\in K[z]$. There is a unique extension of $\nu$ to a valuation $\omega$ of $L=K[z]/(f(z))$ (since $L$ is purely inseparable over $K$) which is an  immediate extension of  $\nu$ ($G_{\omega}=G_{\nu}$ and $R_{\omega}/m_{\omega}=R_{\omega}/m_{\omega}$). Thus the defect $\delta(\omega/\nu)=\deg_zf=p$ by Ostrowski's lemma (\ref{eqN300}).  Since $\nu$ is a rank 1 discrete valuation, by MacLane's theorem (Section \ref{Sec2}), $\omega$ is a limit valuation which is realized by his algorithm. We will give an explicit construction. 

 Let $W$ be the pseudo valuation induced by $\omega$ on $K[z]$, and let $V_0=\nu$. We will construct a sequence of approximants $V_1,\ldots,V_i,\ldots$ to $f$ over $V_0$ which realize $W$.

 Expand
 $s=\sum_{i=1}^{\infty} a_it^i$ with $a_i\in \mathbf k$. We have that $s^p=\sum_{i=1}^{\infty}a_i^pt^{ip}$.
Define 
$$
\sigma(1)={\rm ord}_t(s)=\min\{i\mid a_i\ne 0\}
$$
 and
 for $j>1$, 
 $$
 \sigma(j)=\min\{i\mid \sigma(j-1)<i\mbox{ and }a_i\ne 0\}.
 $$
 
 The first approximant is $V_1=[V_0;V_1(\phi_1)=\sigma(1)]$ where $\phi_1=z$. For $i\ge 1$, $V_{i+1}$ is defined by 
 $V_{i+1}=[V_i;V_{i+1}(\phi_{i+1})=\sigma(i+1)]$, where $\phi_{i+1}=\phi_i-a_{\sigma(i+1)}x^{\sigma(i+1)}$.  Then 
 $\lim_{i\rightarrow \infty}V_i(\phi_i)=\infty$ and so $W$ is the limit valuation $W=\lim_{i\rightarrow\infty}V_i$ by Lemma \ref{Lemma4}.

\section{A Rank 1 Separable Example with Defect}\label{Sec8}

We consider an example from \cite[Theorem 7.38]{CP}, with regard to the algorithm of Section \ref{Sec3}. Let $\mathbf k$ be an algebraically closed field of characteristic $p>0$. Let $K=\mathbf k(u,v)$ be a two dimensional rational function field over $\mathbf k$, and, using the method of \cite{Sp} and \cite{CV1}, define a valuation $\nu$ of $K$ by the following generating sequence:
$$
P_0=u, P_1=v, P_2=v^{p^2}-u\mbox{ and }P_{i+1}=P_i^{p^2}-u^{p^{2i-2}}P_{i-1}\mbox{ for }i\ge 2.
$$
We normalize the valuation $\nu$ so that $\nu(u)=1$. We have  the defining relations $v^{p^2}\sim u$ in $\nu$ and $P_i^{p^2}\sim u^{p^{2i-2}}P_{i-1}$ in $\nu$ for $i \ge 2$.
As shown in \cite{CP}, the value group $G_{\nu}=\frac{1}{p^{\infty}}\ZZ=\bigcup_{i\ge 1}\frac{1}{p^i}\ZZ$. Let $f=x^p+ux^{p-1}-u\in K[x]$. By \cite[Theorem 7.38]{CP}, $\nu$ has a unique extension to a valuation $\omega$ of $L=K[x]/(f(z))$. Further, $\omega$ is an immediate extension, so it is a defect extension with $[L:K]=\delta(\omega/\nu)=[L:K]=p$.

Let $W$ be the pseudo valuation induced by $\omega$ on $K[x]$. We will construct a realization of the algorithm of Section \ref{Sec3}, giving an infinite sequence of approximants to $f$ over $V_0=\nu$,
$$
V_1,\ldots,V_k,\ldots
$$
satisfying (\ref{eq1}) and (\ref{eq21}) with $c_i\in R_{\nu}$ for all $i$. 

Setting $\phi_1=x$, we have that $N(V_0,\phi_1)$ has a single segment, which has the slope $\frac{V_0(u)}{p}=\frac{1}{p}$. Thus the first approximant to $f$ over $V_0$ is $V_1=[V_0;V_1(\phi_1)=\frac{1}{p}]$.

We will make use of the following observation when constructing our sequence of approximants. Suppose we have constructed the sequence $V_1,\ldots,V_k$ of approximants, where $\deg_x\phi_i=1$ for all $i$. Then for $i\le k$, $\phi_i=\phi_{i-1}+a_{i-1}$ with $a_{i-1}\in R_{\nu}$ and
$$
W(\phi_i)>W(\phi_{i-1})=\nu(a_{i-1}).
$$
Expanding
$$
ux^{p-1}=g_{p-1}\phi_k^{p-1}+g_{p-2}\phi_k^{p-2}+\cdots+g_0
$$
with $g_i\in R_{\nu}$, we have that
\begin{equation}\label{eqE1*}
V_0(g_0)\ge \min\{W(g_i\phi_k^i)\}=V_k(ux^{p-1})=W(ux^{p-1})=1+\frac{p-1}{p}>\frac{p^4}{p^4-1}.
\end{equation}
Now $f\sim \phi_1^p-u$ in $V_1$ and $u=v^{p^2}-P_2\sim v^{p^2}$ in $V_0$. Thus $f\sim (\phi_1-v^p)^p$ in $V_1$, and we take our second key polynomial to be $\phi_2=\phi_1-v^p=\phi_1-P_1^p$. We thus have that the second approximant is $V_2=[V_1;V_2(\phi_2)=W(\phi_2)]$.
Expanding
$$f=\phi_2^p+f_{p-1}\phi_2^{p-1}+\cdots+f_1\phi_2+f_0
$$
with $f_i\in R_{\nu}$, by (\ref{eqE1*}) with $k=2$, we have that 
$$
f_0=v^{p^2}-u+\mbox{ terms of value }>\frac{p^4}{p^4-1}
$$
and $v^{p_2}-u=P_2$. Now since $\omega$ is the unique extension of $\nu$, we have that the principal part of $N(V_1;\phi_2)$ is $N(V_1,\phi_2)$ and $N(V_1,\phi_2)$ has a single segment, which has slope 
$\frac{V_0(f_0)}{p}=\frac{V_0(P_2)}{p}=\frac{1}{p}+\frac{1}{p^5}$, which is less than $\frac{p^4}{p^4-1}$.
The second approximant is $V_2=[V_1;V_2(\phi_2)=\frac{1}{p}+\frac{1}{p^5}]$.

By (\ref{eqE1*}), we have that $f\sim \phi_2^p+P_2$ in $V_2$. Now $P_2\sim \frac{P_3^{p^2}}{u^{p^4}}$ in $V_0$, so $$
f\sim \left(\phi_2+\frac{P_3^p}{u^{p^3}}\right)^p\mbox{ in }V_2,
$$
and so $\phi_3=\phi_2+\frac{P_3^p}{u^{p^3}}$ is a key polynomial for $V_2$. We thus have that the third approximant is $V_3=[V_2;V_3(\phi_3)=W(\phi_3)]$. We expand
$$
f=\phi_3^p+f_{p-1}\phi_3^{p-1}+\cdots+f_1\phi_3+f_0
$$
with $f_i\in R_{\nu}$. By (\ref{eqE1*}), we have that 
$$
f_0=-\frac{P_3^{p^2}}{u^{p^4}}+P_2+\mbox{ terms of value }>\frac{p^4}{p^4-1}.
$$
Also, 
$$
-\frac{P_3^{p^2}}{u^{p^4}}+P_2=-\frac{P_4}{u^{p^4}}.
$$
Now since $\omega$ is the unique extension of $\nu$, we have that the principal part of $N(V_2,\phi_3)$ is $N(V_2,\phi_3)$ and $N(V_2,\phi_3)$ has a single segment, which has slope
$$
\frac{V_0(f_0)}{p}=\frac{1}{p}V_0\left(-\frac{P_4}{u^{p^4}}\right)=\frac{1}{p}+\frac{1}{p^5}+\frac{1}{p^9}<\frac{p^4}{p^4-1}.
$$
The third approximant is $V_3=[V_2;V_3(\phi_3)=\frac{1}{p}+\frac{1}{p^5}+\frac{1}{p^9}]$.

Continuing in this way, we construct an infinite sequence of approximants $V_1,\ldots,V_k,\ldots$ to $f$ over $V_0$ with
$V_k=[V_{k-1};V_k(\phi_k)=\mu_k]$ and for $k\ge 3$,
$$
\phi_k=\phi_{k-1}+(-1)^{k-1}\frac{P_{2(k-2)+1}^p}{u^{p^3+p^7+\cdots+p^{4(k-3)+3}}}
$$
with
$$
\mu_k=\frac{1}{p}+\frac{1}{p^5}+\frac{1}{p^9}+\cdots+\frac{1}{p^{4(k-1)+1}}.
$$
We have that 
\begin{equation}\label{eqEx10}
\lim_{k\rightarrow \infty}\mu_k=\frac{p^4}{p(p^4-1)}.
\end{equation}
 In particular, we have by Lemma \ref{Lemma4}, that the limit valuation $V_{\infty}=\lim_{k\rightarrow\infty}V_k$ is a valuation, and thus is not equal to $W$.

 We observe that there does not exist a birational extension $A_1$ of $\mathbf k[u,v]_{(u,v)}$ which is dominated by $V_0$ such that $\phi_i\in A_1[x]$ for all $i$, as there can only be finitely many values of elements in a  Noetherian local ring which is dominated by a rank 1 valuation that are less than a fixed finite bound.  

We now analyze the extension $W$ of $\nu$ in the context of Vaqui\'e's algorithm. We will construct an admissible family of valuations $\mathcal S$ which determines $W$.

In the above realization of the  algorithm of Section \ref{Sec3}, we started by defining    $\phi_1=x$, and $V_1=[V_0;V_1(\phi_1)=\frac{1}{p}]$.
With the notation of Section \ref{SecV}, we have 
$$
\Sigma(V_1)=\{x-g\mid g\in K\mbox{ and }W(x-g)>W(x)\}
$$
and 
$$
\Lambda(V_1)=\{W(\phi)\mid \phi\in \Sigma(V_1)\}.
$$
Let $(\mu_{\alpha})_{\alpha\in C}$ be the associated family of iterated augmented valuations to $\mathcal A=\{V_1\}$ of (\ref{eqV3}).   

The concept of distance of an element of $L$ from $K$ and the concepts of dependent and independent Artin-Schreier  extensions are introduced in \cite{K3}. In \cite{EG}, our extension $\omega$ of $\nu$ is analyzed, and it is shown that it is a dependent Artin-Schreier extension. We will  make use of a calculation in their proof, to determine $\limsup\{\Lambda(V_1)\}$. Suppose that $g\in \Sigma(V_1)$. Then $W(x-g)>W(x)$ so $\nu(g)=W(x)=\frac{1}{p}$. Thus $g\in R_{\nu}$, and by 2) of \cite[Theorem 4.4]{EG}, we have that
$$
W(g^p-x^p) \le 1+\frac{1}{p^4}+\cdots+\frac{1}{p^{4(k+1)}}
$$
for some $k\ge 0$. Thus 
$$
W(x-g)=\frac{1}{p}W(g^p-x^p)\le \frac{1}{p}+\frac{1}{p^5}+\cdots+\frac{1}{p^{4(k+1)+1}}<\frac{p^4}{p(p^4-1)}.
$$
By (\ref{eqEx10}), we have that $\limsup\{\Lambda(V_1)\}=\frac{p^4}{p(p^4-1)}$ and $\frac{p^4}{p(p^4-1)}\not\in \Lambda(V_1)$.
In particular, $\Lambda(V_1)$ does not have a largest element. Thus the first simple admissible family associated to $W$ is
$$
\mathcal S^{(1)}=\{V_1\}\cup \{(\mu_{\alpha})_{\alpha\in C}\}
$$
and $\mathcal S$ is the union of $t>1$ simple admissible families.  Since $\omega$ is an immediate extension of $\nu$, we have by (\ref{eqMV3}) that
\begin{equation}\label{eqLK1}
p=\deg_xf=s^{\rm tot}(\mathcal S)=\prod_{j=2}^ts^{(j-1)}(\mathcal S).
\end{equation}

Let $\psi_{\alpha}=x-\phi_{\alpha}\in K$ for $\alpha>1$. We have
$$
\nu(\psi_{\sigma}-\psi_{\rho})=W(\phi_{\sigma}-\phi_{\rho})=W(\phi_{\rho})=\mu_{\rho}<\mu_{\sigma}=W(\phi_{\sigma})=\nu(\psi_{\tau}-\psi_{\sigma})
$$
for $\rho<\sigma<\tau$. Thus $\{\psi_{\alpha}\}$ is a pseudo-convergent set in $K$ in the sense of Kaplansky \cite{Ka}. 

Let $g(x)$ be a limit key polynomial for $\{\phi_{\alpha}\}$ (defined in Section \ref{SecV}). 
As explained in \cite[Section 3]{NS}, $g(x)$ is a polynomial of smallest degree such that 
$g(\psi_{\alpha})<g(\psi_{\beta})$ for $\alpha<\beta$. By \cite[Lemma 10]{Ka}, the degree of $g$ is a power of $p$. By (\ref{eqLK1}), $g$ has degree $p$, and so $f$ is a limit key polynomial for $\{\phi_{\alpha}\}$. 
Thus $\phi_1^{(2)}=f$ and so $\mathcal S^{(2)}=\{\mu_1^{(2)}\}$ where $\mu_1^{(2)}$ is the limit augmented value 
$$
\mu_1^{(2)}=[(\mu_{\alpha})_{\alpha\in B};\mu_1^{(2)}(f)=\infty].
$$
In summary, our admissible family of valuations $\mathcal S$ which determine $W$ is
$$
\mathcal S=\mathcal S^{(1)}\cup \mathcal S^{(2)}
$$
where $\mathcal S^{(1)}$ and $\mathcal S^{(2)}$ are as described above. 

We now consider the key polynomials $\phi_i$ and valuations $V_i$ constructed in our realization of MacLane's algorithm. Since
$$
\limsup W(\phi_i)=\frac{p^4}{p(p^4-1)}=\limsup \Lambda(V_1)
$$
and $\phi_i\in \Sigma(V_1)$ for $i>1$, we have by Proposition 1.9 \cite{V1} that the limit valuations
$V_{\infty}=\lim_{i\rightarrow\infty}V_i$ and $\lim_{\alpha\in B}\mu_{\alpha}$ are equal. Thus the pseudo valuation $W$ satisfies
$$
W(g)=\left\{
\begin{array}{ll} \infty&\mbox{ if }f|g\mbox{ in }K[x]\\
V_{\infty}(g)&\mbox{ if }f\not| g\mbox{ in }K[x].
\end{array}
\right.
$$





\section{A defectless extension of a rank two valuation with many jumps}\label{SecNEX}

In this section we construct the following example, which shows that the conclusions of Theorem \ref{Theorem4*} may not hold if $\nu$ has rank larger than one.

\begin{Example}\label{NEX} Let $\mathbf k$ be an algebraically closed field of characteristic not equal to 2, and let $\mathbf k[x,y]$ be a polynomial ring in two variables over $\mathbf k$. Let $K=\mathbf k(x,y)$ and let $\nu$ be the rank two valuation on $K$ defined by $\nu(x)=(0,1), \nu(y)=(1,0)\in (\ZZ_2)_{\rm lex}$ and $\nu |(\mathbf k\setminus 0)=0$. Let 
\begin{equation}\label{eqNEX10}
f=((z^2-x^2-x^3)^2-y^2(x^2+2x^3))^2-(y^6+y^7)
\end{equation}
and let $\omega$ be an extension of $\nu$ to $K[z]/(f(z))$. Let $W$ be the induced pseudo valuation of $K[z]$. 
Then  a family  $\mathcal S=\mathcal S^{(1)}\cup \cdots\cup S^{(t)}$  (with notation of Section \ref{SecV}) realizing $W$ has at least three jumps; that is, $t\ge 3$.
\end{Example}

We first establish that $f$ is irreducible in $K[z]$.  Setting $x=0$ in $f$, we obtain the reduction $\tilde f=z^8-(y^6+y^7)\in \mathbf k(y)[z]$. We have that 
$$
\tilde f=\prod_{j=0}^7(z-\tau^jy^{\frac{3}{4}}(1+y)^{\frac{1}{8}})
$$
over an algebraic closure of $\mathbf k(y)$, where $\tau$ is a primitive 8-th root of unity in $\mathbf k$. A unitary factor of $\tilde f$ of degree $r$ must have the constant term $\tau^s(y^{\frac{3}{4}}(1+y)^{\frac{1}{8}})^r$ for some $s\in \NN$. But $(y^{\frac{3}{4}}(1+y)^{\frac{1}{8}})^r\in \mathbf k(y)$ only if $r=8$, so $\tilde f$ is irreducible in $\mathbf k(y)[z]$, and thus $f$ is irreducible in $K[z]$.

 Henselization is discussed in Section \ref{Sec5}.

\begin{Lemma}\label{LemmaNEX1} 
The polynomial  $f$ factors into a product of linear unitary polynomials in $K^h[z]$, where $(K^h,\nu^h)$ is a Henselization of $(K,\nu)$.
\end{Lemma}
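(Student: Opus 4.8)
The plan is to show that $f$ splits into eight linear factors over $K^h[z]$ by exhibiting a root of $f$ in $K^h$ and then using the Galois/Henselian structure to get all of them. Since $f$ is unitary of degree $8$ and, by Lemma \ref{Lemma2}, the factorization of $f$ in $K^h[z]$ corresponds bijectively to the distinct extensions of $\nu$ to $L=K[z]/(f(z))$, it suffices to prove that $\nu$ has exactly eight distinct extensions to $L$, equivalently that every irreducible factor of $f$ in $K^h[z]$ has degree $1$.

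First I would exhibit the roots of $f$ explicitly over a separable closure. From the shape of $f$ in \eqref{eqNEX10}, a root $z$ satisfies $z^2 = x^2+x^3 \pm \bigl(y^2(x^2+2x^3) \pm \sqrt{y^6+y^7}\,\bigr)^{1/2}$, unwinding the three nested squares. So I would set $\xi = \sqrt{y^6+y^7} = y^3\sqrt{1+y}$, then $\eta_\pm = \sqrt{y^2(x^2+2x^3)\pm\xi}$, then $z_{\pm\pm} = \sqrt{x^2+x^3\pm\eta_\pm}$, giving the $8$ roots $\pm z_{\pm\pm}$. The key point is that each nested square root lies in $K^h$: because $K^h$ is Henselian, a square root of a unit times a square exists in $K^h$ (Hensel's lemma applied to $T^2-u$ when $u$ is a square times a $1$-unit and the residue characteristic is not $2$). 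Concretely, $1+y$ is a $1$-unit in $R_{\nu^h}$, so $\sqrt{1+y}\in K^h$; then $\xi = y^3\sqrt{1+y}\in K^h$ with $\nu^h(\xi)=(3,0)$. Next, $y^2(x^2+2x^3)+\xi$: I would compare values — $\nu(y^2 x^2)=(2,2)$ while $\nu(\xi)=(3,0)$, and in $(\ZZ^2)_{\rm lex}$ we have $(2,2)<(3,0)$, so $y^2(x^2+2x^3)\pm\xi = y^2x^2(1+2x) \cdot(\text{$1$-unit})$, which is $y^2x^2$ times a square times a $1$-unit (using $1+2x$ is a $1$-unit), hence has a square root $\eta_\pm = yx\sqrt{(1+2x)(1\pm\xi/(y^2x^2+2y^2x^3))}\in K^h$, with $\nu^h(\eta_\pm)=(1,1)$. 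Finally $x^2+x^3\pm\eta_\pm$: here $\nu(x^2)=(0,2)$ and $\nu(\eta_\pm)=(1,1)$, and $(0,2)<(1,1)$, so $x^2+x^3\pm\eta_\pm = x^2(1+x)\cdot(\text{$1$-unit})$ is again $x^2$ times a square times a $1$-unit, so $z_{\pm\pm}\in K^h$ with $\nu^h(z_{\pm\pm})=(0,1)$.

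Thus all eight roots $\pm z_{\pm\pm}$ lie in $K^h$; since they are the roots of $f$ and $f$ is separable (its reduction $\tilde f = z^8-(y^6+y^7)$ is separable over $\mathbf k(y)$ because $\mathrm{char}\,\mathbf k\neq 2$, or directly because $f$ was shown irreducible and separable earlier), $f$ factors as $\prod_{\pm,\pm,\pm}(z\mp z_{\pm\pm})$ into eight distinct linear unitary factors in $K^h[z]$. The coefficients of these factors are in $R_{\nu^h}$ by the second sentence of Lemma \ref{Lemma2}, since $f\in R_\nu[z]$.

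The main obstacle is bookkeeping with the lexicographic rank-two value group: one must repeatedly check that the ``dominant'' term in each sum $a\pm b$ is the one whose value is smaller in $(\ZZ^2)_{\rm lex}$, so that the sum is that term times a $1$-unit and one can extract a square root via Hensel. The comparisons $(2,2)<(3,0)$ and $(0,2)<(1,1)$ are exactly what makes the argument work, and they are the reason the first coordinate of $\nu(y)$ dominates; this is precisely the rank-two phenomenon that will be exploited later to force the three jumps. Once these inequalities are in hand the rest is routine, so I would state them carefully and then invoke Hensel's lemma for $T^2-u$ over $K^h$ at each of the three nesting levels.
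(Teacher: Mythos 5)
Your proof is correct and follows the same basic strategy as the paper: unwind the three nested square roots in (\ref{eqNEX10}) and extract each one by Hensel's lemma, using that the residue characteristic is not $2$; your lexicographic comparisons $(2,2)<(3,0)$ and $(0,2)<(1,1)$ are right, and your intermediate values agree with those the paper uses later ($W(z)=(0,1)$, $W(Q)=(1,1)$, $W(U)=(3,0)$). The execution differs in one respect worth noting. The paper extracts the square roots inside the Henselizations of the Noetherian local rings $A=\mathbf k[x,y]_{(x,y)}$ and $A_1=\mathbf k[x_1,y_1]_{(x_1,y_1)}$, and for that it needs the birational change of variables $x=x_1$, $y=x_1^2y_1$ to rewrite each radicand as a square times a unit of the local ring, finishing with Lemma \ref{Lemma3} to conclude $A_1^h\subset R_{\nu^h}$. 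You instead stay in the valued field $(K^h,\nu^h)$ and use the rank-two valuation to see directly that each radicand is a monomial square times a $1$-unit of $R_{\nu^h}$; this avoids the change of variables at the cost of invoking Hensel's lemma for the valuation ring $R_{\nu^h}$ itself. That invocation is legitimate, but since the paper defines a Henselian valued field via uniqueness of extensions rather than via root lifting, you should say explicitly why $T^2-u$ has a root for $u$ a $1$-unit: either quote the standard equivalence (e.g. \cite{E}), or note that $R_{\nu^h}=(R_\nu)^h$ is a Henselian local ring by Lemma \ref{Lemma3}, and lifting the simple factorization of $T^2-\overline u=(T-1)(T+1)$ is exactly the Henselian local ring property. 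Finally, the separability discussion and the count of eight distinct roots are not needed for the statement; what matters is only that every root of $f$ lies in $K^h$, which your nested construction (all sign choices) already gives, exactly as in the paper.
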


\begin{proof}
We will solve the equation $f(z)=0$ in $R_{\nu^h}$.
Let 
$$
Q=z^2-(x^2+x^3), U=Q^2-y^2(x^2+2x^3).
$$
 With these substitutions, the equation $f(z)=0$ becomes  $U^2=(y^6+y^7)$. Let $(1+y)^{\frac{1}{2}}$ be a square root of $1+y$  in the Henselization $A^h$ of $A=\mathbf k[x,y]_{(x,y)}$. Then $U=y^3(1+y)^{\frac{1}{2}}$ in $A^h$.  Thus we have that
$$
Q^2=y^3(1+y)^{\frac{1}{2}}+y^2(x^2+2x^3)
= y^2(x^2+2x^3+y(1+y)^{\frac{1}{2}}).
$$
Set $x=x_1$ and $y=x_1^2y_1$. We have that $x_1,y_1\in R_{\nu}$. Then
$$
Q^2=x_1^4y_1^2(x_1^2+2x_1^3+x_1^2y_1(1+x_1^2y_1)^{\frac{1}{2}})
= x_1^6y_1^2(1+2x_1+y_1(1+x_1^2y_1)^{\frac{1}{2}}).
$$
Let $(1+2x_1+y_1(1+x_1^2y_1)^{\frac{1}{2}})^{\frac{1}{2}}$ be a square root of $1+2x_1+y_1(1+x_1^2y_1)^{\frac{1}{2}}$ in the Henselization $A_1^h$ of $A_1=\mathbf k[x_1,y_1]_{(x_1,y_1)}$. Then
$$
Q=x_1^3y_1(1+2x_1+y_1(1+x_1^2y_1)^{\frac{1}{2}})^{\frac{1}{2}}
$$
in $A_1^h$. We now have that
$$
\begin{array}{lll}
z^2&=& Q+x^2+x^3\\
&=& x_1^3y_1(1+2x_1+y_1(1+x_1^2y_1)^{\frac{1}{2}})^{\frac{1}{2}}+x_1^2+x_1^3\\
&=& x_1^2(1+x_1+x_1y_1(1+2x_1+y_1(1+x_1^2y_1)^{\frac{1}{2}})^{\frac{1}{2}}).
\end{array}
$$

Let $(1+x_1+x_1y_1(1+2x_1+y_1(1+x_1^2y_1)^{\frac{1}{2}})^{\frac{1}{2}})^{\frac{1}{2}}$ be a square root of $$
1+x_1+x_1y_1(1+2x_1+y_1(1+x_1^2y_1)^{\frac{1}{2}})^{\frac{1}{2}}
$$
 in  $A_1^h$. Then
$$
z=x_1(1+x_1+x_1y_1(1+2x_1+y_1(1+x_1^2y_1)^{\frac{1}{2}})^{\frac{1}{2}})^{\frac{1}{2}}
\in A_1^h\subset R_{\nu^h}
$$
by Lemma \ref{Lemma3}.

Since all eight roots of $f(z)$ can be found this way, by making different choices of square roots, we have the desired factorization of $f(z)$ in $K^h[z]$ into a product of linear polynomials. 
\end{proof}

By Lemma \ref{Lemma2}, $\omega$ is the restriction to $K[z]/(f(z))$ of the extension of $\nu^h$ to a valuation $\omega^h$ of $K^h[z]/(\overline f)$ for some factor $\overline f$ of $f$ in $K^h[z]$.  Since $\overline f$ is a linear polynomial by Lemma \ref{LemmaNEX1}, we have that 
\begin{equation}\label{eqNEX20}
[G_{\omega}:G_{\nu}]=[G_{\omega_h}:G_{\nu^h}]=\deg_z\overline f=1
\end{equation}
 by (\ref{eqMV3}). 
 
We will require the following remark.

\begin{Remark}\label{RemarkNEX} An element $g\in \mathbf k(z)$ is a square of an element of $\mathbf k(z)$ if and only if 
all zeros and poles of $g(z)$ in $\AA^1_{\mathbf k}$ have even order.
\end{Remark}

The remark follows since every element $g(z)$ of $\mathbf k(z)$  has a unique factorization
$$
g(z)=c(z-a_1)^{n_1}\cdots (z-a_t)^{n_t}
$$
with $c\in \mathbf k$, $a_1,\ldots,a_t$ distinct elements of $\mathbf k$  and $n_1,\ldots,n_t$ nonzero integers.

We now turn to the construction of the family $\mathcal S$. We will use the notation of Section \ref{SecV}.
To begin with, we observe that the total jump $s^{\rm tot}(\mathcal S)$ of $\mathcal S$ satisfies
\begin{equation}\label{eqNEX21}
s^{\rm tot}(\mathcal S)=\deg_zf(z)=8
\end{equation}
by  (\ref{eqMV3}) and (\ref{eqNEX20}).

Let $V_0=\nu$. Since $W(f(z))=\infty$, we have that $W(z)=(0,1)$ and so the first approximant is $V_1=[V_0;V_1(z)=(0,1)]$. 
As above, let  $Q=z^2-(x^2+x^3)$. Since $W(f(z))=\infty$, we have that $W(Q)=(1,1)$. Let $\sum _{i=1}^{\infty}\alpha_ix^i$ with $\alpha_i\in k$ be a square root of $x^2+x^3=x^2(1+x)$ in $k[[x]]$. Let $\overline z=z-(\alpha_1x+\cdots+\alpha_nx^n)$ for some $n\in \ZZ_+$. Then
$$
\begin{array}{lll}
Q&=& (\overline z +\alpha_1x+\cdots+\alpha_nx^n)^2-(x^2+x^3)\\
&=& \overline z^2+2(\alpha_1x+\cdots+\alpha_nx^n)\overline z+(\alpha_1x+\cdots+\alpha_nx^n)^2-(x^2+x^3)
\end{array}
$$
so that $W(\overline z(\overline z+2(\alpha_1x+\cdots+\alpha_nx^n))> (0,n)$.
Thus 
\begin{equation}\label{eqNEX4}
W(z-\alpha_1x-\cdots-\a_nx^n)> (0,\frac{n}{2})\mbox{ or }
W(z+\alpha_1x+\cdots+\a_nx^n)> (0,\frac{n}{2}).
\end{equation}
Thus $d(V_1)=1$ and so
$$
\Sigma(V_1)=\{\phi=z+h\mid h\in K\mbox{ and }V_1(\phi)<W(\phi)\}.
$$
We will show that
\begin{equation}\label{eqNEX1}
\Lambda(V_1)=\{W(\phi)\mid \phi\in \Sigma(V_1)\}\subset \{0\}\times \ZZ_+.
\end{equation}

We now prove equation (\ref{eqNEX1}). Suppose there exists $h\in K$ such that setting $\phi=z+h$, we have that $W(\phi)\ge (1,0)$.  Then
\begin{equation}\label{eqNEX2}
W(h)=(0,1).
\end{equation}
Substituting into $Q$,  we have that
$Q=\phi^2-2h\phi + h^2-(x^2+x^3)$.
Now $W(Q)=(1,1)$ implies
\begin{equation}\label{eqNEX3}
W(h^2-(x^2+x^3))\ge (1,0).
\end{equation}
By (\ref{eqNEX2}), we have an expression
$$
h=\frac{\alpha_0(x)+y\Omega_1}{\beta_0(x)+y\Omega_2}
$$
with $\alpha(x),\beta(x)\in \mathbf k[x]$ nonzero and $\Omega_1,\Omega_2\in \mathbf k[x,y]$. Now substituting into (\ref{eqNEX3}), we have that
$$
W((\alpha_0(x)+y\Omega_1)^2+(x^2+x^3)(\beta_0(x)+y\Omega_2)^2)\ge (1,0)
$$
which implies
$$
W\left(\left(\frac{\alpha_0(x)}{\beta_0(x)}\right)^2-(x^2+x^3)\right)\ge (1,0)
$$
so that
$$
\left(\frac{\alpha_0(x)}{\beta_0(x)}\right)^2=x^2+x^3,
$$ 
a contradiction by Remark \ref{RemarkNEX}. Thus (\ref{eqNEX1}) holds. 

Let
$$
\mathcal A=\{\mu_{\alpha}=[V_1;\mu_{\alpha}(\phi_{\alpha})=W(\phi_{\alpha})\mid \phi_{\alpha}\in \Sigma(V_1)\}.
$$

By (\ref{eqNEX4}) and (\ref{eqNEX1}), we have that $(1,0)$ is the least upper bound of $\Lambda(\mathcal A)$ in $(\ZZ^2)_{\rm lex}$ but $(1,0)\not\in \Lambda(\mathcal A)$.  Thus $\mathcal A$ does not have a maximal element.

Suppose that $\mu_{\alpha}\in \mathcal A$.  Then $\mu_{\alpha}=[V_1;\mu_{\alpha}(\phi_{\alpha})=W(\phi_{\alpha}]$ with $\phi_{\alpha}=z+h$ for some $h\in K$. Expand
$$
Q=\phi_{\alpha}^2-2h\phi_{\alpha}+(h^2-(x^2+x^3)),
$$
so that
$$
\mu_{\alpha}(Q)\le 2\mu_{\alpha}(\phi_{\alpha}),
$$
and $\mu_{\alpha}(Q)<(1,0)$ by (\ref{eqNEX1}). Thus $Q\in \tilde\Sigma(\mathcal A)$, and since $Q$ has the smallest possible degree that a polynomial  in $\tilde\Sigma(\mathcal A)$ can have (it must have degree greater than $1=d(V_1)$) we have that $d(\mathcal A)=2$ and $Q\in \Sigma(\mathcal A)$, and so  $Q$ is a limit key polynomial for $\mathcal A$. Let $V_2=[\mathcal A;V_2(Q)=(1,1)]$. Then the first simple admissible family in $\mathcal S$ is $\mathcal S^{(1)}=\{V_1\}\cup \{\mathcal A\}$, and the second admissible family $\mathcal S^{(2)}$ begins with $V_2$. Thus the first jump in $\mathcal S$ is 
$$
s^{(1)}(\mathcal S)=\frac{\deg_zQ}{\deg_zz}=2.
$$
We have that $f=(Q^2-y^2(x^2+2x^3))^2-(y^6+y^7)$. Let 
$$
U=Q^2-y^2(x^2+2x^3)
$$
as above.
 We have that $W(U)=(3,0)$  since $W(f(z))=\infty$. Let $\sum_{i=1}^{\infty}\beta_ix^i$ with $\beta_i\in \mathbf k$ be a square root of $x^2+2x^3=x^2(1+x)$ in $\mathbf k[[x]]$. For $n\in \ZZ_+$, let  $\overline Q=Q-y(\beta_1 x+\cdots+\beta_nx^n)$. Then 
$$
U=\overline Q^2+2y(\beta_1x+\cdots+\beta_nx^n)\overline Q+y^2(\beta_1x+\cdots+\beta_nx^n)^2-y^2(x^2+2x^3),
$$
so that
$$
W(\overline Q(\overline Q+2y(\beta_1x+\cdots+\beta_nx^n)))> (2,n).
$$
Thus
\begin{equation}\label{eqNEX5}
W(Q-y(\beta_1x+\cdots+\beta_nx^n))> (1,\frac{n}{2})\mbox{ or }
W(Q+y(\beta_1x+\cdots+\beta_nx^n))> (1,\frac{n}{2}).
\end{equation}
Thus $d(V_2)=2$ and so
$$
\Sigma(V_2)=\{\phi=Q+Az+B\mid A,B\in K\mbox{ and }V_2(\phi)<W(\phi)\}.
$$
We will  show that 
\begin{equation}\label{eqNEX6}
\Lambda(V_2)=\{W(\phi)\mid \phi\in \Sigma(V_2)\}\subset \{1\}\times\ZZ_+.
\end{equation}
We now prove equation (\ref{eqNEX6}). Suppose there exist $A.B\in K$ such that setting 
$$
\phi=Q+Az+B,
$$
 we have that $W(\phi)\ge (2,0)$. We have that $W(Q)=W(Az+B)$. Expand
$$
U=\phi^2-2(Az+B)\phi+(Az+B)^2-y^2(x^2+2x^3).
$$
Now $W(\phi^2)\ge (4,0)$ and $W((Az+B)\phi)>(3,0)$. Since $W(U)=(3,0)$, we have that
$$
W((Az+B)^2-y^2(x^2+2x^3))\ge (3,0).
$$
Thus
$$
(1,1)=W((Az+B))=\min\{W(A)+(0,1),W(B)\}.
$$
We can thus write
$$
A=y\left(\frac{\alpha_0(x)+y\Omega_1}{\gamma_0(x)+y\Omega_3}\right),
B=y\left(\frac{\beta_0(x)+y\Omega_2}{\gamma_0(x)+y\Omega_3}\right)
$$
with $\Omega_1,\Omega_2,\Omega_3\in \mathbf k[x,y]$, $\gamma_0(x)\ne 0$ and at least one of $\alpha_0(x),\beta_0(x)\ne 0$. Thus
$$
W([(\alpha_0(x)+y\Omega_1)z+(\beta_0(x)+y\Omega_2)]^2-(\gamma_0(x)+y\Omega_3)^2(x^2+2x^3))
\ge (1,0),
$$
and so 
$$
\begin{array}{lll}
(1,0)&\le& W((\alpha_0(x)z+\beta_0(x))^2-\gamma_0(x)^2(x^2+2x^3))\\
&=&W(\alpha_0(x)^2z^2+2\alpha_0(x)\beta_0(x)z+\beta_0(x)^2-\gamma_0(x)^2(x^2+2x^3))\\
&=& W(\alpha_0(x)^2Q+2\alpha_0(x)\beta_0(x)z+(\alpha_0(x)^2(x^2+x^3)+\beta_0(x)^2-\gamma_0(x)^2(x^2+2x^3))).
\end{array}
$$
Thus 
$$
W(2\alpha_0(x)\beta_0(x)z+(\alpha_0(x)^2(x^2+x^3)+\beta_0(x)^2-\gamma_0(x)^2(x^2+2x^3)))\ge (1,0).
$$
But this implies that
\begin{equation}\label{eqNEX7}
\alpha_0(x)\beta_0(x)=0
\end{equation}
by (\ref{eqNEX1}) and thus
\begin{equation}\label{eqNEX8}
\alpha_0(x)^2(x^2+x^3)+\beta_0(x)^2-\gamma_0(x)^2(x^2+2x^3)=0.
\end{equation}
We have that $\alpha_0(x)=0$ or $\beta_0(x)=0$ by (\ref{eqNEX7}).  If $\alpha_0(x)=0$, then (\ref{eqNEX8}) becomes
$$
\left(\frac{\beta_0(x)}{\gamma_0(x)}\right)^2=x^2+2x^3
$$
which is not a square in $\mathbf k(x)$ by Remark \ref{RemarkNEX}, giving a contradiction.
If $\beta_0(x)=0$, then (\ref{eqNEX8}) becomes 
$$
\left(\frac{\alpha_0(x)}{\gamma_0(x)}\right)^2=\frac{x+2}{x+1},
$$
again giving a contradiction by Remark \ref{RemarkNEX}. Thus (\ref{eqNEX6}) holds.

Set 
$$
\mathcal B=\{\nu_{\beta}=[V_2;\nu_{\beta}(\phi_{\beta})=W(\phi_{\beta})]\mid \phi_{\beta}\in \Sigma(V_2)\}.
$$
Suppose $\nu_{\beta}\in \mathcal B$. Then $\nu_{\beta}=[V_2;\nu_{\beta}(\phi_{\beta})=W(\phi_{\beta})]$ with $\phi_{\beta}=Q+Az+B$ for some $A,B\in K$. Expand
$$
U=\phi_{\beta}^2-2(Az+B)\phi_{\beta}+(Az+B)^2-y^2(x^2+2x^3)
$$
to see that $\nu_{\beta}(U)\le 2\nu_{\beta}(\phi_{\beta})$, and thus 
$\nu_{\beta}(U)<(3,0)$ by (\ref{eqNEX6}). Thus $U\in \tilde\Sigma(\mathcal B)$. We thus have that $d(\mathcal B)=4$ or $d(\mathcal B)=3$.

Let $\psi\in \Sigma(\mathcal B)$, and define $V_3=[\mathcal B;V_3(\psi)=W(\psi)]$.  Then the second admissible family in $\mathcal S$ begins with $V_3$. Thus the second jump is
$$
s^{(2)}(\mathcal S)=\frac{\deg_z\psi}{\deg_zQ}=\frac{3}{2}\mbox{ or }2.
$$
Thus 
$$
s^{(1)}(\mathcal S)s^{(2)}(\mathcal S)\le 4<8=s^{\rm tot}(\mathcal S)
$$
so there must be at least one more jump in the construction of $\mathcal S$ so that $t\ge 3$.

\section{Extensions of associated graded rings and semigroups}\label{Sec9} 

We will consider in this section the conditions of finite generation of extensions of associated graded rings along a valuation and relative finite generation of extensions of valuation semigroups.

In this section, we will have the following assumptions. 
 Suppose that $A$ is a Noetherian local domain which contains an algebraically closed field $\mathbf k$ such that $A/m_A\cong \mathbf k$. Let $K$ be the quotient field of $A$ and suppose that $\nu$ is a rank 1 valuation of $K$ which dominates $A$, such that the residue field of the valuation ring of $\nu$ is $\mathbf k$.
 
 Suppose that $S$ is a sub semigroup of a semigroup $T$. We say that $T$ is a finitely generated module over $S$ if there exists a finite number of elements $t_1,\ldots,t_r$ of $T$ such that 
 $$
 T=(t_1+S)\cup \cdots \cup (t_r+S).
 $$
 With our assumptions, ${\rm gr}_{\nu}(A)$ is isomorphic to the semigroup algebra $\mathbf k[t^{S^A(\nu)}]$. Thus if $A\rightarrow B$ is an inclusion of domains and $\omega$ is an extension of $\nu$ to the quotient field of $B$ which is nonnegative on  $B$ such that the residue field of $\omega$ is $\mathbf k$, then ${\rm gr}_{\omega}(B)$ is a finitely generated ${\rm gr}_{\nu}(A)$-module if and only if $S^B(\omega)$ is a finitely generated module over $S^A(\nu)$.

 We have the following  immediate corollary  of Theorem \ref{Theorem1}.

\begin{Corollary}\label{Cor1}   Suppose that 
$f(z)\in A[z]$ is unitary  and irreducible and there is a unique extension of $\nu$ to a valuation $\omega$ of $K[z]/(f(z))$ and  the characteristic $p$ of $\mathbf k$ does not divide $\deg_zf(z)$.  Then
${\rm gr}_{\omega}(A[z]/(f(z)))$ is a finitely generated ${\rm gr}_{\nu}(A)$-module and $S^{A[z]/(f(z))}(\omega)$ is a finitely generated module over the semigroup $S^{A}(\nu)$.
 \end{Corollary}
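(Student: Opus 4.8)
The plan is to deduce both assertions directly from Theorem \ref{Theorem1} together with the semigroup-algebra dictionary recorded at the beginning of this section.

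First, observe that the hypotheses of Corollary \ref{Cor1} are precisely those of Theorem \ref{Theorem1} with $V_0=\nu$: $A$ is a Noetherian local domain containing the algebraically closed field $\mathbf k=A/m_A$, $\nu$ dominates $A$ with residue field $\mathbf k$, $f(z)\in A[z]$ is unitary and irreducible, $\nu$ has a unique extension $\omega$ to $K[z]/(f(z))$, and $p$ does not divide $\deg_zf$. Hence Theorem \ref{Theorem1} applies and produces a positive integer $k$ and homogeneous elements $\overline\phi_1,\ldots,\overline\phi_{k-1}$ with
$$
{\rm gr}_{\omega}(A[z]/(f(z)))\cong {\rm gr}_{\nu}(A)[\overline\phi_1,\ldots,\overline\phi_{k-1}]/I
$$
finitely generated and presented as a ${\rm gr}_{\nu}(A)$-module. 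This is the first assertion.

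For the semigroup assertion, I would put $B=A[z]/(f(z))$ and check that the pair $(B,\omega)$ satisfies the hypotheses of the translation stated at the beginning of this section. Since $f$ is irreducible, $B$ is a domain with ${\rm QF}(B)=L=K[z]/(f(z))$ (and $B$ is a free $A$-module of rank $\deg_zf$, hence Noetherian). Because $f$ is unitary with coefficients in $A\subset R_{\nu}$, the image of $z$ in $L$ is integral over $R_{\nu}$, so $\omega(z)\ge 0$ and therefore $\omega$ is nonnegative on $B$. The residue field $R_{\omega}/m_{\omega}$ is a finite extension of $R_{\nu}/m_{\nu}=\mathbf k$, hence equals $\mathbf k$ since $\mathbf k$ is algebraically closed. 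Thus the dictionary applies: ${\rm gr}_{\omega}(B)$ is a finitely generated ${\rm gr}_{\nu}(A)$-module if and only if $S^{B}(\omega)$ is a finitely generated module over $S^{A}(\nu)$. The first condition has just been established, so the second follows.

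There is no genuine obstacle here; the only point requiring a little care is verifying the three conditions ($B$ a domain, $\omega$ nonnegative on $B$, residue field $\mathbf k$) needed to invoke the equivalence between finite generation of the extension of associated graded rings and finite generation of the extension of value semigroups. Everything substantive is already contained in Theorem \ref{Theorem1}.
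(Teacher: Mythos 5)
Your proposal is correct and follows exactly the paper's route: the corollary is obtained by applying Theorem \ref{Theorem1} and then invoking the equivalence, recorded at the start of Section \ref{Sec9}, between finite generation of ${\rm gr}_{\omega}(B)$ as a ${\rm gr}_{\nu}(A)$-module and finite generation of $S^{B}(\omega)$ as a module over $S^{A}(\nu)$. Your explicit verification that $B=A[z]/(f(z))$ is a domain, that $\omega$ is nonnegative on $B$, and that the residue field of $\omega$ is $\mathbf k$ is a harmless (and welcome) elaboration of what the paper leaves implicit in calling this an immediate corollary.
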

 
 The following corollary addresses the case when the extension of valuations is not unique. It is an immediate corollary of Theorem \ref{Theorem4*}.
 
 \begin{Corollary}\label{Cor2}   
Further suppose that $A$ is a Nagata ring.  Suppose that 
$f(z)\in A[z]$ is unitary, irreducible and separable and $\omega$ is  a valuation  of $K[z]/(f(z))$  which extends 
$\nu$ and  there is no  defect in the extension ($\delta(\omega/\nu)=1$).
Then  there exists a birational extension $A_1$ of $A$ which is dominated by $\nu$ such that
 ${\rm gr}_{\omega}(A_1[z]/(f(z)))$ is a finitely generated ${\rm gr}_{\nu}(A_1)$-module and $S^{A_1[z]/(f(z))}(\omega)$ is a finitely generated module over the semigroup $S^{A_1}(\nu)$.
\end{Corollary}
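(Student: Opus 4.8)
The plan is to deduce the corollary directly from Theorem \ref{Theorem4*} and the dictionary between associated graded rings and value semigroups recalled at the beginning of this section.

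First I would apply Theorem \ref{Theorem4*}. Since $\delta(\omega/\nu)=1$ and $f(z)$ is separable, that theorem produces a normal birational extension $A_1$ of $A$, dominated by $\nu$, together with a positive integer $k$ and an isomorphism ${\rm gr}_{\omega}(A_1[z]/(f(z)))\cong {\rm gr}_{\nu}(A_1)[\overline\phi_1,\ldots,\overline\phi_k]/I$, where $I$ is the explicit ideal $(\overline\phi_1^{n_1}-\overline c_1,\ldots)$ of that statement, and this quotient is a finitely generated and presented ${\rm gr}_{\nu}(A_1)$-module. This is already the first assertion of the corollary, so no additional argument is required for it.

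For the semigroup statement I would verify that the hypotheses of the dictionary at the start of Section \ref{Sec9} hold with $B=A_1[z]/(f(z))$. Three points need checking: (i) $A_1$ is a Noetherian local domain containing $\mathbf k$ with $A_1/m_{A_1}\cong\mathbf k$, which holds because $A_1$ is a birational (hence essentially finite type, Noetherian) extension of $A$, is dominated by $\nu$, and $\mathbf k\subseteq A_1\subseteq R_{\nu}$ with $\mathbf k$ algebraically closed and $R_{\nu}/m_{\nu}=\mathbf k$; (ii) $\omega$ is nonnegative on $B$, which holds since $f$ is unitary with coefficients in $A_1\subseteq R_{\nu}$, so the class of $z$ is integral over $R_{\nu}$ and thus has nonnegative $\omega$-value; (iii) the residue field of $\omega$ is $\mathbf k$, which I would get from $\delta(\omega/\nu)=1$: when the realization of the algorithm terminates one has $\deg_zf=n_1\cdots n_{k-1}=[G_{\omega}:G_{V_0}]=e(\omega/\nu)$, and $[L:K]=e(\omega/\nu)f(\omega/\nu)\delta(\omega/\nu)$ by (\ref{eqN300}) then forces $f(\omega/\nu)=1$; when the realization is infinite one uses instead $\deg_z\overline f=[G_{\omega}:G_{\nu}]=e(\omega/\nu)$ together with the defect formula (\ref{eqefd}) in the Henselization, again giving $f(\omega/\nu)=1$. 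Hence $R_{\omega}/m_{\omega}=\mathbf k$, so the degree-zero part of ${\rm gr}_{\omega}(B)$ is $\mathbf k$, every nonzero homogeneous component of ${\rm gr}_{\omega}(B)$ is one-dimensional over $\mathbf k$, and ${\rm gr}_{\omega}(B)\cong\mathbf k[t^{S^B(\omega)}]$, while ${\rm gr}_{\nu}(A_1)\cong\mathbf k[t^{S^{A_1}(\nu)}]$; finite generation of ${\rm gr}_{\omega}(B)$ as a ${\rm gr}_{\nu}(A_1)$-module then translates verbatim into finite generation of $S^B(\omega)$ as a module over $S^{A_1}(\nu)$.

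The only real obstacle is the bookkeeping in point (iii): making sure that triviality of the defect genuinely pins down $f(\omega/\nu)=1$, and handling uniformly the terminating case and the limit-valuation case of Theorem \ref{Theorem4*}. Everything else is a formal consequence of that theorem and of the semigroup dictionary of Section \ref{Sec9}.
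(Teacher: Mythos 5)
Your proposal is correct and takes essentially the same route as the paper, which derives Corollary \ref{Cor2} as an immediate consequence of Theorem \ref{Theorem4*} together with the semigroup-algebra dictionary stated at the beginning of Section \ref{Sec9}. Your additional verification that $\delta(\omega/\nu)=1$ forces $f(\omega/\nu)=1$, so that $R_{\omega}/m_{\omega}=\mathbf k$ and ${\rm gr}_{\omega}(A_1[z]/(f(z)))\cong\mathbf k[t^{S^{A_1[z]/(f(z))}(\omega)}]$, correctly fills in a step the paper leaves implicit.
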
 

If we remove any of the assumptions of Corollary \ref{Cor1},  then the conclusions of the corollary are false, as is shown in the following three examples.
We consider finite extensions $A\rightarrow B$ where $A$ and $B$ are excellent, $B$ is a domain with quotient field $L$ and $\omega$ is an extension of $\nu$ to $L$ which dominates $B$.

\begin{Example}\label{IEx1} There exists a finite extension $A\rightarrow B$ such that $\omega$ is the unique extension of $\nu$ to $L={\rm QF}(B)$, $p$ does not divide $[L:K]$ but
${\rm gr}_{\omega}(B)$ is not a finitely generated ${\rm gr}_{\nu}(A)$-module and $S^{B}(\omega)$ is  not a finitely generated module over the semigroup $S^{A}(\nu)$.\end{Example}
 
 In particular, the representation of $B$ as a ``hypersurface singularity'' over $A$ is essential to the conclusions of Theorem \ref{Theorem1} and Corollary \ref{Cor1}.

\begin{Example}\label{IEx2} There exists an extension $A\rightarrow B=A[z]/(f(z))$ where $f(z)$ is unitary and irreducible, such that $p$ does not divide $\deg_zf(z)$ but the extension $\omega$ of $\nu$ to a valuation of $L={\rm QF}(B)$ is  not unique such that 
${\rm gr}_{\omega}(B)$ is not a finitely generated ${\rm gr}_{\nu}(A)$-module and $S^B(\omega)$ is a not a finitely generated module over the semigroup $S^A(\nu)$.
\end{Example}

Example \ref{IEx2} shows that the condition that $\omega$ is the unique extension of $\nu$ is necessary in Theorem \ref{Theorem1} and Corollary \ref{Cor1}, and that the birational extension $A\rightarrow A_1$ in the conclusions of  Corollary \ref{Cor2} is necessary.

\begin{Example}\label{IEx3} There exists an extension $A\rightarrow B=A[z]/(f(z))$ where $f(z)$ is unitary and irreducible, such that  the extension $\omega$ of $\nu$ to a valuation of $L={\rm QF}(B)$ is  unique but $p$ divides $\deg_zf(z)$ such that ${\rm gr}_{\omega}(B)$ is not a finitely generated ${\rm gr}_{\nu}(A)$-module and $S^B(\omega)$ is not a finitely generated module over $S^A(\nu)$.  In the example, $\delta(\omega/\nu)=1$.
\end{Example}  

Example \ref{IEx3} shows that the condition that $p\not|\deg_zf(z)$ is necessary in  Corollary \ref{Cor1}.

In the remainder of this section, we will construct these three examples.

Examples \ref{IEx1} and \ref{IEx2} will be obtained from Example 9.3 of \cite{CV1}.
In \cite[Example 9.3]{CV1}, $\mathbf k$ is an arbitrary field. We will make the further restriction that $\mathbf k$ is an algebraically closed field of characteristic $p>2$.  Let $T=\mathbf k[x,y]_{(x,y)}$, a localization of a polynomial ring in two variables, and $R$ be the subring $R=\mathbf k[x^2,xy,y^2]_{(x^2,xy,y^2)}$. Let $\omega$ be the rational rank 1 valuation dominating $T$ which is determined by the generating sequence
$$
P_0=x,P_1=y,P_2=y^3-x^5
$$
and
$$
P_{i+1}=P_i^3-x^{a_i}P_{i-1}\mbox{ for $i\ge 2$}
$$
where $a_i$ is even, and chosen so that $S^{T}(\omega)$ is not a finitely generated module over $S^{R}(\nu)$,
where $\nu$ is the restriction of $\omega$ to the quotient field $M$ of $R$. Let $N$ be the quotient field of $T$. 

Since the characteristic of $\mathbf k$ is not equal to 2, $N$ is Galois over $M$, and the Galois group is generated by the involution $\sigma$ defined by $\sigma(x)=-x$ and $\sigma(y)=-y$. Given $0\ne g\in T$, we expand
$$
g=\sum \alpha_{i_0,i_1,\ldots,i_r}P_0^{i_0}P_1^{i_1}\cdots P_r^{i_r}
$$
with $\alpha_{i_0,i_1,\ldots,i_r}\in \mathbf k$, $i_0\in \NN$ and $0\le i_j<3$ for $1\le j$, so that 
$$
\omega(g)=\min\{i_0\omega(P_0)+i_1\omega(P_1)+\cdots+i_r\omega(P_r)\mid \alpha_{i_0,i_1,\ldots,i_r}\ne 0\}.
$$
Then 
$$
\sigma(g)=\sum \alpha_{i_0,i_1,\ldots,i_r}(-1)^{i_0+i_1+\cdots+i_r}P_0^{i_0}P_1^{i_1}\cdots P_r^{i_r}
$$
and thus $\omega(\sigma(g))=\omega(g)$.  Since the extensions of a valuation in a finite Galois extension are conjugate (\cite[Corollary 3 to Theorem 12, page 66]{ZS2}), we have that $\omega$ is the unique extension of $\nu$ to $N$.

We now give a direct verification  that $T$ is not isomorphic to $R[z]/(f(z))$ for some $f(z)\in R[z]$. This follows since for a maximal ideal $m$ in $R[z]/(f(z))$, we have that 
$$
\dim_{\mathbf k}m/m^2\ge 3>2=\dim_{\mathbf k}m_T/m_T^2.
$$
We thus have that $R\rightarrow T$ gives Example \ref{IEx1}.

In \cite[Example 9.4]{CV1}, it is shown that in the natural extension $S\rightarrow T$, where $S=\mathbf k[u,v]_{(u,v)}$
and $u=x^2, v=y^2$, with valuation $\mu$ obtained by restricting $\omega$ to the quotient field of $S$, that $S^T(\omega)$ is not a finitely generated $S^{S}(\mu)$-module.  Now we have a factorization of our extension
$S\rightarrow U\rightarrow T$ where  $U=\mathbf k[x,v]_{(x,v)}$. Now $U\cong S[z]/(z^2-u)$ and $T\cong U[z]/(z^2-v)$. 
Let $\tau$ be the restriction of $\omega$ to the quotient field $L$ of $U$.

Now we must have that $S^{U}(\tau)$ is not a finitely generated $S^{S}(\mu)$-module or $S^{T}(\omega)$ is not a finitely generated $S^{U}(\tau)$-module since $S^{T}(\omega)$ is not a finitely generated $S^{S}(\mu)$-module.

We necessarily have by Corollary \ref{Cor1} that either $\tau$ is not the unique extension of $\mu$ to $L$ or $\omega$ is not the unique extension of $\tau$ to $N$, giving Example \ref{IEx2}.

In \cite{D}, a general theory of eigenfunctions for a valuation is developed for two dimensional quotient singularities, and a complete characterization is given of when the resulting extension of associated graded rings along the valuation is finite. 

We now construct Example \ref{IEx3}. Let $A=\mathbf k[u,v]_{(u,v)}$ with quotient field $K$ and let $\nu$ be the valuation of $K$  which dominates $A$ constructed in \cite[Theorem 7.38]{CP} and analyzed in Section \ref{Sec8}. Let $f(x)=x^p+ux^{p-1}-u$. It is shown in Theorem 7.38 \cite{CP} that there is a unique extension of $\nu$ to a valuation $\omega$ of $L=K[x]/(f(x))$. The extension is immediate, with defect $\delta(\omega/\nu)=p$. 
Let $B=A[x]/(f(x))$.  

We see from the generating sequence $P_0,\ldots,P_i,\ldots$ recalled in the beginning of Section \ref{Sec8} that
${\rm gr}_{\nu}(A)\cong \mathbf k[\overline P_0,\overline P_1,\ldots]/I$ where
$$
I=(\overline P_1^{p^2}-\overline P_0,\overline P_{i}^{p^2}-\overline P_0^{p^{2i-2}}\overline P_{i-1}\mbox{ for }i\ge 2).
$$
It is shown in formulas (35) and (36) of \cite{C2} that
$$
U_0=x,U_1=v,U_2=v^p-x
$$
and for $j\ge 2$,
$$
U_{j+1}=U_j^p-x^{p^{2j-2}}U_{j-1}\mbox{ if $j$ is odd},
$$
$$
U_{j+1}=U_j^{p^3}-x^{p^{2j-1}}U_{j-1}\mbox{ if $j$ is even}
$$
is a generating sequence for $\omega$ in $B$.  Thus 
${\rm gr}_{\nu}(B)\cong \mathbf k[\overline U_0,\overline U_1,\ldots]/J$ where
$$
J=(\overline U_1^{p}-\overline U_0,\overline U_i^p-\overline U_0^{p^{2i-2}}\overline U_{i-1}\mbox{ for $i\ge 2$ odd},
\overline U_{i}^{p^3}-\overline U_0^{p^{2i-1}}\overline U_{i-1}\mbox{ for $i\ge 2$ even}).
$$
Thus
$\overline U_n^p=\overline P_n$ if $n$ is even and $\overline U_n=\overline P_n$ if $n$ is odd, and so 
${\rm gr}_{\omega}(B)$ is not a finitely generated ${\rm gr}_{\nu}(A)$-module and $S^{\omega}(B)$ is not a finitely generated  $S^{\nu}(A)$-module. 


\end{document}